\documentclass[11pt]{amsart}
\newcommand{\deleted}[1]{}
\newcommand{\delete}[1]{}
\newcommand{\mynotes}[1]{}
\newcommand\notes[1]{}
\usepackage{txfonts}
\usepackage{amssymb}
\usepackage{eucal}
\usepackage{graphicx}
\usepackage{amsmath}
\usepackage{amscd}
\usepackage[all]{xy}
\usepackage[bookmarksnumbered=true,
            bookmarksopen=true,
            colorlinks,
            pdfborder=001,
            linkcolor=blue,
            anchorcolor=blue,
            citecolor=blue
            ]{hyperref}

\usepackage{amsthm}
\usepackage{mathrsfs}
\usepackage{enumitem}
\usepackage{amsfonts,latexsym}
\usepackage{xspace}
\usepackage{epsfig}
\usepackage{float}
\usepackage{color}
\usepackage{fancybox}
\usepackage{colordvi}
\usepackage{multicol}
\usepackage{tikz}
\usepackage{wasysym}
\usepackage{xspace}
\usepackage[active]{srcltx} 
\usepackage{mathtools} 
\usepackage{tabularx}
\usepackage{stfloats}
\usepackage{makecell}
\usepackage{caption}
\newcommand\changed[1]{#1}
\changed{}

\newtheorem{theorem}{Theorem}[section]
\newtheorem{lemma}[theorem]{Lemma}

\newtheorem{question}[theorem]{Question}

\newtheorem{prop}[theorem]{Proposition}

\theoremstyle{definition}
\newtheorem{defn}[theorem]{Definition}

\newtheorem{exam}[theorem]{Example}

\deleted{\newtheorem{theorem}{Theorem}[section]
\newtheorem{prop-def}{Proposition-Definition}[section]
\newtheorem{coro-def}{Corollary-Definition}[section]

}
\newcommand{\nc}{\newcommand}

\newcommand{\efootnote}[1]{}

\renewcommand\geq{\geqslant}
\renewcommand\leq{\leqslant}

\newcommand{\lra}{\longrightarrow}

\newcommand{\lraf}[1]{\stackrel{#1}{\lra}}

\newcommand{\raf}[1]{\stackrel{#1}{\ra}}

\newcommand{\ra}{\rightarrow}
\newcommand{\lan}{\langle}
\newcommand{\ran}{\rangle}

\newcommand{\cpx}[1]{#1^{\bullet}}

\newcommand{\Db}[1]{{\mathscr D}^b(#1)}
\newcommand{\Kb}[1]{{\mathscr K}^b(#1\mbox{{\rm -proj}})}
\newcommand{\Ka}{{\mathscr K}^{-, b}(A\mbox{{\rm -proj}})}
\newcommand{\KL}{{\mathscr K}^{-, b}(\Lambda\mbox{{\rm -proj}})}
\newcommand{\Kbb}[1]{{\mathscr K}^{-, b}(\C)}
\newcommand{\K}{{\mathscr K}}
\newcommand{\Kc}{{\mathscr K}(\C)}
\newcommand{\Cc}{{\mathscr C}(\C)}

\newcommand{\md}{\ensuremath{\mbox{{\rm -mod}}}}

\nc{\mrm}[1]{{\rm #1}}

\DeclareMathOperator{\Zmap}{\mathcal{Z}}

\nc{\Hom}{\mrm{Hom}}
\nc{\Ext}{\mrm{Ext}}
\nc{\End}{\mrm{End}}
\nc{\rad}{\mrm{rad}}
\nc{\Aut}{\mrm{Aut}}
\nc{\A}{\mathcal{A}}
\nc{\B}{\mathcal{B}}
\nc{\C}{\mathcal{C}}
\nc{\M}{\mathcal{M}}
\nc{\X}{X^{\bullet}}
\nc{\Y}{Y^{\bullet}}
\nc{\Z}{\mathbb{Z}}
\nc{\T}{T^{\bullet}}
\nc{\U}{U^{\bullet}}
\nc{\V}{V^{\bullet}}
\nc{\f}{f^{\bullet}}
\nc{\g}{g^{\bullet}}
\nc{\h}{h^{\bullet}}
\nc{\rr}{r^{\bullet}}
\nc{\s}{s^{\bullet}}
\nc{\Lv}{{\bf L}\nu}
\nc{\Lm}{{\bf L}\mu}
\nc{\Lx}{{\bf L}\xi}
\nc{\Pv}{{\bf P}\nu}
\nc{\w}{\widetilde}
\nc{\N}{\mathcal{N}}
\nc{\cw}[1]{\widetilde{#1}^{\bullet}}
\nc{\tr}{\mrm{tr}}
\nc{\pr}{\mrm{Pic}}
\nc{\Out}{\mrm{Out}}
\nc{\ch}{\mrm{char}}
\nc{\add}{\mrm{add}}
\nc{\RHom}{\mrm{{\bf R}Hom}}
\nc{\OT}{\otimes^{\bf L}}
\nc{\thick}{\mrm{thick}}
\nc{\cone}{\mrm{cone}}
\nc{\proj}{\mbox{{\rm -proj}}}
\nc{\Proj}{\mrm{Proj\,}}
\nc{\gl}{\mrm{gl}}
\nc{\im}{\mrm{im}\,}
\nc{\id}{\mrm{id}}
\nc{\ind}{\mrm{ind}}
\nc{\lcd}[1]{\mrm{lcd}(#1)}
\nc{\diag}{\mrm{diag}}
\nc{\pid}{-pseudo-identity\;}
\nc{\tp}[1]{\mrm{TrPic}(#1)}
\nc{\res}{\mrm{res}}
\nc{\PI}[1]{\mrm{PI}(#1)}
\nc{\uu}[1]{T(#1)}
\nc{\soc}{\mrm{soc}}
\nc{\va}{\varepsilon}

\begin{document}
\renewcommand{\thefootnote}{\alph{footnote}}
\setcounter{footnote}{-1}
\footnote{2020 Mathematics Subject Classification: 18G80, 13D09.}
\renewcommand{\thefootnote}{\alph{footnote}}
\setcounter{footnote}{-1}
\footnote{Keywords: Derived category, Derived equivalence, Standard derived equivalence, Triangle functor, Truncated triangle.}

\title[Rickard's question on derived equivalences]{Rickard's question on standard derived equivalences}
\author[Wei Hu]{Wei Hu}
\address{Wei Hu: School of Mathematical Sciences, Beijing Normal University, 100875 Beijing, China}
\email{huwei@bnu.edu.cn}
\author[Changchang Xi]{Changchang Xi}
\address{Changchang Xi: School of Mathematical Sciences, Capital Normal University, 100048 Beijing, China}
\email{xicc@cnu.edu.cn}
\author[Jin Zhang]{Jin Zhang}
\address{Jin Zhang: School of Mathematics and Statistics, Lanzhou University, 730000 Lanzhou, \&
School of Mathematical Sciences, Laboratory of Mathematics and Complex Systems, MOE, Beijing Normal University, 100875 Beijing, China}
\email{zj\_10@lzu.edu.cn}

\begin{abstract}
In 1991 Rickard asked whether every derived equivalence of algebras over a common field is standard. We construct a series of examples of non-standard derived equivalences, and thus answer the question negatively. Moreover, we give a necessary and sufficient condition for the question to be true.
\end{abstract}

\maketitle

\tableofcontents

\section{Introduction}
Let $A$ and $B$ be finite-dimensional algebras over a field. Denote by $\Db{A}$ the bounded derived category of the category $A\md$ of finitely generated $A$-modules. An equivalence $F: \Db{A}\ra \Db{B}$ is called a {\it derived equivalence} if $F$ is a triangle equivalence \cite{H, R0}. Moreover, if $F$, in addition, is naturally isomorphic to the derived tensor functor induced by a bounded complex of $B$-$A$-bimodules, then $F$ is called a {\it standard} derived equivalence \cite{R}. Clearly, standard derived equivalences form an important class of derived equivalences and have many significant applications, such as lifting or restricting to stable equivalences of Morita type, and calculating homological invariants and groups (see \cite{R, RZ, Y}).

How can an arbitrary derived equivalence be related to a standard derived equivalence? Rickard showed in \cite[Corollary 3.5]{R} that, given  a derived equivalence $F: \Db{A}\ra \Db{B}$, there always exists a standard derived equivalence $\w{F}: \Db{A}\ra \Db{B}$ such that $F$ and $\w{F}$ agree on the full subcategory of $\Db{A}$ consisting of projective $A$-modules and $\w{F}(\X)\simeq F(\X) $ for every object $\X$ of $\Db{A}$. However, it remains  unclear whether $F$ and $\w{F}$ are naturally isomorphic. Hence he proposed the following fundamental question \cite{R}.
\begin{question}{\rm \cite{R} }
 Are all derived equivalences standard?
\end{question}

We say that Rickard's question \emph{holds for} a finite-dimensional algebra $A$ over a field if every derived auto-equivalence of $\Db{A}$ is standard. It is known that Rickard's question holds for $A$ if and only if every derived equivalence $\Db{A}\ra \Db{B}$ between $A$ and an arbitrary algebra $B$ is standard.

Neeman \cite{N} investigated the question with the emphasis on morphisms between triangles and obtained a positive answer by adding axioms to the notion of triangulated categories. However, the question remains open in general. Only a few cases have been verified. For instance, a positive answer has been obtained for hereditary algebras \cite{MY}, triangular algebras \cite{C}, and derived-discrete algebras \cite{BC, CZ}. All of these results seem to suggest a positive answer to Rickard's question.

In the present paper, we will provide a negative answer to Rickard's question. This is done by understanding derived equivalences of special form. During this course, we are led to construct a series of counterexamples.

Following \cite{CY}, a derived auto-equivalence $(F, \xi): \Db{A}\ra \Db{A}$  (respectively,  $\Kb{A}\ra \Kb{A}$) is called a {\it pseudo-identity} if $F$ fixes all objects of $\Db{A}$ (respectively, $\Kb{A}$) and fixes $A\md[i]$ (respectively, $A\proj[i]$) for all $i\in \Z$, where $[i]$ is the $i$-th shift functor. Here, $A$-proj is the category of finitely generated projective $A$-modules, and $\Kb{A}$ denotes the bounded homotopy category of $A$-proj.

First, we give a criterion for Rickard's question to hold.

\begin{theorem}\label{t0}
Let $A$ be a finite-dimensional algebra over a field. Then Rickard's question holds for $A$ if and only if every pseudo-identity $(F, \id): \Kb{A}\ra \Kb{A}$ that fixes all truncated triangles is naturally isomorphic to the identity triangle functor $(\id,\id)$.
\end{theorem}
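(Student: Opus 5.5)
The plan is to reduce, in both directions, to the comparison between an arbitrary derived auto-equivalence and the standard one Rickard attaches to it. Throughout we use the standard identification of $\Kb{A}$ with the thick subcategory of compact objects of $\Db{A}$. Every triangle auto-equivalence of $\Db{A}$ preserves this subcategory. Conversely, every triangle auto-equivalence of $\Kb{A}$ extends to one of $\Db{A}$, uniquely up to natural isomorphism, since $\Db{A}\simeq\Ka$. We also use Rickard's result quoted above \cite[Corollary 3.5]{R}.

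\emph{``Only if''.} Let $(F,\id)$ be a pseudo-identity of $\Kb{A}$ fixing all truncated triangles, and extend it to a derived auto-equivalence $\bar F$ of $\Db{A}$. By hypothesis $\bar F$ is standard, so $\bar F\simeq X\OT_A-$ for a two-sided tilting complex $X$. Since $F$ fixes $A$ together with $\End(A)=A^{op}$, the restriction to $\add(A)$ is the identity. Hence $X\simeq {}_1A_\alpha$ for some automorphism $\alpha$, and in fact $\alpha$ is inner, because $F$ acts as the identity on $\Hom(A,A)$. Thus $F\simeq\id$ as functors. The remaining point is to check that the connecting isomorphism is compatible with the given $\xi=\id$, i.e.\ that the natural isomorphism can be chosen to be a morphism of triangle functors. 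For this I would use that $X\OT_A-$ carries its canonical $\xi$. A natural isomorphism $\eta\colon F\to\id$ that agrees on $\add(A)$ transports $\id$ to a commutation isomorphism $\eta[1]\circ\eta_{F}^{-1}$ with the shift. Fixing truncated triangles should force this to coincide with $\id$ up to the rescaling that the definitions allow.

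\emph{``If''.} Let $G$ be any derived auto-equivalence of $\Db{A}$, and let $\w G$ be the standard one from \cite[Corollary 3.5]{R} agreeing with $G$ on $A\proj$. Then $H:=\w G^{-1}G$ restricted to $\Kb{A}$ fixes every projective module and every morphism between projectives. Using Rickard's object-wise isomorphisms $\w G(\X)\simeq G(\X)$ and replacing $H$ by an isomorphic functor, I would make $H$ strictly fix objects and $A\proj[i]$, and adjust its commutation isomorphism to $\id$. This gives a pseudo-identity $(H,\id)$.

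The key step is to show that $H$ fixes all truncated triangles; I expect this to be the main obstacle. These are presumably triangles built from a complex and its brutal truncations $\sigma_{\ge n}\X\to\X\to\sigma_{<n}\X\to$. Both $G$ and $\w G$ send such triangles to triangles, and on the truncation pieces they agree with each other via induction on the length of the complex. The hope is that the third morphism is determined by the differential between projectives, on which $H$ is the identity. If $H$ sent such a triangle to one with a different connecting map, I would try to rule this out by comparing with the standard functor's behaviour on mapping cones. Once this is established, the hypothesis gives $H\simeq\id$, so $G\simeq\w G$ is standard on $\Kb{A}$. Extending to $\Db{A}$ then finishes the proof, using the uniqueness of extension from $\Kb{A}$ to $\Db{A}$.
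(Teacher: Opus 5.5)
Your overall reduction matches the paper's: pass to $\Kb{A}$, write an arbitrary auto-equivalence as a standard one composed with a pseudo-identity, and normalise the connecting isomorphism to $\id$. But the step you identify as the main obstacle is formulated in a way that cannot work.

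\textbf{The ``if'' direction.} You propose to show that the pseudo-identity $H=\w{G}^{-1}G$ itself fixes all truncated triangles, on the grounds that the maps in $\Delta_i(\X)$ are determined by the differential. They are not. A triangle with prescribed first morphism $\uu{d_X^i}$ determines $\theta(\uu{d_X^i})$ and $\pi(\uu{d_X^i})$ only up to an automorphism of $\X$, and this non-uniqueness is exactly what is at stake.

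Concretely, conjugating a pseudo-identity $(H,\id)$ by any $[1]$-compatible family of automorphisms $\lambda_{\X}$ that is the identity on each $A\proj[i]$ gives another pseudo-identity. It is isomorphic to $(H,\id)$ as a triangle functor and sends $\theta_L(\X)$ to $\lambda_{\X}H(\theta_L(\X))$. So no argument can show that a \emph{given} pseudo-identity fixes truncated triangles.

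What is needed, and what is the real content of Proposition \ref{truncated}, is to \emph{construct} such a family so that the conjugate fixes them. This is done by induction on the length:
\begin{enumerate}
\item Assuming shorter complexes are already handled, show that $\uu{d_X^0}$ is fixed, using $\Hom_{\Kb{A}}(X^0[-1],{\X}^{\geq 2})=0$. Then take $\lambda^L_{\X}$ from comparing $H(\Delta_L(\X))$ with $\Delta_L(\X)$.
\item The right truncated triangle is then fixed only up to an error $\cpx{\phi}\in\Zmap_1(A)(\X,{\X}^{\leq n-1})$, by Lemma \ref{st}(4). This error is removed by a second conjugation by $\id_{\X}+\cpx{\psi}$ with $(\cpx{\psi})^2=0$, which does not disturb $\Delta_L(\X)$. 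All choices are made $[1]$-compatibly, so the connecting isomorphism stays $\id$.
\item The intermediate $\Delta_t(\X)$ follow: one checks that $\uu{d_X^t}$ is fixed, and writes $\theta(\uu{d_X^t})$ and $\pi(\uu{d_X^t})$ as composites of $\theta_L$'s and $\pi_R$'s.
\end{enumerate}
The crux is that a single automorphism of $\X$ must fix all of its truncated triangles simultaneously, and your proposal has no mechanism for this.

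\textbf{The ``only if'' direction.} Here the last step is misdirected; truncated triangles play no role in this direction. Standardness means an isomorphism of \emph{triangle} functors $(\bar F,\bar\xi)\simeq(X\OT_A-,\mathrm{can})$. Once $X\simeq A$ in $\Db{A^e}$, this gives $(F,\id)\simeq(\id,\id)$ as triangle functors directly (Lemma \ref{idd}).

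By discarding the triangle structure and trying to recover it from truncated triangles, you rely on something false. Apply Propositions \ref{id} and \ref{truncated} to the non-standard $(\id,\xi(a))$ of Theorem \ref{3.1}, restricted to $\Kb{A}$ where it is still non-standard. This yields a pseudo-identity $(G,\id)$ that fixes all truncated triangles and satisfies $G\simeq\id$ as plain functors, yet $(G,\id)\not\simeq(\id,\id)$.

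Finally, extending a triangle auto-equivalence from $\Kb{A}$ to $\Db{A}$, uniquely and compatibly with standardness, does not follow from $\Db{A}\simeq\Ka$ alone. The paper uses Chen's theorem that $\res:\Out(\Db{A})\ra\Out(\Kb{A})$ is a group isomorphism.
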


For the definition of truncated triangles, we refer to Definition \ref{def-truncated}. It was proved in \cite[Theorem 6.1]{CY} that Rickard's question holds for $A$ if every pseudo-identity $(F, \xi): \Kb{A}\ra \Kb{A}$ is naturally isomorphic to the identity  $(\id,\id)$. By Theorem \ref{t0}, we do not have to check every pseudo-identity. It is enough to verify only those pseudo-identities  $(F, \id)$ that fix all truncated triangles. As an application of Theorem \ref{t0}, we give a short proof of Rickard's question for derived-discrete algebras in some cases; see Example \ref{ex}.

Based on Theorem \ref{t0} and a reduction lemma (see Lemma \ref{coro}), we have the following negative answer to Rickard's question. The counterexamples are given in Example \ref{counterexa}.

\begin{theorem}\label{t}
There exist infinitely many non-standard derived equivalences between finite-dimensional algebras over a common field.
\end{theorem}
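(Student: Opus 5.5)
By Theorem \ref{t0}, it suffices to construct, for infinitely many (pairwise non-Morita-equivalent) finite-dimensional algebras $A$ over a field $k$, a pseudo-identity $(F,\id)\colon \Kb{A}\ra \Kb{A}$ that fixes all truncated triangles but is not naturally isomorphic to $(\id,\id)$. Since $\Kb{A}$ sits inside $\Db{A}$, such an $F$ should then give a derived auto-equivalence of $\Db{A}$ that is not standard; the implication in Theorem \ref{t0} provides exactly this.

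I would construct $F$ as a ``twisted identity''. It is the identity on objects, on $A\proj[i]$, and on all chain maps concentrated in a single degree. On morphisms between complexes of larger width, it is modified by a correction term. Concretely, I would look for an algebra $A$ with a nonzero element $c\in\rad^2 A$ in suitable position, for instance a radical-cube-zero or truncated path algebra of a quiver with a loop or a cycle, and with $\ch k$ possibly chosen to force some sign. On a homotopy class $f\colon \X\ra\Y$ I would set
\[
F(f)=f+\sum_i \delta(f)_i,
\]
where $\delta(f)$ is built from components $\Hom(X^i,Y^{i-1})$ and uses multiplication by $c$, defined so that it is null on maps between stalk complexes. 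The key verifications, in order, are the following.
\begin{enumerate}
\item $F$ is well defined on homotopy classes and is functorial. This reduces to a cocycle identity, $\delta(gf)=\delta(g)f+g\delta(f)$, which says that $\delta$ is a derivation-like Hochschild-type $2$-cocycle.
\item $F$ commutes with shifts, with $\xi=\id$.
\item $F$ sends distinguished triangles to distinguished triangles, and in particular fixes truncated triangles (Definition \ref{def-truncated}). This should follow because on truncation maps the correction term vanishes.
\end{enumerate}
The reduction lemma (Lemma \ref{coro}) would let me check these on a small generating class of complexes, for instance two-term complexes, and then extend.

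The decisive and hardest step is to show that $F\not\cong\id$. Any natural isomorphism $\eta\colon \id\Rightarrow F$ restricts on stalk complexes $P[i]$ to automorphisms commuting with all maps of $A\proj$. After normalizing, these can be taken to be the identity up to a central unit. Naturality on a carefully chosen two-term complex, the cone of a map between indecomposable projectives involving $c$, should then force $\delta$ to be a Hochschild coboundary. So I need a class in a Hochschild-type cohomology group that survives: the cocycle $\delta$ must not be inner. I would first try an explicit small quiver with relations and compute directly. I would then vary a parameter, such as the number of vertices or the length of a cycle, to obtain infinitely many pairwise non-derived-equivalent examples. I expect this non-triviality computation to be the main obstacle.
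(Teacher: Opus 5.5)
\textbf{There is a genuine gap.} You never actually produce a triangle functor, and the step you treat as routine is the heart of the problem.

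A pseudo-identity in Theorem \ref{t0} is by definition a triangle functor. So you must show that for \emph{every} morphism $\f\colon\X\ra\Y$ the sequence $\X\xrightarrow{F(\f)}\Y\xrightarrow{F(\theta(\f))}\cone(\f)\xrightarrow{F(\pi(\f))}\X[1]$ is distinguished. Your justification in (3) is that the correction term vanishes on truncation maps. That only shows $F$ fixes the very special family $\Delta_i(\X)$; it says nothing about cones of arbitrary morphisms.

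Nor can Lemma \ref{coro} be used to verify (1)--(3) on two-term complexes and then extend. Its hypothesis is that $F$ is already a pseudo-identity fixing truncated triangles, so invoking it here is circular. Also, functoriality of $f\mapsto f+\delta(f)$ requires $\delta(gf)=\delta(g)f+g\delta(f)+\delta(g)\delta(f)$, not the derivation identity.

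Without a specific algebra, a specific $\delta$, and a proof of triangle preservation, what you have is a search strategy rather than a proof. Moreover, the step you flag as the main obstacle, $F\not\simeq\id$, is in fact the easy one: in Theorem \ref{3.1} it is a short computation with the cones of $\rho(a)$ and $\rho(s)$.

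The ideas you are missing are these.
\begin{enumerate}
\item Restricting to $\xi=\id$ and fixed truncated triangles only makes the search harder. By Lemma \ref{idd}, any pseudo-identity not isomorphic to $(\id,\id)$ is already non-standard. The paper therefore keeps the functor equal to the identity and twists only the connecting isomorphism, taking $\xi(a)_{\Y}$ to be multiplication by $1+a$. This makes well-definedness and functoriality automatic.
\item By Lemma \ref{st} and Proposition \ref{p}, triangle preservation for $(\id,\xi(a))$ reduces to a single condition: for every chain map $\f$ there is $\g$ with $\f\g\f\sim a\f$.
\item This condition is proved in Lemma \ref{k} by duality. For a local commutative Frobenius algebra, the trace pairing of Lemma \ref{form} makes $Z^0$ of one Hom-complex and $B^0$ of the other mutual orthogonals. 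The identity $\omega(\tr(Z^2))=\omega(\tr(aZ))$ from Lemma \ref{m} uses $k=\Z/2\Z$ and the hypothesis $\omega(x^2)=\omega(ax)$. It shows that $a\f$ is orthogonal to $(\f Z^0\f+B^0)^{\perp}$, hence lies in $\f Z^0\f+B^0$.
\item The hypothesis $a^2\neq 0$ then drives the non-standardness argument.
\item Example \ref{counterexa} supplies infinitely many examples: $k[x_1,\dots,x_m]/(x_1^{2n_1+1},\dots,x_m^{2n_m+1})$ with $a=\prod_i x_i^{n_i}$.
\end{enumerate}
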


The paper is outlined as follows: In Section \ref{pre}, we give basic definitions and prove some preparatory results. In Sections \ref{aut} and \ref{example}, we prove Theorems \ref{t0} and \ref{t}, respectively. Also, we display explicit counterexamples in Section \ref{example}.

\section{Preliminaries}\label{pre}
In this section, we introduce basic notation and establish preparatory results for proofs of the main results in Introduction.

Let $R$ be a ring with identity. Denote by $R\md$ the category of finitely generated left $R$-modules and $R\proj$ the full subcategory of $R\md$ consisting of projective $R$-modules.

Let $\C$ be an additive category. Denote by $\Cc$ the category of complexes over $\mathcal{C}$, and by [1] the shift functor. Let $\Kc$ be the homotopy category of $\C$, and let $\K^b(\C)$ be the bounded homotopy category of $\C$. If $\C$ is a full subcategory of an abelian category $\A$, we denote by $\Kbb{\C}$ the homotopy category of bounded-above complexes over $\mathcal{C}$ with bounded homology. Given complexes $\X=(X^i,d^i_X: X^i\ra X^{i+1}), \Y=(Y^i, d_Y^i:Y^i\ra Y^{i+1})$ in $\Cc$, and two morphisms $\f=(f^i), \g=(g^i):\X \to \Y$ in $\Cc$, we write $\f\sim \g$ if they are homotopic, that is, there are maps $c^i: X^i\ra Y^{i-1}$ such that $f^i-g^i=d_{Y}^{i-1}c^i+c^{i+1}d_X^i$ for all $i \in \mathbb{Z}$. In this case, $\cpx{c}=(c^i): \X\ra \Y[-1]$
 is called a \emph{homotopy} for $\f\sim \g$.

For simplicity, we use the same symbol $\f$ for its homotopy class in $\Kc$.
For other notation, definitions and basic facts about triangulated categories, we refer to \cite{H}.

A morphism $f: X\ra Y$ in an additive category $\C$ is said to be {\it radical} if $\id_Y-fg$ is an isomorphism for any $g\in \Hom_{\C}(Y, X)$.  If $\C$ is a Krull-Schmidt category, then this is equivalent to saying that for every
indecomposable object $Z$ and every pair of morphisms
$h:Z\to X$ and $g:Y\to Z$, the morphism $gfh$ is not an isomorphism (see, for example, \cite[Corollary 2.10]{Krause}).

Assume that $\C$ is a Krull--Schmidt category. Then every morphism in $\C$ decomposes into an isomorphism and a radical morphism. More precisely, given a morphism $f: X\ra Y$ in $\C$, there are decompositions $X=X_1\oplus X_2$ and $Y=Y_1\oplus Y_2$ and automorphisms $s_X: X\ra X$ and $s_Y: Y\ra Y$ such that
$$s_Yfs_X^{-1}=\left(\begin{matrix}
s& 0\\
0&r
\end{matrix}\right): X_1\oplus X_2\lra Y_1\oplus Y_2$$
 where $s: X_1\ra Y_1$ is an isomorphism and $r:X_2\ra Y_2$ is a radical morphism.

\subsection{Morphisms between standard triangles}\label{triangles}
Let $\C$ be an additive category. Let $\X=(X^i, d_X^i)$ and $\Y=(Y^i, d_Y^i)$ be complexes over $\C$. By a \emph{graded} morphism $\g: \X\ra \Y$ of complexes over $\C$, we mean a family of morphisms $g^i: X^i\ra Y^i$ in $\C$ indexed by $i\in \Z$. In general, a graded morphism may not be a morphism of complexes.

Suppose that $\f:\X\to\Y$ is a morphism of complexes. The \emph{mapping cone} of $\f$ is defined by
$$\cone(\f):=(X^{i+1}\oplus Y^i, \left(\begin{matrix}
-d_X^{i+1}& 0\\
f^{i+1}&d_Y^i
\end{matrix}\right): X^{i+1}\oplus Y^i\ra X^{i+2}\oplus Y^{i+1}).$$
The sequence
$$\Delta(\f): \X\stackrel{\f}\lra \Y\stackrel{\theta(\f)}\lra \cone(\f)\stackrel{\pi(\f)}\lra \X[1]$$
in $\Kc$, with
$ \theta(\f):=\left(\begin{smallmatrix}
0\\
1
\end{smallmatrix}\right) \text{\;and\;} \pi(\f):=(1, 0),$
is called a \emph{standard triangle}. Every triangle in $\Kc$ is isomorphic to a standard triangle. In this way, $\Kc$ is a triangulated category.

Given morphisms $\f:\X\to \Y$, $\g: \U\to \V$, $\cpx{r}: \Y\to \V$ and $\cpx{s}: \X\to \U$ of complexes over $\C$, we consider the diagram
 \[(\divideontimes)\;\;\;\;\xymatrix@R=1.5em{
\X\ar[r]^{\f}&\Y\ar[r]^-{\theta(\f)}\ar[d]^{\cpx{r}}& \cone(\f)\ar[r]^{\pi(\f)}
\ar@{..>}[d]&\X[1]\ar[d]^{\cpx{s}[1]} \\
\U\ar[r]^{\g}&\V\ar[r]^-{\theta(\g)}& \cone(\g)\ar[r]^{\pi(\g)}&\U[1]
}\]
in $\Kc$ with rows being standard triangles. In what follows, we write a graded morphism from $\cone(\f)$  to $\cone(\g)$ as $\left(\begin{smallmatrix}
\cpx{a}& \cpx{b}\\
\cpx{c}& \cpx{d}
\end{smallmatrix}\right)$, where its $i$-component is $\left(\begin{smallmatrix}
a^i& b^i\\
c^i& d^i
\end{smallmatrix}\right): X^{i+1}\oplus Y^i\ra U^{i+1}\oplus V^i$ with $a^i: X^{i+1}\to U^{i+1}, b^i:Y^i\to U^{i+1}, c^i:X^{i+1}\to V^i$ and $ d^i:Y^i\to V^i$ being morphisms in $\C$.

The following lemma demonstrates the existence and possible choices of a morphism from $\cone(\f)$ to $\cone(\g)$ in $(\divideontimes)$.

\begin{lemma}\label{st}
Keep the notation above. Then the following statements hold.

$(1)$ If $\left(\begin{smallmatrix}
\cpx{a}& \cpx{b}\\
\cpx{c}& \cpx{d}
\end{smallmatrix}\right): \cone(\f)\ra \cone(\g)$ is a morphism of complexes, making one of the squares in $(\divideontimes)$ commute in $\Kc$, then it is homotopic to a morphism of the form $\left(\begin{smallmatrix}
\cpx{x}&0 \\
\cpx{y}&\cpx{z}
\end{smallmatrix}\right)$.

$(2)$  A graded morphism $\left(\begin{smallmatrix}
\cpx{a}& 0\\
\cpx{c}& \cpx{d}
\end{smallmatrix}\right): \cone(\f)\ra \cone(\g)$
is a morphism of complexes if and only if $\cpx{a}: \X[1]\ra \U[1]$ and $\cpx{d}: \Y\ra \V$ are morphisms of complexes such that $\cpx{d} \f\sim \g(\cpx{a}[-1])$ with $\cpx{c}[-1]$ as a homotopy.

$(3)$ There is a morphism $\cpx{\phi}:\cone(\f)\ra \cone(\g)$ of complexes, making both squares in $(\divideontimes)$ commute if and only if there is a morphism $\h: \Y\ra \U$ of complexes such that $\g \h \f\sim \g\cpx{s}-\cpx{r}\f$. In this case, $\cpx{\phi}\sim\left(\begin{smallmatrix}
\cpx{s}[1]&0 \\
\cpx{c}[1]&\cpx{r}+\g\h
\end{smallmatrix}\right): \cone(\f)\ra \cone(\g)$ with $\cpx{c}$ a homotopy for $(\cpx{r}+\g\h)\f\sim \g\cpx{s}$.

$(4)$  If two morphisms $\cpx{\phi}, \cpx{\psi}: \cone(\f)\ra \cone(\g)$ of complexes make the squares in $(\divideontimes)$ commute, then there exists a morphism $\h: \Y\ra \U$ of complexes such that $\g\h\f\sim 0$  and
$\cpx{\phi}-\cpx{\psi}\sim\left(\begin{smallmatrix}
0&0 \\
\cpx{c}[1]&\g\h
\end{smallmatrix}\right)$, where $\cpx{c}$ is a homotopy for $\g\h\f\sim 0$.

$(5)$ Assume that $\cpx{r}$ and $\cpx{s}$ in $(\divideontimes)$ are isomorphisms and $\g$ is a radical morphism in $\Kc$. If there exists a morphism $\cpx{\phi}:\cone(\f)\ra \cone(\g)$  of complexes making both squares in $(\divideontimes)$ commute in $\Kc$, then $\cpx{\phi}$ is an isomorphism in $\Kc$.
\end{lemma}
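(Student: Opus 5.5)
The plan is to prove all five parts by computing directly with the mapping cone differential
$$d_{\cone(\f)}^i=\left(\begin{smallmatrix}-d_X^{i+1}&0\\ f^{i+1}&d_Y^i\end{smallmatrix}\right),$$
together with long exact $\Hom$-sequences in $\Kc$. Part (2) is pure bookkeeping, and I would do it first. Write out $d_{\cone(\g)}\left(\begin{smallmatrix}\cpx{a}&0\\ \cpx{c}&\cpx{d}\end{smallmatrix}\right)=\left(\begin{smallmatrix}\cpx{a}&0\\ \cpx{c}&\cpx{d}\end{smallmatrix}\right)d_{\cone(\f)}$ entrywise. The $(1,1)$ entry says $\cpx{a}$ commutes with the (negated) differentials, i.e.\ $\cpx{a}:\X[1]\ra\U[1]$ is a chain map. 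The $(2,2)$ entry says $\cpx{d}$ is a chain map. The $(2,1)$ entry reads $g^{i+1}a^i+d_V^ic^i=c^{i+1}(-d_X^{i+1})+d^{i+1}f^{i+1}$, which is exactly the statement that $\cpx{c}[-1]$ is a homotopy for $\cpx{d}\f\sim\g(\cpx{a}[-1])$.

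For (1), let $\cpx{\phi}=\left(\begin{smallmatrix}\cpx{a}&\cpx{b}\\ \cpx{c}&\cpx{d}\end{smallmatrix}\right)$. The chain map condition in the $(1,2)$ entry gives $-d_U^{i+1}b^i=b^{i+1}d_Y^i$, so $\cpx{b}$ is (up to sign) a chain map $\Y\ra\U[1]$. I then use whichever square commutes. If $\cpx{\phi}\theta(\f)\sim\theta(\g)\cpx{r}$, then composing with $\pi(\g)$ and using $\pi(\g)\theta(\g)=0$ shows $\cpx{b}=\pi(\g)\cpx{\phi}\theta(\f)\sim0$. If instead $\pi(\g)\cpx{\phi}\sim\cpx{s}[1]\pi(\f)$, then precomposing with $\theta(\f)$ gives the same conclusion. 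So $b^i=d_U^{i+1}t^i-t^{i+1}d_Y^i$ for some graded $t^i:Y^i\ra U^{i+1}$, possibly after adjusting signs. I then subtract the null-homotopic map $d\cpx{k}+\cpx{k}d$, where $\cpx{k}$ has the single nonzero entry $t$ in the $(1,2)$ position. This kills $\cpx{b}$ and only modifies $\cpx{a},\cpx{c},\cpx{d}$.

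For (3), suppose first that $\cpx{\phi}$ exists. By (1), take $\cpx{\phi}=\left(\begin{smallmatrix}\cpx{x}&0\\ \cpx{y}&\cpx{z}\end{smallmatrix}\right)$; the extra homotopy does not affect commutativity in $\Kc$. The right square gives $\cpx{x}\sim\cpx{s}[1]$. After a further homotopy supported in the $(1,1)$ and $(2,1)$ entries we may assume $\cpx{x}=\cpx{s}[1]$. The left square gives $\theta(\g)(\cpx{z}-\cpx{r})\sim0$, so applying $\Hom_{\Kc}(\Y,-)$ to the triangle of $\g$ gives $\cpx{z}-\cpx{r}\sim\g\h$ for some $\h:\Y\ra\U$. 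Replace $\cpx{z}$ by $\cpx{r}+\g\h$. Then (2) says $(\cpx{r}+\g\h)\f\sim\g\cpx{s}$, which is the claimed condition $\g\h\f\sim\g\cpx{s}-\cpx{r}\f$. Conversely, given $\h$, choose a homotopy $\cpx{c}$ and define $\cpx{\phi}$ by the displayed matrix. It is a chain map by (2), and both squares commute on the nose. Part (4) follows by applying (3) to $\cpx{\phi}-\cpx{\psi}$, which makes both squares commute with $\cpx{r}=0$ and $\cpx{s}=0$.

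Part (5) is the main obstacle, because the square $\X\ra\Y$ over $\U\ra\V$ need not commute with $\cpx{s}$, so the triangulated five lemma does not apply directly. By (3) there is $\h$ with $\cpx{r}\f\sim\g(\cpx{s}-\h\f)$. Hence $\cpx{s}':=\cpx{s}-\h\f$ gives a genuine morphism of triangles $(\cpx{s}',\cpx{r},\cpx{\phi})$, and it suffices to show that $\cpx{s}'$ is an isomorphism. Set $\cpx{e}=\cpx{s}'\cpx{s}^{-1}$ and $\cpx{w}=\h\cpx{r}^{-1}$. Using $\f\sim\cpx{r}^{-1}\g\cpx{s}'$, we get
$$\cpx{e}=1_{\U}-\h\f\cpx{s}^{-1}=1_{\U}-\cpx{w}\g\cpx{e},$$
so $(1_{\U}+\cpx{w}\g)\cpx{e}=1_{\U}$. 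Since $\g$ is radical, $1_{\V}+\g\cpx{w}$ is invertible. The standard identity $(1+\cpx{w}\g)^{-1}=1-\cpx{w}(1+\g\cpx{w})^{-1}\g$ then shows that $1_{\U}+\cpx{w}\g$ is invertible. Therefore $\cpx{e}$, and hence $\cpx{s}'$, is an isomorphism. The five lemma for triangulated categories then yields that $\cpx{\phi}$ is an isomorphism in $\Kc$.
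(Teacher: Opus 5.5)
Your parts (1), (2) and (4) follow the paper. In (1), reading off $\cpx{b}=\pi(\g)\cpx{\phi}\theta(\f)$ is a neat way to get $\cpx{b}\sim 0$ from either square. One indexing slip: the homotopy has components $t^i:Y^i\ra U^i$, with $b^i=t^{i+1}d_Y^i-d_U^i t^i$, not $t^i:Y^i\ra U^{i+1}$.

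The genuine gap is in the forward direction of (3), at ``the right square gives $\cpx{x}\sim\cpx{s}[1]$''. For $\cpx{\phi}=\left(\begin{smallmatrix}\cpx{x}&0\\ \cpx{y}&\cpx{z}\end{smallmatrix}\right)$, the right square says only $(\cpx{x}-\cpx{s}[1])\pi(\f)\sim 0$. Exactness of $\Hom_{\Kc}(-,\U[1])$ along $\Delta(\f)$ then gives only $\cpx{x}-\cpx{s}[1]\sim\cpx{m}(\f[1])$ for some chain map $\cpx{m}:\Y[1]\ra\U[1]$, and this need not vanish. Take $\X=\Y=\U=\V=P\neq 0$ a stalk complex, $\f=\g=\id$, $\cpx{s}=\id$, $\cpx{r}=0$ and $\cpx{\phi}=0$. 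Both cones are contractible, so both squares commute, yet $\cpx{x}=0\not\sim \id_{P[1]}$. Moreover, by (2) no chain map of this shape with $\cpx{z}=0$ can have $\cpx{x}=\cpx{s}[1]$, since that would force $0\sim\g\cpx{s}=\id_P$. So a correction touching only the $(1,1)$ and $(2,1)$ entries, which leaves $\cpx{z}$ unchanged, cannot achieve your normalization.

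The missing step is the one the paper uses. Let $\cpx{w}$ be a homotopy for $\cpx{x}-\cpx{s}[1]\sim\cpx{m}(\f[1])$. Add $d\cpx{k}+\cpx{k}d$ to $\cpx{\phi}$, where $\cpx{k}:\cone(\f)\ra\cone(\g)[-1]$ has first row $(-\cpx{w},-\cpx{m}[-1])$ and second row zero. This makes the $(1,1)$ entry exactly $\cpx{s}[1]$, keeps the $(1,2)$ entry zero because $\cpx{m}$ is a chain map, and replaces $\cpx{z}$ by $\cpx{z}-\g(\cpx{m}[-1])$. After this normalization the rest of your argument goes through: the left square gives $\cpx{z}-\cpx{r}\sim\g\h$ for the new $\cpx{z}$, and then you apply (2). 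Equivalently, without normalizing: if $\cpx{z}-\cpx{r}\sim\g\h_1$, the correct choice is $\h=\h_1-\cpx{m}[-1]$, not $\h_1$.

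A smaller inaccuracy is in the converse of (3). The left square does not commute on the nose, since $\cpx{\phi}\theta(\f)-\theta(\g)\cpx{r}=\theta(\g)\g\h$, which is only null-homotopic.

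Your argument for (5) is correct, granting (3), and it differs from the paper's. The paper uses the normal form from (3): $(\cpx{s},\cpx{r}+\g\h,\cpx{\phi})$ is a morphism of triangles, and $\cpx{r}+\g\h=(1_{\V}+\g\h{\cpx{r}}^{-1})\cpx{r}$ is invertible directly by the definition of a radical morphism, so the five lemma applies at once. You instead keep $\cpx{r}$ and replace $\cpx{s}$ by $\cpx{s}-\h\f$. This needs only the existence of $\h$, plus the fact that the right square survives because $\f[1]\pi(\f)\sim 0$, which you should state. The cost is the Jacobson-type identity $(1+\cpx{w}\g)^{-1}=1-\cpx{w}(1+\g\cpx{w})^{-1}\g$, needed to pass from invertibility of $1_{\V}+\g\cpx{w}$ to that of $1_{\U}+\cpx{w}\g$.
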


\begin{proof}
(1) Assume that the left square in $(\divideontimes)$ commutes. Then
$$\left(\begin{matrix}
\cpx{b}\\
\cpx{d}
\end{matrix}\right)= \left(\begin{matrix}
\cpx{a}& \cpx{b}\\
\cpx{c}& \cpx{d}
\end{matrix}\right)\circ \theta(\f)\sim \theta(\g)\circ \cpx{r}=\left(\begin{matrix}
0\\
\cpx{r}
\end{matrix}\right).$$
Let $\left(\begin{smallmatrix}
\cpx{m}\\
\cpx{n}
\end{smallmatrix}\right): \Y\ra \cone(\g)[-1]$ be a homotopy for $\left(\begin{smallmatrix}
\cpx{b}\\
\cpx{d}
\end{smallmatrix}\right)\sim\left(\begin{smallmatrix}
0\\
\cpx{r}
\end{smallmatrix}\right)$, that is,
\[\left(\begin{matrix}
-d_U^i&0\\
g^i&d_V^{i-1}
\end{matrix}\right)\left(\begin{matrix}
m^i\\
n^i
\end{matrix}\right)+\left(\begin{matrix}
m^{i+1}\\
n^{i+1}
\end{matrix}\right)d_Y^i=\left(\begin{matrix}
b^i\\
d^i-r^i
\end{matrix}\right)\]
 for all $i$. Then $-d_U^im^i+m^{i+1}d_Y^i=b^i$. This gives a graded morphism $\left(\begin{smallmatrix}
0 &-\cpx{m}\\
0&0
\end{smallmatrix}\right): \cone(\f)\ra \cone(\g)[-1]$ such that
$$\left(\begin{matrix}
0&-m^{i+1}\\
0&0
\end{matrix}\right)\left(\begin{matrix}
-d_X^{i+1}&0\\
f^{i+1}&d_Y^i
\end{matrix}\right)+\left(\begin{matrix}
-d_U^i&0\\
g^i&d_V^{i-1}
\end{matrix}\right)\left(\begin{matrix}
0&-m^i\\
0&0
\end{matrix}\right)+\left(\begin{matrix}
a^i&b^i\\
c^i&d^i
\end{matrix}\right)=\left(\begin{matrix}
a^i-m^{i+1}f^{i+1}&0\\
c^i&d^i-g^i m^i
\end{matrix}\right).$$
This means
$$\left(\begin{matrix}
\cpx{a}& \cpx{b}\\
\cpx{c}& \cpx{d}
\end{matrix}\right)\sim\left(\begin{matrix}
\cpx{a}-(\cpx{m}[1])(\f[1])&0\\
\cpx{c}&\cpx{d}-\g\cpx{m}
\end{matrix}\right): \cone(\f)\lra \cone(\g).$$
Similarly, we can show the case that the right square commutes.

(2) This follows by definition.

(3)  Assume that there is a morphism $\left(\begin{smallmatrix}
\cpx{a}& \cpx{b}\\
\cpx{c}& \cpx{d}
\end{smallmatrix}\right): \cone(\f)\ra \cone(\g)$ of complexes, making the squares in $(\divideontimes)$ commute. By (1), we may assume $\cpx{b}=0$. The commutativity of the left square implies that
$$\left(\begin{matrix}
0\\
\cpx{d}
\end{matrix}\right)= \left(\begin{matrix}
\cpx{a}& 0\\
\cpx{c}& \cpx{d}
\end{matrix}\right)\circ \theta(\f)\sim \theta(\g)\circ \cpx{r}=\left(\begin{matrix}
0\\
\cpx{r}
\end{matrix}\right).$$
Thus $\theta(\g) (\cpx{d}-\cpx{r})=\left(\begin{smallmatrix}
0\\
\cpx{d}-\cpx{r}
\end{smallmatrix}\right)\sim 0$. Hence there is a morphism $\h: \Y\ra \U$ of complexes such that $\cpx{d}-\cpx{r}\sim \g\h$. Similarly, by the commutativity of the right square, there is a morphism $\cpx{m}: \Y[1]\ra \U[1]$ of complexes such that $\cpx{a}-\cpx{s}[1]\sim \cpx{m}(\f[1])$. Let $\cpx{w}: \X[1]\ra (\U[1])[-1]$ be a homotopy for $\cpx{a}-\cpx{s}[1]\sim\cpx{m}(\f[1])$, that is, $w^{i+1}(-d_X^{i+1})+(-d_U^i)w^i=a^i-s^{i+1}-m^if^{i+1}$ for all $i$. This gives rise to a graded morphism $\left(\begin{smallmatrix}
-\cpx{w} &-\cpx{m}[-1]\\
0&0
\end{smallmatrix}\right): \cone(\f)\ra \cone(\g)[-1]$ such that
$$\left(\begin{matrix}
-w^{i+1}&-m^i\\
0&0
\end{matrix}\right)\left(\begin{matrix}
-d_X^{i+1}&0\\
f^{i+1}&d_Y^i
\end{matrix}\right)+\left(\begin{matrix}
-d_U^i&0\\
g^i&d_V^{i-1}
\end{matrix}\right)\left(\begin{matrix}
-w^i&-m^{i-1}\\
0&0
\end{matrix}\right)+\left(\begin{matrix}
a^i&0\\
c^i&d^i
\end{matrix}\right)=\left(\begin{matrix}
s^{i+1}&0\\
-g^iw^i+c^i&-g^im^{i-1}+d^i
\end{matrix}\right).$$
Thus $\left(\begin{smallmatrix}
\cpx{a}& 0\\
\cpx{c}& \cpx{d}
\end{smallmatrix}\right)\sim\left(\begin{smallmatrix}
\cpx{s}[1]&0\\
-\g\cpx{w}+\cpx{c}&-\g(\cpx{m}[-1])+\cpx{d}
\end{smallmatrix}\right)$. So we may assume $\cpx{a}=\cpx{s}[1]$. By (2), $\cpx{d}\f\sim \g(\cpx{a}[-1])=\g\cpx{s}$. Thanks to $\cpx{d}-\cpx{r}\sim \g\h$, we have
$$\g\h\f\sim (\cpx{d}-\cpx{r})\f=\cpx{d}\f-\cpx{r}\f\sim \g\cpx{s}-\cpx{r}\f.$$

Conversely, assume that there is a morphism $\h: \Y\ra \U$ such that $\g\h\f\sim \g\cpx{s}-\cpx{r}\f.$ Let $\cpx{d}:=\cpx{r}+\g\h: \Y\ra \V$. Then $\cpx{d}\f=(\cpx{r}+\g\h)\f=\cpx{r}\f+\g\h\f\sim\g\cpx{s}$. Let $\cpx{c}$ be a homotopy for $\cpx{d}\f\sim\g\cpx{s}$. By (2),
$$\cpx{\phi}:=\left(\begin{matrix}
\cpx{s}[1]&0 \\
\cpx{c}[1]&\cpx{d}
\end{matrix}\right): \cone(\f)\lra \cone(\g)$$
 is a morphism of complexes. Clearly, $\pi(\g)\cpx{\phi}=(\cpx{s}[1]) \pi(\f)$. Since $\cpx{\phi} \theta(\f)=\left(\begin{smallmatrix}
0\\
\cpx{d}
\end{smallmatrix}\right)=\theta(\g) \cpx{d}$ and $\theta(\g) \g\sim 0$, we get
$$\cpx{\phi}\theta(\f)-\theta(\g) \cpx{r}=\theta(\g) \cpx{d}-\theta(\g) \cpx{r}=\theta(\g)(\cpx{d}-\cpx{r})= \theta(\g) \g\h\sim 0.$$

(4) Since each of $\cpx{\phi}$ and $\cpx{\psi}: \cone(\f)\ra \cone(\g)$ makes the two squares in $(\divideontimes)$ commute in $\Kc$, we have the following commutative diagram in $\Kc$:
\[\begin{array}{c}\xymatrix@R=1.5em{
\X\ar[r]^{\f}&\Y\ar[r]^-{\theta(\f)}\ar[d]^{0}& \cone(\f)\ar[r]^{\pi(\f)}
\ar[d]^{\cpx{\phi}-\cpx{\psi}}&\X[1]\ar[d]^{0} \\
\U\ar[r]^{\g}&\V\ar[r]^-{\theta(\g)}& \cone(\g)\ar[r]^{\pi(\g)}&\U[1].
}\end{array}\]
Thus the desired result follows from (3).

(5) By (3), there exists a morphism $\h: \Y\ra \U$ of complexes such that $\g\h\f\sim \g\cpx{s}-\cpx{r}\f$ with $\cpx{c}$ a homotopy, and
$$\cpx{\phi}=\left(\begin{matrix}
\cpx{s}[1]&0 \\
\cpx{c}[1]&\cpx{r}+\g\h
\end{matrix}\right): \cone(\f)\lra \cone(\g)$$
in $\Kc$. Then the diagram
 \[\begin{array}{c}\xymatrix@R=1.5em{
\X\ar[r]^{\f}\ar[d]^{\s}&\Y\ar[r]^-{\theta(\f)}\ar[d]^{\cpx{r}+\g\h}& \cone(\f)\ar[r]^{\pi(\f)}
\ar[d]^{\cpx{\phi}}&\X[1]\ar[d]^{\cpx{s}[1]} \\
\U\ar[r]^{\g}&\V\ar[r]^-{\theta(\g)}& \cone(\g)\ar[r]^{\pi(\g)}&\U[1]
}\end{array}\]
is commutative in $\Kc$. Since $\cpx{r}$ and $\cpx{s}$ are isomorphisms and $\g$ is a radical morphism in $\Kc$, both $\s$ and $\cpx{r}+\g\h$ are isomorphisms in $\Kc$. It follows from the above diagram that $\cpx{\phi}$ is an isomorphism in $\Kc$.
\end{proof}

\subsection{Commutative Frobenius algebras}\label{CFA}

Throughout this subsection, $A$ denotes a finite-dimensional local commutative Frobenius algebra over a field $k$, and $\omega:A\ra k$ is a Frobenius form of $A$.

Let $U, V$ be finite-dimensional $k$-spaces. Assume that there is a non-degenerate $k$-bilinear form $\lan -,-\ran: U\times V\ra k$. Let $W$ be a subspace of $U$. Define
$$W^{\perp}=\{v\in V \mid \lan w, v\ran =0, \forall w\in W\}.$$
Then ${}^{\perp}(W^{\perp})=W$.
Assume that there is another non-degenerate $k$-bilinear form $\lan -,-\ran': V\times U\ra k$ such that $\lan v, u\ran' = \alpha\lan u, v\ran$ for $u\in U, v\in V$, where $\alpha\in k \setminus \{0\}$ is a fixed element. Then $W^{\perp} = {}^{\perp}W$ with respect to the two bilinear forms, respectively. In this case, we just write $W^{\perp}$.

Let $X$ be a free $A$-module of finite rank and $f\in \End_A(X)$. We define $\tr(f)$ to be the trace of the matrix of $f$ with respect to a chosen basis of $X$. If $X=0$, then we define $\tr(f)=0$. Clearly, the  function $\tr$ is $k$-linear and  independent of the choice of basis. For two  free $A$-modules $X$ and $Y$ of finite rank, one deduces from the commutativity of $A$ that $\tr(fg)=\tr(gf)$ for all $f\in\Hom_A(X,Y)$ and $g\in\Hom_A(Y,X)$. Since $A$ is a local algebra, every finitely generated projective $A$-module is isomorphic to $A^m$ for some $m\geq 0$. Thus the function $\tr$ is defined for $\End_A(X)$ for every finitely generated projective $A$-module $X$.

 \medskip
Let $\X$ and $\Y$ be complexes in $\Kb{A}$. The Hom-complex $\Hom^{\bullet}(\X, \Y)$ of $\X$ and $\Y$ is defined as follows. For $n\in \mathbb{Z}$, its $n$-th term is
$$\Hom^n(\X, \Y):=\prod_{p\in \mathbb{Z}}\Hom_A(X^p, Y^{p+n}),$$
and the differential is given by
$$d^n:  \Hom^n(\X, \Y)\lra \Hom^{n+1}(\X, \Y),$$
$$ (f^p: X^p\ra Y^{p+n})_{p\in \mathbb{Z}}\longmapsto \big(d_Y^{n+p}f^p+(-1)^{n+1}f^{p+1}d_X^p: X^p\ra Y^{p+n+1}\big)_{p\in \mathbb{Z}}.$$
As $\X$ and $\Y$ are bounded complexes, $\Hom^n(\X, \Y)$ is a finite-dimensional $k$-space for all $n\in \mathbb{Z}$. For each $n\in \mathbb{Z}$, define the $n$-\emph{cocycle} and $n$-\emph{coboundary} modules of the Hom-complex  by
$$Z^n(\Hom^{\bullet}(\X, \Y)):=\ker(d^n),\;\;\; B^n(\Hom^{\bullet}(\X, \Y)):=\im(d^{n-1}),  \mbox{ respectively. }$$

The following result may be known. For the convenience of the reader, we include here a proof.
\begin{lemma}\label{form}
Keep the notation above, and let $n\in\mathbb{Z}$. Then
 $$ \lan -,-\ran: \Hom^n(\X, \Y)\times \Hom^{-n}(\Y, \X)   \lra  k, $$
$$(\f,\g)  \longmapsto \sum_{p\in \mathbb{Z}}(-1)^p\omega(\tr(f^{p-n}g^p)) \; \mbox{ for } \f\in  \prod_{p\in \mathbb{Z}}\Hom_A(X^p, Y^{p+n}), \, \g\in \prod_{p\in \mathbb{Z}}\Hom_A(Y^p, X^{p-n})$$
is a non-degenerate $k$-bilinear form, satisfying the following properties.

$(1)$ $\lan \f, \g \ran=(-1)^n\lan \g, \f \ran$ for  $\f\in \Hom^n(\X, \Y)$ and $\g\in \Hom^{-n}(\Y, \X)$.

$(2)$ $\lan d^{n-1}\cpx{h}, \cpx{z}\ran =(-1)^{n}\lan \cpx{h}, d^{-n}\cpx{z}\ran$ for $\cpx{h}\in \Hom^{n-1}(\X, \Y)$ and $\cpx{z}\in \Hom^{-n}(\Y, \X)$.

$(3)$ Let $(-)^\perp$ be the orthogonal subspace with respect to the bilinear form $\lan -,-\ran$. Then
 \[B^n(\Hom^{\bullet}(\X, \Y))^{\perp}=Z^{-n}(\Hom^{\bullet}(\Y, \X)) \mbox{ and }
 Z^{-n}(\Hom^{\bullet}(\Y, \X))^{\perp}=B^n(\Hom^{\bullet}(\X, \Y)).\]
\end{lemma}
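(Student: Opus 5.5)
The plan is to reduce everything to the module-level pairing $\Hom_A(P,Q)\times\Hom_A(Q,P)\to k$, $(f,g)\mapsto \omega(\tr(fg))$, for finitely generated projective (hence free) $A$-modules $P,Q$. First we show that this pairing is non-degenerate. Choose bases so that $P=A^m$ and $Q=A^l$. Then $f$ and $g$ become matrices over $A$, and
\[
\omega(\tr(fg))=\sum_{i,j}\omega(f_{ij}g_{ji}).
\]
Each pairing $A\times A\to k$, $(a,b)\mapsto\omega(ab)$, is non-degenerate because $\omega$ is a Frobenius form. The sum is therefore an orthogonal sum of non-degenerate forms on matrix entries, and so it is non-degenerate.

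The form $\lan-,-\ran$ on $\Hom^n(\X,\Y)\times\Hom^{-n}(\Y,\X)$ splits into a finite direct sum over $p$ of the pairings between $\Hom_A(X^{p-n},Y^p)$ and $\Hom_A(Y^p,X^{p-n})$, each multiplied by the sign $(-1)^p$. Hence it is non-degenerate.

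For (1), we compute directly. Using $\tr(fg)=\tr(gf)$,
\[
\lan \g,\f\ran=\sum_q(-1)^q\omega(\tr(g^{q+n}f^q)),
\]
where the indexing is shifted because $\g$ has degree $-n$. Substituting $q=p-n$ gives $\sum_p(-1)^{p-n}\omega(\tr(f^{p-n}g^p))=(-1)^n\lan\f,\g\ran$.

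For (2), we expand $d^{n-1}\cpx h$ and $d^{-n}\cpx z$ using the stated formula for the differential. This gives four families of terms of the form $\omega\tr(d\,h\,z)$ and $\omega\tr(h\,d\,z)$. Using cyclicity of the trace, each term on the left is matched with a term on the right. This sign bookkeeping is the main obstacle: one must shift the summation index by one and check that the resulting $(-1)^{p}$ versus $(-1)^{p+1}$ discrepancies, together with the signs $(-1)^{n}$ and $(-1)^{-n+1}$ coming from the differentials, produce exactly the global factor $(-1)^n$. I would write out the terms explicitly and carry the index shift through carefully.

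For (3), first let $\cpx z\in Z^{-n}$. By (2), $\lan d^{n-1}\cpx h,\cpx z\ran=\pm\lan \cpx h,d^{-n}\cpx z\ran=0$, so $Z^{-n}\subseteq (B^n)^\perp$. Conversely, suppose $\cpx z\in (B^n)^\perp$. Then $\lan\cpx h,d^{-n}\cpx z\ran=0$ for all $\cpx h\in\Hom^{n-1}(\X,\Y)$. The pairing between $\Hom^{n-1}(\X,\Y)$ and $\Hom^{-n+1}(\Y,\X)$ is non-degenerate, so $d^{-n}\cpx z=0$. This gives $(B^n)^\perp=Z^{-n}$.

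The second equality follows because all spaces are finite-dimensional and the form is non-degenerate, so ${}^\perp(W^\perp)=W$. Part (1) shows that the left and right orthogonals coincide, as noted before the lemma with $\alpha=(-1)^n$. Therefore $(Z^{-n})^\perp=(B^n)^{\perp\perp}=B^n$.
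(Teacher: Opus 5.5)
Your proposal is correct and follows the paper's proof essentially step for step. Non-degeneracy comes from the entrywise Frobenius pairing; the paper just makes this explicit with an elementary matrix $h$ satisfying $\omega(\tr(hg^p))\neq 0$. Part (1) comes from cyclicity of the trace and the reindexing $q=p-n$, and part (3) from (2), non-degeneracy in degree $n-1$, and ${}^{\perp}(W^{\perp})=W$.

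The sign check in (2) that you left pending works out as in the paper. The terms $d_Y^{p-1}h^{p-n}z^p$, after cyclic rotation and the shift $p\mapsto p+1$, carry the sign $(-1)^{p+1}=(-1)^n(-1)^p(-1)^{1-n}$. The terms $h^{p-n+1}d_X^{p-n}z^p$ match directly with the common factor $(-1)^n$.
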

\begin{proof}
Since $\X$ and $\Y$ are bounded,  the summation in $\lan \f, \g \ran$ is a finite sum. It is evidently $k$-bilinear. Now if $0\ne \g\in\Hom^{-n}(\Y,\X)$, then there is some $p\in\Z$ such that $0\ne g^p: Y^p\ra X^{p-n}$. We may assume  $Y^p\simeq A^r$ and $X^{p-n}\simeq A^s$ for positive integers $r,s$. Then $g^p$ can be viewed as an $s\times r$-matrix over $A$. Suppose that the $(i,j)$-entry $a$ of $g^p$ is nonzero. Since $\omega$ is a Frobenius form of $A$, there is $b\in A$ such that $\omega(ba)\neq 0$. Let $h$ be an $r\times s$-matrix over $A$ with $(j,i)$-entry $b$ and other entries zero. Then $\omega(\tr(hg^p))=\omega(ba)\neq 0$. The matrix $h$ can be viewed as an $A$-module homomorphism from $X^{p-n}$ to $Y^p$. Let $\f\in\Hom^n(\X,\Y)$ be such that $f^j=0$ for all $j\neq p-n$ and  $f^{p-n}=h$. Then $\lan \f, \g\ran=(-1)^p\omega(\tr(hg^p))\neq 0$. Similarly, for any $0\ne \f\in\Hom^n(\X,\Y),$ we can find $\g\in \Hom^{-n}(\Y,\X)$ such that $\lan\f,\g\ran\ne 0$. Therefore, the bilinear form $\lan-,-\ran$ is non-degenerate.

To see (1), let $\f\in\Hom^n(\X,\Y)$ and $\g\in\Hom^{-n}(\Y,\X)$. Setting $q=p-n$, we obtain
\[
\begin{aligned}
\lan\f,\g\ran
&=\sum_{p\in\mathbb Z}(-1)^p\omega\bigl(\tr(f^{p-n}g^p)\bigr) =\sum_{p\in\mathbb Z}(-1)^p\omega\bigl(\tr(g^pf^{p-n})\bigr)\\
&=\sum_{q\in\mathbb Z}(-1)^{q+n}\omega\bigl(\tr(g^{q+n}f^q)\bigr)
=(-1)^n\lan\g,\f\ran.
\end{aligned}
\]

We prove (2). Pick up $\cpx{h}\in \Hom^{n-1}(\X, \Y)$ and $\cpx{z}\in \Hom^{-n}(\Y, \X)$. By definition,
\[
(d^{n-1}\cpx{h})^{p-n}=d_Y^{p-1}h^{p-n}+(-1)^nh^{p-n+1}d_X^{p-n} \text{ and }
(d^{-n}\cpx{z})^p=d_X^{p-n}z^p+(-1)^{1-n}z^{p+1}d_Y^p.
\]
Consequently,
\[
\begin{aligned}
\lan d^{n-1}\cpx{h},\cpx{z}\ran
&=\sum_{p\in\mathbb Z}(-1)^p\omega\Bigl(\tr\bigl(
  (d_Y^{p-1}h^{p-n}+(-1)^nh^{p-n+1}d_X^{p-n})z^p\bigr)\Bigr)\\
&=\sum_{p\in\mathbb Z}(-1)^{p+1}\omega\bigl(\tr(h^{p-n+1}z^{p+1}d_Y^p)\bigr)
  +\sum_{p\in\mathbb Z}(-1)^{p+n}\omega\bigl(\tr(h^{p-n+1}d_X^{p-n}z^p)\bigr)\\
&=(-1)^n\sum_{p\in\mathbb Z}(-1)^p\omega\Bigl(\tr\bigl(
  h^{p-n+1}(d_X^{p-n}z^p+(-1)^{1-n}z^{p+1}d_Y^p)\bigr)\Bigr)\\
&=(-1)^n\lan\cpx{h},d^{-n}\cpx{z}\ran.
\end{aligned}
\]
Here, we have shifted the index by one in the second equality. This proves (2).

Finally, we show (3). Let $\cpx{z}\in\Hom^{-n}(\Y,\X)$. By (2), the element $\cpx{z}$ is orthogonal to $B^n(\Hom^{\bullet}(\X,\Y))$ if and only if $\lan\cpx{h},d^{-n}\cpx{z}\ran=0$ for every $\cpx{h}\in\Hom^{n-1}(\X,\Y)$. This is equivalent to saying that $d^{-n}\cpx{z}=0$ because $\lan-,-\ran$ is non-degenerate.  Hence $B^n(\Hom^{\bullet}(\X,\Y))^{\perp}=Z^{-n}(\Hom^{\bullet}(\Y,\X))$. Further, taking orthogonal complements, we get
\[B^n(\Hom^{\bullet}(\X,\Y))=\big(B^n(\Hom^{\bullet}(\X,\Y))^{\perp}\big)^{\perp} = Z^{-n}(\Hom^{\bullet}(\Y,\X))^{\perp}.\qedhere\]
\end{proof}

\section{A characterization of standard derived equivalences}\label{aut}
In this section, we study general properties of derived equivalences between finite-dimensional algebras and prove Theorem \ref{t0}.

Let $A$ and $B$ be finite-dimensional algebras over a common field, and let $F: \Db{A}\ra \Db{B}$ be a derived equivalence. It was proved in \cite{R} that  there exists a standard derived equivalence $\w{F}: \Db{A}\ra \Db{B}$ such that $\w{F}$ and $F$ agree on the subcategory $A\proj$ and $\w{F}(\X)\simeq F(\X)$ for every object $\X$ in $\Db{A}$. Determining whether $F$ is standard is equivalent to understanding the natural-isomorphism class of the difference $\w{F}^{-1}F: \Db{A}\ra \Db{A}$. Our aim is to find a particularly convenient representative of this class.

We proceed in two steps. Precisely speaking, a \emph{derived equivalence} $F: \Db{A}\ra \Db{B}$ is a pair $(F, \xi): \Db{A}\ra \Db{B}$ consisting of an equivalence functor $F$ and a connecting isomorphism $\xi: F\circ [1]\ra [1]\circ F$ (the definition of triangle functors is recalled below). First, we treat the connecting isomorphism. We show that, up to natural isomorphism of triangle functors, the connecting isomorphism can be taken as identity (see Proposition \ref{id}). Second, we consider a special class of triangles---the truncated triangles---and show that, up to natural isomorphism of triangle functors, the difference $\w{F}^{-1}F$ fixes all objects, subcategories $A\md[i]$ for $i\in \Z$, and truncated triangles. The two steps together will yield Theorem \ref{t0}.

\medskip
Let us start by recalling the definition of triangle functors.

Let $(\C, \Sigma)$ and $(\A, \Sigma')$ be triangulated categories with suspension functors $\Sigma$ and $\Sigma'$, respectively. A \emph{triangle functor} from $\C$ to $\A$ is a pair $(F,\phi)$ consisting of an additive functor $F:\C\ra\A$ and a natural isomorphism $\phi:F\Sigma\ra\Sigma'F$, such that triangles are preserved: whenever
$X\raf{f}Y\raf{g}Z\raf{h}\Sigma(X)$
is a triangle in $\C$, the sequence
$F(X)\lraf{F(f)}F(Y)\lraf{F(g)}F(Z)\lraf{\phi_XF(h)}\Sigma' (F(X))$
is a triangle in $\A$.   The natural isomorphism $\phi$ is called a \emph{connecting isomorphism} of $F$. If $F$ is an equivalence, then $(F,\phi)$ is called a \emph{triangle equivalence}.
Given another triangle functor $(G,\psi):\C\ra\A$, a \emph{natural transformation} $\mu:(F,\phi)\ra(G,\psi)$ of triangle functors is a natural transformation $\mu:F\ra G$ compatible with the connecting isomorphisms; explicitly, the following diagram must commute:
  \[\begin{array}{c}\xymatrix@R=1.5em{
F\Sigma\ar[r]^{\phi}\ar[d]_{\mu\, \Sigma} &\Sigma' F\ar[d]^{\Sigma' \mu}\\
G\Sigma\ar[r]^{\psi}&\Sigma' G.
}\end{array}\]

We next introduce a useful modification of a triangle functor. Suppose that $(F,\phi):\C\ra\A$ is a triangle functor. Let $\B$ be a full subcategory of $\C$, and choose a family of automorphisms
$$\mu_{\B}=\{\mu_{X}: F(X)\ra F(X)\;|\; X\in \B\}$$
in $\A$. Extend this family to all objects of $\C$ by setting $\mu_Z:=\id_{F(Z)}$ whenever $Z$ is not in $\B$. Define a functor $G:\C\ra\A$ by
$$G(X):=F(X),\quad G(f):=\mu_YF(f)\mu_X^{-1}$$
for every object $X$ and morphism $f:X\ra Y$ in $\C$. A natural isomorphism $\psi:G\Sigma\ra\Sigma'G$ of functors can be defined by
$$\psi_X=\Sigma'(\mu_X)\phi_X\mu_{\Sigma(X)}^{-1}:G(\Sigma(X))\lra\Sigma'(G(X)).$$
Then $(G,\psi)$ is a triangle functor and the family $\mu$ defines a natural isomorphism $(F,\phi)\ra(G,\psi)$ of triangle functors. We call $(G,\psi)$ the {\it conjugate functor} of $(F,\phi)$ by $\mu_{\B}$.

\medskip
We say that $\C$ is {\it $\Sigma$-free} if $\Sigma^i(X)\not\simeq X$ for any nonzero object $X$ of $\C$ and any $i\ne 0$. For an object $X$ in $\C$, we denote its isomorphism class by $[X]$. For a subcategory $\B$ of $\C$, we denote by $[\B]$ the collection of all isomorphism classes of objects of $\B$. We say that $\C$ is {\it essentially small} if $[\C]$ is a set.

Assume that $\C$ is essentially small. Then the group generated by $\Sigma$ acts on $[\C]$. A subset $\mathcal{U}$ of $[\C]$ is called a {\it $\Sigma$-section} of $[\C]$ if $\mathcal{U}$ consists of representatives of all orbits of this action on $[\C]$.

Let  $F: \C\ra \C$ be a triangle functor of $\C$. An object $X$ in $\C$ (respectively, a morphism $f: Y\to Z$ in $\C$) is \emph{fixed by $F$} if $F(X)=X$ (respectively, $F(f)=f$). We say that $F$ \emph{fixes} a full subcategory $\B$ (respectively, a triangle $\Delta$ or an ideal $\mathcal{I}$) of $\C$ if $F$ fixes all objects and morphisms in $\B$ (respectively, all objects and morphisms in $\Delta$ or all morphisms in $\mathcal{I}$).

\begin{lemma}\label{connect}
Let $(\C, \Sigma)$ and $(\A, \Sigma')$ be triangulated categories. Assume that $(\C, \Sigma)$ is essentially small and $\Sigma$-free.  Let $(F, \phi): \C\ra \A$ be a triangle functor such that $F(\Sigma(X))=\Sigma' (F(X))$ for every object $X$ of $\C$. Let $\mathcal{U}$ be a $\Sigma$-section of $[\C]$. Then there is a triangle functor $(F_{\mathcal{U}}, \id): \C\ra \A$ with respect to $\mathcal{U}$ such that $(F, \phi)\simeq (F_{\mathcal{U}}, \id)$ as triangle functors.

Moreover, let $\B$ be a full subcategory of $\C$ such that $[\B]\subseteq \mathcal{U}$. Assume that $(\C, \Sigma)=(\A, \Sigma')$ and $F$ fixes $\Sigma^i(\B)$ for all $i\in \Z$. Then $F_{\mathcal{U}}$ also fixes $\Sigma^i(\B)$ for all $i\in \Z$.
\end{lemma}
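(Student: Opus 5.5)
The plan is to conjugate $(F,\phi)$ by a suitable family of automorphisms, using the conjugate-functor construction introduced above, so that the connecting isomorphism becomes the identity. Since $\C$ is $\Sigma$-free, every nonzero object $Z$ of $\C$ is isomorphic to $\Sigma^i(U)$ for a unique $U$ with $[U]\in\mathcal{U}$ and a unique $i\in\Z$. For zero objects the connecting isomorphism is forced to be the identity anyway, since its source and target are zero.

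To make the indexing literal rather than up to isomorphism, I would proceed in three steps.

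First, for every object $Z$, fix an isomorphism $\alpha_Z: Z\ra \Sigma^{i}(U_Z)$, where $U_Z$ is the chosen representative in $\mathcal{U}$. Take $\alpha_Z=\id$ when $Z=\Sigma^i(U_Z)$ literally. To guarantee compatibility with $\Sigma$, define the data orbit-wise: for objects literally of the form $\Sigma^i(U)$, use the identity; for a general $Z$, transport along $\alpha_Z$. The cleaner route is to first replace $F$ on each orbit by its values on the literal objects $\Sigma^i(U)$. Since $F(\Sigma X)=\Sigma' F(X)$ literally, $\phi_X$ is an automorphism of $\Sigma'F(X)$.

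Second, define automorphisms $\mu_{\Sigma^i(U)}$ of $F(\Sigma^i U)=\Sigma'^i F(U)$ recursively, by setting $\mu_U=\id$ and
\[\mu_{\Sigma^{i+1}U}=\Sigma'(\mu_{\Sigma^iU})\,\phi_{\Sigma^iU}\quad (i\ge 0),\qquad \mu_{\Sigma^{i}U}=\Sigma'^{-1}\big(\mu_{\Sigma^{i+1}U}\,\phi_{\Sigma^{i}U}^{-1}\big)\quad (i<0).\]
These are exactly the equations $\psi_X=\Sigma'(\mu_X)\phi_X\mu_{\Sigma X}^{-1}=\id$ for $X=\Sigma^iU$. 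Here $\Sigma'$ is assumed to be an automorphism, as it is for homotopy and derived categories with the standard shift. For an arbitrary object $Z$ isomorphic to $\Sigma^iU$ but not equal to it, put $\mu_Z:=\mu_{\Sigma^iU}$ transported via $F(\alpha_Z)$. Equivalently, first conjugate by these transports, or more simply observe that $\psi_Z=\id$ reduces by naturality of $\phi$ to the literal case. $\Sigma$-freeness is what makes this recursion well defined: a given object never lies in two positions of the same orbit, so no consistency condition around a loop arises. Taking $(F_{\mathcal{U}},\id)$ to be the conjugate functor then gives, by the general remark on conjugate functors, a natural isomorphism $(F,\phi)\simeq(F_{\mathcal{U}},\id)$.

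Third, for the ``moreover'' part, take $X\in\B$, so $[X]\in\mathcal{U}$, and arrange that $X$ itself is the representative (literal choice). Because $F$ fixes $\Sigma^i(\B)$, it fixes every morphism of $\Sigma^i\B$, and this includes the relevant $\phi$ components. Indeed, for $\Sigma$-stable fixedness, $\phi_{\Sigma^iX}$ is determined, and I would argue it equals the identity. Compatibility with fixed morphisms alone does not force this. I would therefore interpret ``fixes $\Sigma^i(\B)$'' as including that $\phi$ is the identity there, or else show that the $\mu_{\Sigma^iX}$ commute with all morphisms in $\Sigma^i\B$. The point is that $F_{\mathcal{U}}(f)=\mu_Y f\mu_X^{-1}$ for $f$ in $\Sigma^i\B$, so I need $\mu$ to be identity or central on these objects. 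I expect this step to be the main obstacle. The plan is to check that, when $F$ is a pseudo-identity fixing $\Sigma^i\B$, the iterated automorphisms $\mu_{\Sigma^iX}$ built from $\phi$ are natural in $X\in\B$, because $\phi$ is natural and $F$ is the identity on these morphisms. Such naturality gives $\mu_Y\Sigma^i(f)=\Sigma^i(f)\mu_X$ for $f:X\ra Y$ in $\B$, whence $F_{\mathcal{U}}(\Sigma^if)=\Sigma^if$.
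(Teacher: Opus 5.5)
Your first part is the paper's own argument: the recursion $\mu_{\Sigma X}=\Sigma'(\mu_X)\phi_X$, started from $\mu=\id$ at level $0$, followed by conjugation. The gap is in how you extend $\mu$ to objects that are not literally of the form $\Sigma^i(U)$.

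Suppose you set $\mu_Z:=F(\alpha_Z)^{-1}\mu_W F(\alpha_Z)$ with $W=\Sigma^i(U)$, and grant yourself $\alpha_{\Sigma Z}=\Sigma(\alpha_Z)$. Naturality of $\phi$ gives $\phi_W F(\Sigma\alpha_Z)=\Sigma'F(\alpha_Z)\phi_Z$, and hence
\[\Sigma'(\mu_Z)\phi_Z=\Sigma'F(\alpha_Z)^{-1}\mu_{\Sigma W}F(\Sigma\alpha_Z),\qquad \mu_{\Sigma Z}=F(\Sigma\alpha_Z)^{-1}\mu_{\Sigma W}F(\Sigma\alpha_Z).\]
So $\psi_Z=\id$ holds if and only if $F(\Sigma\alpha_Z)=\Sigma'F(\alpha_Z)$. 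That is exactly the compatibility a nontrivial $\phi$ obstructs. Your claim that $\psi_Z=\id$ ``reduces by naturality of $\phi$ to the literal case'' is therefore false.

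The fix is to drop representatives altogether. The condition $\psi=\id$ only links $\mu_X$ with $\mu_{\Sigma X}$ along the literal orbit $\{\Sigma^jX\}$ of each object. $\Sigma$-freeness gives every nonzero $X$ a unique $n$ with $[X]\in\Sigma^n(\mathcal{U})$. So put $\mu_X=\id$ for every object whose class lies in $\mathcal{U}$, not just for chosen ones, and run your recursion along $X$'s own orbit. This is what the paper does, and no $\alpha_Z$ are needed.

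For the ``moreover'' part, your first two options are not available. Fixing the morphisms of $\Sigma^i(\B)$ says nothing about the components of $\phi$, and you may not strengthen the hypothesis to $\phi=\id$ there.

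Your third option (naturality of $\mu$) does work, but only once $\mu$ is defined object-wise. Naturality of $\phi$ together with $F(\Sigma^jf)=\Sigma^jf$ gives $\phi_{\Sigma^jY}\Sigma^{j+1}f=\Sigma^{j+1}f\,\phi_{\Sigma^jX}$. Induction on $j$ in both directions then yields $\mu_{\Sigma^jY}\Sigma^jf=\Sigma^jf\,\mu_{\Sigma^jX}$, and no pseudo-identity hypothesis is needed. This induction uses the relation $\mu_{\Sigma X}=\Sigma'(\mu_X)\phi_X$ at every object of $\Sigma^i(\B)$, which your transported $\mu$ need not satisfy. You also cannot ``arrange that $X$ itself is the representative'' for all $X\in\B$ at once when $\B$ contains isomorphic objects, as $A\md$ does.

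The paper's route here is shorter. Since $\mu=\id$ on every object of $\B$, you get $F_{\mathcal{U}}(f)=F(f)=f$ on $\B$. Because the connecting isomorphism of $F_{\mathcal{U}}$ is the identity, $F_{\mathcal{U}}\Sigma=\Sigma F_{\mathcal{U}}$. This propagates fixedness to $\Sigma^i(\B)$ directly for $i>0$, and for $i<0$ via faithfulness of $\Sigma$. It uses only that $F$ fixes $\B$ itself.
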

\begin{proof}
Since $\C$ is $\Sigma$-free, we know that, for every object $Z\ne 0$ in $\C$, there is a unique $n\in \mathbb{Z}$ such that $[Z]\in \Sigma^n(\mathcal{U}):=\{[\Sigma^{n}(Y)]\,|\,[Y]\in \mathcal{U}\}$.

For $n\le 0$ and any object $X$ in $\C$ with $[X]\in \Sigma^n(\mathcal{U})$, we construct an automorphism $\mu_X: F(X)\ra F(X)$.
This is done by induction on $n$.

For $n=0$, let $\mu_X=\id_{F(X)}$ for object $X$ with $[X]\in \mathcal{U}$. Assume that we have constructed $\mu_X$ for all objects $X$ with $[X]\in \Sigma^n(\mathcal{U})$. Now, let $X$ be an object of $\C$ such that $[X]\in \Sigma^{n-1}(\mathcal{U})$. Then $[\Sigma(X)]\in \Sigma^n(\mathcal{U})$. Let $\mu_X:=(\Sigma')^{-1}(\mu_{\Sigma(X)}\circ\phi_{X}^{-1})$. Then we have constructed $\mu_X$ for all objects $X$ with $[X]\in \bigcup_{-\infty<i\leq 0}\Sigma^i(\mathcal{U})$. Furthermore, we have the commutative diagram
  \[\begin{array}{c}\xymatrix{
F(\Sigma(X))\ar[r]^{\phi_{X}}\ar[d]_{\mu_{\Sigma(X)}} &\Sigma' (F(X))\ar[d]^{\Sigma'(\mu_X)}\\
F(\Sigma(X))\ar[r]^{\id}&\Sigma' (F(X)).
}\end{array}\]
 Similarly, we can construct $\mu_X$ for all $X\in \C$ with $[X]\in \bigcup_{0<i<+\infty}\Sigma^i(\mathcal{U})$. Denote the resulting family of automorphisms by $\mu_{\C}$. Let $(F_{\mathcal{U}}, \psi): \C\ra \A$ be the conjugate functor of $(F, \phi)$ by $\mu_{\C}$. By the construction of $\mu_{\C}$, we have $\psi=\id$.

Now we prove the second statement. We prove that $F_{\mathcal{U}}$ fixes $\Sigma^n(\B)$ by induction on $n\leq 0$. Since $[\B]\subseteq \mathcal{U}$ and we have chosen $\mu_X=\id_X$ for $X$ with $[X]\in \mathcal{U}$, we know clearly that $F_{\mathcal{U}}$ fixes $\Sigma^0(\B)$. Assume that $F_{\mathcal{U}}$ fixes $\Sigma^n(\B)$. Let $f: X\ra Y$ be a morphism in $\Sigma^{n-1}(\B)$. Then $ \Sigma (f)$ is a morphism in $\Sigma^n(\B)$, hence $F_{\mathcal{U}}(\Sigma (f))=\Sigma (f)$. Since $\id: F_{\mathcal{U}}\Sigma\ra \Sigma F_{\mathcal{U}}$ is a natural isomorphism, $\Sigma (F_{\mathcal{U}}(f))=F_{\mathcal{U}}( \Sigma (f))= \Sigma (f)$. So $F_{\mathcal{U}}$ fixes $f$.

For the case $\Sigma^n(\B)$ with $n>0$, the proof can be done similarly.
\end{proof}

\medskip
Let $A$ be an Artin algebra. Following \cite{CY}, a triangle functor $(F, \xi): \Db{A}\ra \Db{A}$ (respectively, $\Kb{A}\ra \Kb{A}$) is called a {\it pseudo-identity} if the following two conditions hold:
\begin{enumerate}
\item $F$ fixes the full subcategory $A\md[i]$ (respectively, $A\proj[i]$) for each $i\in \Z$;
\item $F$ fixes all objects of $\Db{A}$ (respectively, $\Kb{A}$).
\end{enumerate}

\medskip
The following result is due to \cite[Proposition 5.8]{CY}, motivated by \cite[Corollary 3.5]{R}. Note that the proof of (2) is similar to that of (1).

\begin{lemma}{\rm\cite{CY}}\label{group}
Suppose that $A$ and $B$ are finite-dimensional algebras over a common field.

$(1)$ Let $(F, \xi): \Db{A}\ra \Db{B}$ be a derived equivalence. Then there is a standard derived equivalence $(G, \psi): \Db{A}\ra \Db{B}$ and a pseudo-identity $(I, \phi): \Db{A}\ra \Db{A}$ such that $(F, \xi)\simeq(G, \psi)\circ(I, \phi)$ as triangle functors.

$(2)$ Let $(F, \xi): \Kb{A}\ra \Kb{B}$ be a derived equivalence. Then there is a standard derived equivalence $(G, \psi): \Kb{A}\ra \Kb{B}$ and a pseudo-identity $(I, \phi): \Kb{A}\ra \Kb{A}$ such that $(F, \xi)\simeq (G, \psi)\circ (I, \phi)$ as triangle functors.
\end{lemma}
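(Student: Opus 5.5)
Part (1) follows Rickard's construction, and part (2) is its analogue on $\Kb{A}$. For (1), the first step is to apply $F$ to $A$. The complex $T^\bullet:=F(A)$ is a tilting complex in $\Db{B}$. Replacing it by a bounded complex of projectives, we have $T^\bullet\in\Kb{B}$, and $F$ induces an algebra isomorphism $A\simeq\End_{\Db{B}}(T^\bullet)$. By Rickard's theory \cite{R} (Keller's lifting), there is a two-sided tilting complex $\Delta$ of $B$-$A$-bimodules whose underlying complex of left $B$-modules is isomorphic to $T^\bullet$ and whose right $A$-action realises this isomorphism. Set $G:=\Delta\OT_A-$, with its canonical connecting isomorphism $\psi$. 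Then $G$ is a standard derived equivalence. By construction, $G$ and $F$ agree on $A$ and on $\End(A)=A^{op}$ up to a fixed isomorphism $G(A)\simeq F(A)$.

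The second step produces the pseudo-identity. Put $(I_0,\phi_0):=(G,\psi)^{-1}\circ(F,\xi)$, using a quasi-inverse of $G$ (this is again standard). Then $I_0$ is a triangle auto-equivalence of $\Db{A}$ with $I_0(A)\simeq A$, and $I_0$ acts as the identity on $\End(A)$ after composing with this isomorphism. Additivity then gives that $I_0$ is isomorphic to the identity on $\add(A)=A\proj$. I would then use the standard argument that such an equivalence sends every $X$ to an object isomorphic to $X$. Indeed, objects of $\Db{A}$ are represented by bounded-above complexes of projectives with bounded homology, and the equivalence preserves the relevant Hom-functors $\Hom(P[i],-)$, which detect homology and module structure. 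In particular, $I_0$ restricts to an autoequivalence of $A\md[i]$ isomorphic to the identity on objects. More precisely, $I_0$ preserves the standard heart because $\Hom(A,X[i])=0$ for $i\neq0$ characterises modules, and on $A\md$ it is an exact autoequivalence fixing $A$ together with its endomorphisms. Hence it is isomorphic to the identity functor of $A\md$ by Eilenberg--Watts.

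The third step is a conjugation. I choose isomorphisms $\mu_X:I_0(X)\ra X$ for all objects $X$, selecting them on each $A\md[i]$ so that they assemble into the natural isomorphism with the identity found above. This conjugation turns $I_0$ into a functor $I$ that fixes all objects and fixes $A\md[i]$ for each $i$. The connecting isomorphism is transported to some $\phi$, exactly as in the construction of conjugate functors before Lemma \ref{connect}. Then $(F,\xi)\simeq(G,\psi)\circ(I,\phi)$, and $(I,\phi)$ is a pseudo-identity.

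The main obstacle is making the choices on the different shifts $A\md[i]$ consistent. The natural isomorphisms on $A\md[i]$ and on $A\md[j]$ have to be chosen independently, and this is allowed because these subcategories have no common nonzero objects; one must also check that the conjugation does not disturb the requirement that $\phi$ be a natural isomorphism, which holds automatically by the conjugate-functor construction. For (2) the same argument runs inside $\Kb{A}$, with $A\proj[i]$ replacing $A\md[i]$. There $I_0$ restricted to $A\proj$ is isomorphic to the identity directly from the first step, so Eilenberg--Watts is not needed.
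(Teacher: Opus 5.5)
Your architecture is the route behind \cite[Proposition 5.8]{CY} and \cite[Corollary 3.5]{R}, which the paper simply cites. You lift $F(A)$ to a two-sided tilting complex, pass to $I_0=G^{-1}F$, identify $I_0$ with the identity on each $A\md[i]$ via the heart and Eilenberg--Watts, and transport structure along isomorphisms $\mu_X\colon I_0(X)\to X$. Those steps are fine.

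\textbf{The gap.} It is the step you pass over fastest: that $I_0(X)\simeq X$ for \emph{every} object $X$. Without it the conjugation cannot produce a functor fixing all objects. This step is exactly the nontrivial half of Rickard's corollary, namely $\w{F}(\X)\simeq F(\X)$. Your justification does not prove it. Compatibility with $\Hom(A[i],-)$ only gives $H^i(I_0(X))\simeq H^i(X)$ as $A$-modules, and objects of $\Db{A}$ are not determined by their cohomology. For instance, over $k[x]/(x^2)$ the cone of a nonzero morphism $k\to k[2]$ (a nonzero element of $\Ext^2(k,k)$) has the same cohomology modules as $k[1]\oplus k[2]$, but it is not isomorphic to it. Your argument is valid only for objects whose cohomology is concentrated in a single degree, which is why your heart step is correct. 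So the real difficulty is not the consistency of choices on different $A\md[i]$, which is trivial, but this one. The same gap affects part (2).

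\textbf{How to repair it.} Either cite \cite[Corollary 3.5]{R} for the objectwise isomorphism, or prove it directly.

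First prove it in $\Kb{A}$, by induction on length as in Proposition \ref{truncated}.
\begin{enumerate}
\item Extend $\eta\colon I_0|_{A\proj}\simeq\id$ to all $A\proj[i]$ using $\phi_0$.
\item Build isomorphisms $\alpha_{\X}\colon I_0(\X)\to\X$ satisfying $\pi_L(\X)\alpha_{\X}=\eta\, I_0(\pi_L(\X))$.
\item If $\X$ starts in degree $m$, set $v:=\alpha_{{\X}^{\geq m+1}}I_0(\uu{d_X^m})\eta^{-1}$. By naturality of $\eta$ on $A\proj[-m-1]$, $v$ agrees with $\uu{d_X^m}$ after composing with $\pi_L({\X}^{\geq m+1})$.
\item Hence $v=\uu{d_X^m}$, because $\Hom_{\Kb{A}}(X^m[-m-1],{\X}^{\geq m+2})=0$.
\item Applying TR3 to $I_0(\Delta_L(\X))\to\Delta_L(\X)$ then gives $\alpha_{\X}$ with the same compatibility, which carries the induction forward.
\end{enumerate}

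For $\Db{A}$ one more step is needed. Write $\X\in\Ka$ as the cone of some $w\colon M[-N]\to{\X}^{\geq N}$ with $M$ a module. Then check that the transported $I_0(w)$ equals $w$. This holds because such morphisms are detected by precomposition with maps $A[-N]\to M[-N]$, provided your Eilenberg--Watts isomorphism on $A\md$ is chosen to extend $\eta$.
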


Derived equivalences have the following reduction.

\begin{prop}\label{id}
Suppose that $A$ and $B$ are finite-dimensional algebras over a common field. Then
every derived equivalence $(F, \xi): \Db{A}\ra \Db{B}$ (respectively, $\Kb{A}\ra \Kb{B}$) is naturally isomorphic to a derived equivalence
$(F', \id): \Db{A}\ra \Db{B}$ (respectively, $\Kb{A}$ $\ra \Kb{B}$) as triangle functors. Moreover, if $(F, \xi): \Db{A}\ra \Db{B}$
(respectively, $\Kb{A}\ra \Kb{B}$) is a pseudo-identity, then so is $(F', \id)$.
\end{prop}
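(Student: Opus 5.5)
Our plan is to reduce Proposition \ref{id} to Lemma \ref{connect}. That lemma needs three hypotheses: the source category is essentially small, it is $\Sigma$-free, and $F$ commutes strictly with the shift on objects. We will arrange the third by first replacing $F$ with a naturally isomorphic triangle functor.

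\textbf{Step 1: the source category satisfies the hypotheses.} The categories $\Db{A}$ and $\Kb{A}$ are essentially small, since $A$ is finite-dimensional. They are also $[1]$-free. For a nonzero bounded complex $\X$ (up to isomorphism), let $m$ be the largest degree with nonzero cohomology; in $\Kb{A}$ use instead a minimal complex, or the largest nonzero degree of a radical complex. This invariant is shifted by $-i$ under $[i]$. Hence $\X[i]\simeq\X$ forces $i=0$.

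\textbf{Step 2: make $F$ strictly compatible with the shift.} We have only $F(\X[1])\simeq F(\X)[1]$ via $\xi$, not equality, so we conjugate. Choose a $[1]$-section $\mathcal{U}$ of $[\Db{A}]$; in the pseudo-identity case, take $\mathcal{U}$ so that it contains the classes of $A\md$ (respectively $A\proj$). For every object $\X$ with $[\X]\in \mathcal{U}[n]$, pick the representative $\X_0$ with $\X\simeq \X_0[n]$, $[\X_0]\in\mathcal{U}$, and fix the isomorphism once and for all.

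Define a new functor $F_1$ on objects by $F_1(\X):=F(\X_0)[n]$ (with $F_1(0)=0$). On morphisms, transport $F$ along the composite isomorphisms
\[F(\X)\to F(\X_0[n])\to F(\X_0)[n],\]
built from $F$ of the chosen isomorphism and iterated components of $\xi$. This is a conjugation, in the spirit of the ``conjugate functor'' construction, though here the target objects change, so we use the obvious generalization with isomorphisms $\nu_\X:F(\X)\to F_1(\X)$. Then $(F_1,\xi_1)$ is a triangle functor isomorphic to $(F,\xi)$ and satisfies $F_1(\X[1])=F_1(\X)[1]$ on objects. The target being $\Db{B}$ poses no issue, since the $\Sigma$-freeness is only needed in the source.

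\textbf{Step 3: apply Lemma \ref{connect}.} The lemma now yields $(F',\id)\simeq(F_1,\xi_1)\simeq(F,\xi)$. It remains to treat the pseudo-identity case. If $F$ is a pseudo-identity, then $F(\X)=\X$ and $F(\X[1])=\X[1]$, so $F$ already commutes strictly with shifts on objects. We therefore skip Step 2 and apply Lemma \ref{connect} directly with $\B=A\md$ (respectively $A\proj$) and $[\B]\subseteq\mathcal{U}$. Its second statement gives that $F'$ fixes $\B[i]$ for all $i$. Conjugation leaves objects unchanged, so $F'$ still fixes all objects, and $(F',\id)$ is a pseudo-identity.

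\textbf{Main obstacle.} Choosing $\mathcal{U}$ to contain $[A\md]$ requires that no two modules, or more generally no module and a shift of another object of $\B$, lie in the same orbit. This holds because $M[i]\simeq N$ with $M,N$ modules forces $i=0$. A second point needing care is verifying that $\xi_1$ is natural and compatible with triangles in Step 2; this should follow from the routine transport-of-structure argument.
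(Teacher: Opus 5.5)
Your proposal is correct. In the pseudo-identity case it is the paper's argument. The paper also applies Lemma \ref{connect} directly with $\B=A\md$; the only difference is that it writes down the $[1]$-section explicitly as $\{[0]\}\cup\{[\X]\mid H^0(\X)\neq 0,\ H^i(\X)=0 \text{ for } i<0\}$, whereas you extend $[A\md]$ to a section abstractly.

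For a general derived equivalence the paper takes a different route:
\begin{enumerate}
\item It invokes Lemma \ref{group}(1) to write $(F,\xi)\simeq(G,\psi)\circ(I,\phi)$ with $G$ standard and $I$ a pseudo-identity.
\item It replaces $(I,\phi)$ by $(I',\id)$ using the pseudo-identity case.
\item It replaces $(G,\psi)$ by a derived tensor functor $(\X\OT_A-,\id)$, on the grounds that such functors commute with the shift.
\item It composes the two.
\end{enumerate}

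Your Step 2 instead makes $F$ commute strictly with $[1]$ on objects by transport of structure along isomorphisms $\nu_{\X}:F(\X)\to F(\X_0)[n]$, with $\xi_{1,\X}:=\nu_{\X}[1]\,\xi_{\X}\,\nu_{\X[1]}^{-1}$. You then apply Lemma \ref{connect} to $(F_1,\xi_1)$.

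What your route buys:
\begin{itemize}
\item It is self-contained.
\item It works verbatim for any triangle functor, not necessarily an equivalence, out of an essentially small $[1]$-free triangulated category.
\item It avoids the Chen--Ye decomposition.
\item It avoids the claim that $\X\OT_A-$ commutes with $[1]$ on the nose with identity connecting isomorphism. That claim quietly depends on sign conventions for total complexes and on choosing resolutions compatibly with the shift.
\end{itemize}
What the paper's route buys: Lemma \ref{group} is needed for Theorem \ref{t0} anyway, so given it the general case is a two-line consequence of the pseudo-identity case.

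One detail should be made explicit in your Step 2. You must fix a single representative object $\X_0$ for each class in $\mathcal U$, not one per object $\X$, and send every zero object to the zero complex. Only then does $F_1(\X[1])=F_1(\X)[1]$ hold strictly, which is exactly the hypothesis Lemma \ref{connect} requires.
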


\begin{proof}
Assume that $(F, \xi): \Db{A}\ra \Db{A}$ is a pseudo-identity. Clearly, $F(\X[1])=F(\X)[1]$ for every object $\X$ of $\Db{A}$. Note that $\Db{A}$ is an essentially small category and $[1]$-free. Consider $\mathcal{U}:=\{[0]\}\cup \{[\X]\in [\Db{A}]\,|\, H^0(\X)\neq 0 \text{ and } H^i(\X)=0,\, \forall i<0\}$, where $H^i$ is the $i$-th cohomology functor. Clearly, $\mathcal{U}$ is a $[1]$-section of $[\Db{A}]$. Let $\B:=A\md$. Then $[\B]\subseteq \mathcal{U}$. It follows from Lemma \ref{connect} that $(F, \xi)$ is naturally isomorphic to $(F_{\mathcal{U}}, \id)$ and $F_{\mathcal{U}}$ fixes $A\md[i]$ for all $i\in\mathbb{Z}$. By definition, $F_{\mathcal{U}}(\X)=F(\X)=\X$ for $\X\in \Db{A}$. Thus $(F_{\mathcal{U}}, \id)$ is a pseudo-identity. Let $F':=F_{\mathcal{U}}$. Then $(F',\id)$ is a desired derived equivalence.

Every standard equivalence $(G, \psi):\Db{A}\ra\Db{B}$ is naturally isomorphic to a derived tensor functor $G':=\X\OT_A-$ for some complex $\X$ of $B$-$A$-bimodules. Since every derived tensor functor commutes with the shift functor, we have $(G, \psi)\simeq (G', \id)$ as triangle functors.

It follows from Lemma \ref{group}(1) that, for every derived equivalence $(F, \xi): \Db{A}\ra \Db{B}$, there is a triangle functor $(F',\id)$ such that $(F,\xi)\simeq (F', \id): \Db{A}\ra \Db{B}$ as triangle functors.

The proof for the derived equivalence $(F,\xi): \Kb{A}\ra \Kb{B}$ can be done similarly.
\end{proof}

Let $\X$ be a complex. For each $i\in \Z$, we define a truncated complex ${\X}^{\geq i}$ of $\X$ by
$${\X}^{\geq i}:\;\; \cdots \lra 0\lra X^i\lraf{d_X^i} X^{i+1}\lraf{d_X^{i+1}} X^{i+2}\lra \cdots $$
with $X^i$ in degree $i$. Similarly, we define ${\X}^{\leq i}$, and ${\X}^{[u, v]}=({\X}^{\geq u})^{\bullet\leq v}$ for integers $u$ and $v$ with $u\leq v$.

Let $\f: \X\ra\Y$ be a morphism of complexes. We define a morphism ${\cpx{f}}^{\geq i}: {\X}^{\geq i}\ra {\Y}^{\geq i}$ of complexes by $({\cpx{f}}^{\geq i})^j:=f^j$ for all $j\geq i$. Similarly, we define ${\cpx{f}}^{\leq i}: {\X}^{\leq i}\ra {\Y}^{\leq i}$ and ${\cpx{f}}^{[u, v]}: {\X}^{[u, v]}\ra {\Y}^{[u, v]}$ for $u\leq v$.

\begin{defn}\label{def-truncated}
Let $A$ be an Artin algebra, and let $\X$ be a nonzero complex in $\Kb{A}$. The {\it truncated triangle of $\X$ in degree $i$} is defined to be the standard triangle
$$\Delta_i(\X):\;\; {\X}^{\leq i}[-1]\lraf{\uu{d_X^i}} {\X}^{\geq i+1}\lraf{\theta(\uu{d_X^i})} \X\lraf{\pi(\uu{d_X^i})} {\X}^{\leq i}$$
in $\Kb{A}$, where $\uu{d_X^i}$ is the morphism with the $(i+1)$-component being $d_X^i$.

We write $\Delta_L(\X)$ for $\Delta_m(\X)$, and $\Delta_R(\X)$ for $\Delta_{n-1}(\X)$, where $m$ is the smallest integer such that $X^m\neq 0$ and $n$ is the largest integer such that $X^n\neq 0$. In this case, $n-m$ is called the {\em length} $\ell(\X)$ of $\X$, while $\Delta_L(\X)$ and $\Delta_R(\X)$ are called the \emph{left and right} truncated triangles of $\X$, respectively.
For convenience, we write $\theta_L(\X):=\theta(\uu{d_X^m}), \pi_L(\X):=\pi(\uu{d_X^m}), \theta_R(\X):=\theta(\uu{d_X^{n-1}})$ and $\pi_R(\X):=\pi(\uu{d_X^{n-1}})$.
\end{defn}

For our later considerations, we introduce the following ideals and subcategories of $\Kb{A}$ for an Artin algebra $A$. Let $i$ be an integer and $n$ a non-negative integer.
\begin{enumerate}
  \item $\Zmap_i(A)$ is the ideal of $\Kb{A}$ consisting of homotopy classes of all morphisms $\f:\X\ra\Y$ of complexes such that $f^t=0$ for all $t\neq i$ and $f^i$ factorizes through $d_Y^{i-1}$;
  \item $\K^{\ell\leq n}(A\proj)$ is the full subcategory of $\Kb{A}$ consisting of all complexes of length at most $n$; and
  \item $\K^{0,n}(A\proj)$ is the full subcategory of $\Kb{A}$ consisting of all complexes of length $n$ whose nonzero terms are concentrated in degrees $0,1,\ldots,n$.
\end{enumerate}

Let $\B$ be a full subcategory of $\Kb{A}$ closed under the shift functor $[1]$. A family of automorphisms $\lambda_{\B}=\{\lambda_{\X}:\X\ra \X\;|\; \X\in \B\}$ in $\Kb{A}$ is {\it $[1]$-compatible} if $\lambda_{\X[1]}=\lambda_{\X}[1]$ for all objects $\X$ in $\B$. A crucial property we shall use frequently is that the conjugate functor $(F', \phi)$ of a pseudo-identity
$(F, \id): \Kb{A}\ra \Kb{A}$ by a $[1]$-compatible family $\lambda_{\B}$ also satisfies $\phi=\id: F'[1]\ra [1]F'$.

\begin{prop}\label{truncated}
Let $A$ be a finite-dimensional algebra, and let $(F, \phi): \Kb{A}\ra \Kb{A}$ be a pseudo-identity. Then there is a pseudo-identity $(G, \id): \Kb{A}\ra \Kb{A}$ such that $(F, \phi)\simeq (G, \id)$ as triangle functors and $G$ fixes all truncated triangles in $\Kb{A}$.
\end{prop}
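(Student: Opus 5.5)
The plan is to reduce to a pseudo-identity with trivial connecting isomorphism, and then conjugate it length by length so that more and more truncated triangles become fixed. First, by Proposition \ref{id} we may replace $(F,\phi)$ by a pseudo-identity $(F,\id)$. From then on every modification will be a conjugation by a $[1]$-compatible family of automorphisms. By the remark preceding the proposition, such a conjugation keeps the connecting isomorphism equal to $\id$. Since we only conjugate by automorphisms of objects, $F$ continues to fix all objects. If in addition we take $\lambda_{\X}=\id$ for $\X\in A\proj[i]$, then $F$ continues to fix $A\proj[i]$. So it suffices to construct, by induction on $n\geq 0$, pseudo-identities $(F_n,\id)\simeq (F,\id)$ such that:
\begin{itemize}
\item $F_n$ fixes all truncated triangles $\Delta_i(\X)$ with $\ell(\X)\leq n$;
\item $F_n$ agrees with $F_{n-1}$ on $\K^{\ell\leq n-1}(A\proj)$.
\end{itemize}
Since every complex in $\Kb{A}$ has finite length, the $F_n$ stabilize on each object and morphism. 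We then glue the families: the automorphism assigned to a complex of length $n$ is the one chosen at stage $n$. This produces a single $(G,\id)$ naturally isomorphic to $(F,\phi)$.

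For $n=0$ there is nothing to do, since truncated triangles of stalk complexes have zero terms and $F$ fixes $A\proj[i]$. For the inductive step, fix a complex $\X$ of length $n\geq 1$. Up to shift we may assume $\X\in\K^{0,n}(A\proj)$, so it is enough to work on this subcategory and extend $[1]$-compatibly. Consider $\Delta_L(\X)$, namely
$$X^0[-1]\lraf{\uu{d_X^0}}\X^{\geq 1}\lra \X\lra X^0.$$
Its outer objects have length $0$ and $n-1$, so they are fixed. The key observation concerns the morphism $\uu{d_X^0}$. Composing it with $\pi_L(\X^{\geq 1}):\X^{\geq 1}\ra X^1[-1]$ gives the map of stalk complexes induced by $d_X^0$, and $F$ fixes this map. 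The triangle $\Delta_L(\X^{\geq 1})$ is fixed by induction. Hence the difference $F(\uu{d_X^0})-\uu{d_X^0}$ is annihilated by $\pi_L(\X^{\geq1})$, and therefore factors through $\theta_L(\X^{\geq 1})$, i.e. through $\X^{\geq 2}$. Using this factorization together with Lemma \ref{st}(3),(5), I would produce automorphisms of $\X^{\geq 1}$ and of $\X$ that correct the image of the triangle. Concretely, apply $F$ to $\Delta_L(\X)$; this gives a triangle with the same objects. Compare it with $\Delta_L(\X)$ via a morphism of triangles with identity on the outer terms, or with $\id+$(a radical correction). Lemma \ref{st}(5) guarantees that the induced map on the cones is an isomorphism, and we take $\lambda_{\X}$ to be it.

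The remaining step, and the one I expect to be the main obstacle, is to make these choices coherent. The complex $\X^{\geq 1}$ has length $n-1$ and must not be conjugated again, or the earlier stages would be destroyed. So the correction has to be absorbed entirely into $\lambda_{\X}$. This requires showing that the discrepancy $F(\uu{d_X^0})-\uu{d_X^0}$ can actually be killed, or shown to be zero, using only automorphisms of $\X$. The plan for this is to exploit that $F$ is a functor: it fixes morphisms between complexes of smaller length, and these generate the relevant Hom spaces, so naturality should force the discrepancy to vanish up to an automorphism of $\X$.

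A second issue is that one automorphism $\lambda_{\X}$ must simultaneously fix all $\Delta_i(\X)$, $0\leq i\leq n-1$, not only $\Delta_L(\X)$. I would first fix $\Delta_L(\X)$ by the choice of $\lambda_{\X}$. Then I would deduce that $\Delta_i(\X)$ is fixed for the other $i$ from the octahedral-type compatibility of the truncations ${\X}^{[u,v]}$, all of which have smaller length and are already fixed. Finally, one must check that the conjugate functor fixes the morphisms of $\Delta_i(\X)$ whose source is $\X$ as well as those with target $\X$. Since $\lambda_{\X}$ is chosen as a morphism of triangles, both squares commute; this should follow from the construction, possibly after adjusting $\lambda_{\X}$ by an automorphism of the form $\id+$(element of $\Zmap_i(A)$).
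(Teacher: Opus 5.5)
Your overall architecture matches the paper's: reduce to connecting isomorphism $\id$ via Proposition \ref{id}, induct on length with $[1]$-compatible conjugations that are trivial on shorter complexes, and begin with $\Delta_L(\X)$ for $\X\in\K^{0,n}(A\proj)$ by showing that $F(\uu{d_X^0})-\uu{d_X^0}$ factors through $\theta_L({\X}^{\geq 1})\colon{\X}^{\geq 2}\to{\X}^{\geq 1}$. At that point you are one line from done, but you turn the wrong way. The complex $X^0[-1]$ is a stalk complex in degree $1$, and ${\X}^{\geq 2}$ lives in degrees $\geq 2$. Hence $\Hom_{\Kb{A}}(X^0[-1],{\X}^{\geq 2})=0$, so the discrepancy is simply zero. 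TR3 then gives $\lambda^L_{\X}$ as the third map of a morphism $F(\Delta_L(\X))\to\Delta_L(\X)$ with identities elsewhere. The ``main obstacle'' you flag never arises, and no automorphism of ${\X}^{\geq 1}$ is needed. Your proposed substitute rests on the claim that $F$ ``fixes morphisms between complexes of smaller length''. That is not your inductive hypothesis, which concerns only truncated triangles, and it is false in general. Lemma \ref{coro} only gives $F(\f)=\f+\g$ with $\g\in\sum_j\Zmap_j(A)$. The counterexamples of Section \ref{example} arise precisely because a pseudo-identity fixing all truncated triangles can move morphisms between short complexes. So as written this step is unproved.

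Second, fixing $\Delta_L(\X)$ does not yield the other truncated triangles of $\X$ by octahedral compatibility, because nothing so far controls $\pi_R(\X)\colon\X\to{\X}^{\leq n-1}$. Apply Lemma \ref{st}(4) to the morphism of left truncated triangles $\Delta_L(\X)\to\Delta_L({\X}^{\leq n-1})$, with components $1$, $\pi_R({\X}^{\geq 1})$ and $1$. This only gives $F^L_n(\pi_R(\X))=\pi_R(\X)+\cpx{\phi}$ with $\cpx{\phi}\in\Zmap_1(A)(\X,{\X}^{\leq n-1})$. Nothing forces $\cpx{\phi}=0$; this group is nonzero in general. The paper removes $\cpx{\phi}$ by a second conjugation with $\mu_{\X}=\id_{\X}+\cpx{\psi}$, where $\cpx{\psi}\colon\X\to\X$ has as its only nonzero component the degree-$1$ component of $\cpx{\phi}$. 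The real work is checking four identities: $\cpx{\phi}=\pi_R(\X)\cpx{\psi}$, $\cpx{\psi}\theta_R(\X)=0$, $\cpx{\psi}\theta_L(\X)=0$ and $\pi_L(\X)\cpx{\psi}=0$. Together these make $\Delta_R(\X)$ fixed while $\Delta_L(\X)$ stays fixed, with the single automorphism $\mu_{\X}\lambda^L_{\X}$. Your hedge about adjusting by $\id+{}$(an element of $\Zmap_i(A)$) points this way but supplies none of it. Only once $\Delta_L$ and $\Delta_R$ are fixed for every complex does the paper obtain the middle $\Delta_t(\X)$. There, $\uu{d_X^t}$ is fixed by two Hom-vanishing steps, through $\pi_R({\X}^{\leq t}[-1])$ and then $\theta_L({\X}^{\geq t+1})$. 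Moreover $\theta(\uu{d_X^t})$ is a composite of $\theta_L$'s and $\pi(\uu{d_X^t})$ is a composite of $\pi_R$'s. This is exactly why $\Delta_R$ cannot be skipped.
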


\begin{proof}
By Proposition \ref{id}, we may assume $\phi=\id$.
We will construct, by induction on the length $n$ of complexes, a $[1]$-compatible family of automorphisms  by which the conjugate functor $(G,\id)$ of $(F,\phi)$ is a pseudo-identity and fixes both left and right truncated triangles.

Let $n=0$. For any complex $\X$ in $\K^{\ell\leq 0}(A\proj)$, we define $\lambda_{\X}=\id_{\X}$ and denote by $\lambda_{\K^{\ell\leq 0}(A\proj)}$ the family of these automorphisms. By the construction, the conjugate functor $(F_0,\id)$ of $(F, \id)$ by $\lambda_{\K^{\ell\leq 0}(A\proj)}$ is just $(F, \id)$, hence it is a pseudo-identity. Since complexes in $\K^{\ell\leq 0}(A\proj)$ are direct sums of shifts of stalk complexes, it is clear that $(F_0,\id)$ fixes the left and right truncated triangles of complexes in $\K^{\ell\leq 0}(A\proj)$.

Suppose that $n > 0$ and a $[1]$-compatible family $\lambda_{\K^{\ell\leq n-1}(A\proj)}$ has been constructed on objects of length at most $n-1$ such that the conjugate functor $(F_{n-1},\id)$ of $(F_0,\id)$ by this family is a pseudo-identity and fixes both left and right truncated triangles of complexes with length at most $n-1$. Now, we extend this family to a family which contains automorphisms of   complexes of length $n$ in $\Kb{A}$ as follows.

Let $\K^{\ell=n}(A\proj)$ be the full subcategory of $\Kb{A}$ consisting of all complexes of length $n$. We first start with a complex $\cpx{X}$ in $\K^{0,n}(A\proj)$ and consider the left truncated triangle of $\cpx{X}$. By induction hypothesis, both $\pi_L({\X}^{\geq 1})\circ \uu{d_X^0}:$
\[X^0[-1]\lraf{\uu{d_X^0}}{\X}^{\geq 1}\lraf{\pi_L({\X}^{\geq 1})}X^1[-1]\]
and $\pi_L({\X}^{\geq 1})$ are fixed by $F_{n-1}$ since $F_{n-1}$ is a pseudo-identity and ${\X}^{\geq 1}$ has length less than $n$. This implies that $F_{n-1}(\uu{d_X^0})-\uu{d_X^0}: X^0[-1]\ra {\X}^{\geq 1}$ factorizes through $\theta_L({\X}^{\geq 1}): {\X}^{\geq 2}\ra {\X}^{\geq 1}$. It follows from $\Hom_{\Kb{A}}(X^0[-1],{\X}^{\geq 2})=0$  that $F_{n-1}(\uu{d_X^0})-\uu{d_X^0}=0$, that is, $F_{n-1}$ also fixes $\uu{d_X^0}$.
Thus there is an automorphism $\lambda_{\X}^L: \X\ra \X$ in $\Kb{A}$ such that the following diagram commutes:
  \[\xymatrix{
F_{n-1}(\Delta_L(\X)): &X^0[-1]\ar[r]^{\uu{d_X^0}}\ar[d]^{1}&{\X}^{\geq 1}\ar[d]^{1}\ar[rr]^-{F_{n-1}(\theta_L(\X))} &&\X\ar@{..>}[d]^{\lambda_{\X}^L}\ar[rr]^{F_{n-1}(\pi_L(\X))}&&X^0\ar[d]^{1} \\
\Delta_L(\X): &X^0[-1]\ar[r]^{\uu{d_X^0}}&{\X}^{\geq 1}\ar[rr]^-{\theta_L(\X)} &&\X\ar[rr]^{\pi_L(\X)}&&X^0.}\]
in $\Kb{A}$.

For an arbitrary complex $\Y$ in $\Kb{A}$ of length $n$, there is a unique integer $i$ such that $\Y[i]$ lies in $\K^{0,n}(A\proj)$. Then we define $\lambda_{\Y}^L=\lambda_{\Y[i]}^L[-i]$ and get a $[1]$-compatible family $\lambda^L_{\K^{\ell=n}(A\proj)}$ of automorphisms in $\K^{\ell=n}(A\proj)$.  The conjugate functor $(F^L_n, \id):\Kb{A}\to \Kb{A}$ of $(F_{n-1}, \id)$ by $\lambda^L_{\K^{\ell=n}(A\proj)}$ is a pseudo-identity, fixing both left and right truncated triangles of complexes with length at most $n-1$. Moreover, the above diagram implies that $F^L_n$ fixes $\Delta_L(\X)$ for all complexes $\X$ in $\K^{0,n}(A\proj)$. Thanks to $F_n^L[1]=[1]F_n^L$, we know that $F^L_n$ fixes $\Delta_L(\X)$ for all complexes $\X$ in $\K^{\ell=n}(A\proj)$.

\smallskip
We now further conjugate $F_n^L$ by another $[1]$-compatible family of automorphisms of complexes of length $n$.  Let $\X$ be an object in $\K^{0,n}(A\proj)$. First, we claim that
$$(\star)\;\;\;F^L_n(\uu{d_X^{n-1}})=\uu{d_X^{n-1}},\;\;F^L_n(\theta_R(\X))=\theta_R(\X)\;\;\text{and}\;\;  F^L_n(\pi_R(\X))=\pi_R(\X)+\cpx{\phi}$$
for some $\cpx{\phi}\in \Zmap_1(A)(\X, {\X}^{\leq n-1})$, where $\uu{d_X^{n-1}}: {\X}^{\leq n-1}[-1]\ra X^n[-n]$ is the defined morphism. Observe that
$$\uu{d_X^{n-1}}\theta_R({\X}^{\leq n-1}[-1])=d_X^{n-1}[-n]: X^{n-1}[-n]\lra X^n[-n].$$
Since $F^L_n$ fixes both $\uu{d_X^{n-1}}\theta_R({\X}^{\leq n-1}[-1])$ and $\theta_R({\X}^{\leq n-1}[-1])$, it follows that
\[(F_n^L(\uu{d_X^{n-1}})-\uu{d_X^{n-1}})\circ \theta_R({\X}^{\leq n-1}[-1])=0.\]
Hence $F_n^L(\uu{d_X^{n-1}})-\uu{d_X^{n-1}}$ factorizes through $\pi_R({\X}^{\leq n-1}[-1]): {\X}^{\leq n-1}[-1]\ra {\X}^{\leq n-2}[-1]$. However $\Hom_{\Kb{A}}({\X}^{\leq n-2}[-1], X^n[-n])=0$. Therefore $F_n^L(\uu{d_X^{n-1}})-\uu{d_X^{n-1}}=0$, and $F_n^L$ fixes $\uu{d_X^{n-1}}$. This shows the first equality of $(\star)$. Since
$$\theta_R(\X)=\theta_L({\X}^{\geq 0})\theta_L({\X}^{\geq 1})\cdots \theta_L({\X}^{\geq n-2})\theta_L({\X}^{\geq n-1})$$
and $F^L_n$ fixes $\theta_L({\X}^{\geq i})$ for all $0\leq i\leq n-1$, the second equality of $(\star)$ follows.

To prove the last equality in $(\star)$, we consider the morphism of left truncated triangles
  \[\xymatrix{
\Delta_L(\X): &X^0[-1]\ar[rr]^-{\uu{d_X^0}}\ar[d]^1&&{\X}^{\geq 1}\ar[d]^{\pi_R({\X}^{\geq 1})}\ar[rr]^-{\theta_L(\X)} &&\X\ar[d]^{\pi_R(\X)}\ar[rr]^{\pi_L(\X)}&&X^0\ar[d]^1 \\
\Delta_L({\X}^{\leq n-1}): &X^0[-1]\ar[rr]^-{\uu{d_{{\X}^{\leq n-1}}^0}}&&{\X}^{[1, n-1]}\ar[rr]^-{\theta_L({\X}^{\leq n-1})} &&{\X}^{\leq n-1}\ar[rr]^{\pi_L({\X}^{\leq n-1})}&&X^0.}\]
Note that $F^L_n$ fixes $\Delta_L(\X)$, $\Delta_L({\X}^{\leq n-1})$ and $\pi_R({\X}^{\geq 1})$. Applying $F^L_n$ to the above diagram and using Lemma \ref{st}(4), we obtain the last equality of $(\star)$ for some $\cpx{\phi}\in \Zmap_1(A)(\X, {\X}^{\leq n-1})$. This finishes the proof of $(\star)$.

By the definition of $\Zmap_1(A)$, we can represent the morphism $\cpx{\phi}$ so that $\phi^i=0$ unless $i=1$ and that there is a module homomorphism $h: X^1\ra X^0$ such that $\phi^1=d_{X^{\leq n-1}}^0h$ and  $d_{X^{\leq n-1}}^0hd_X^0=0$. Note that  $d_{X^{\leq n-1}}^0=0$ if $n=1$.

We define $\cpx{\psi}:\cpx{X}\ra \cpx{X}$ by $\psi^1=d_{X^{\leq n-1}}^0h$ and $\psi^i=0$ for all $i\neq 1$. Then $(\cpx{\psi})^2=0$, and $\mu_{\cpx{X}}=\id_{\X}+\cpx{\psi}$ is an automorphism with $\mu_{\cpx{X}}^{-1}=\id_{\X}-\cpx{\psi}$.
It is straightforward to check that $\cpx{\phi}=\pi_R(\cpx{X})\cpx{\psi}$ and $\cpx{\psi}\theta_R(\cpx{X})=0$.

Let $\Y$ be a complex in $\K^b(A\proj)$ of length $n$, and let $i$ be the unique integer such that $\Y[i]$ is in $\K^{0,n}(A\proj)$. Then we define $\mu_{\Y}=\mu_{\Y[i]}[-i]$, and get a $[1]$-compatible family of automorphisms in $\K^{\ell=n}(A\proj)$.
Let $(F_n,\id)$ be the conjugate functor of $(F_n^L,\id)$ by this $[1]$-compatible family. Clearly, $(F_n, \id)$ is a pseudo-identity, fixing both left and right truncated triangles of complexes with length at most $n-1$.

By the definition of conjugate functors, together with  the equalities in $(\star)$, we get
\[\begin{aligned}
  F_n(\pi_R(\X)) & =\id_{X^{\leq n-1}}F_{n}^L(\pi_R(\X))\mu_{\X}^{-1}
     =(\pi_R(\X)+\cpx{\phi})(\id_{\X}-\cpx{\psi})\\
    & =(\pi_R(\X)+\pi_R(\X)\cpx{\psi})(\id_{\X}-\cpx{\psi})=\pi_R(\X).
\end{aligned}\]
Similarly, we have
\[F_n(\theta_R(\X))  =\mu_{\X}\, F_{n}^L(\theta_R(\X))\,\id_{X^{\geq n}}^{-1}
   =\mu_{\X}\,\theta_R(\X)=(\id_{\X}+\cpx{\psi})\theta_R(\X)=\theta_R(\X),\]
where the second equality follows from the second equality of $(\star)$. Since both ${\X}^{\leq n-1}[-1]$ and ${\X}^{\geq n}$ have length less than $n$, we have $F_n(\uu{d_X^{n-1}})=F_{n}^L(\uu{d_X^{n-1}})=\uu{d_X^{n-1}}$. Hence  $F_n$ fixes $\Delta_R(\X)$ for all $\X\in\K^{0,n}(A\proj)$.

It remains to check that $F_n$ fixes the left truncated triangles of complexes $\X$ in $\K^{0,n}(A\proj)$. It is easy to check that
$\pi_L(\X)\cpx{\psi}=0$ and $\cpx{\psi}\theta_L(\X)=0$ in $\Kb{A}$. Therefore
\[\mu_{\X}\theta_L(\X)=(\id_{\X}+\cpx{\psi})\theta_L(\X)=\theta_L(\X)\text{ and }\pi_L(\X)\mu_{\X}=\pi_L(\X)(\id_{\X}+\cpx{\psi})=\pi_L(\X) \mbox{ in } \Kb{A}.\]
This shows that $F_n$ fixes $\Delta_L(\X)$ for $\X\in\K^{0,n}(A\proj)$.

Since the connecting isomorphism $F_n[1]\ra [1]F_n$ is $\id$, we conclude that $F_n$ fixes $\Delta_L(\X)$ and $\Delta_R(\X)$ for all complexes $\X$ of length $n$.

Denote by $\lambda_{\K^{\ell=n}(A\proj)}$ the $[1]$-compatible family of automorphisms $\mu_{\X}\lambda_{\X}^L$ for complexes $\X$ in $\K^{\ell=n}(A\proj)$,  and  by $\lambda_{\K^{\ell\leq n}(A\proj)}$ the union of $\lambda_{\K^{\ell\leq n-1}(A\proj)}$ and $\lambda_{\K^{\ell=n}(A\proj)}$. Then $(F_n, \id)$ is a conjugate functor of $(F_{n-1},\id)$ by $\lambda_{\K^{\ell=n}(A\proj)}$, and also the conjugate functor of $(F_0,\id)$ by $\lambda_{\K^{\ell\leq n}(A\proj)}$.

By passing to the union of the $[1]$-compatible families $\lambda_{\K^{\ell=n}(A\proj)}$  running over all lengths $n$ and  by forming conjugate functor, we get a pseudo-identity $(G,\id)$, naturally isomorphic to $(F, \id)$, which fixes  $\Delta_L(\X)$ and $\Delta_R(\X)$ for all complexes $\X$ in $\Kb{A}$.

We now show that $(G, \id)$ fixes the truncated triangles $\Delta_t(\X)$ for all $\X$ in $\Kb{A}$ and all $t\in \mathbb{Z}$.  Assume that there are integers $m\le n$ such that $X^m\neq 0\neq X^n$ and $X^i=0$ for $i<m$ or $i>n$. If $m=n$, then the truncated triangles are trivial and nothing needs to prove. So we may assume $m\leq t<n$.  Consider $\uu{d_X^t}: {\X}^{\leq t}[-1]\ra {\X}^{\geq t+1}$. One can check directly that
$$\pi_L({\X}^{\geq t+1})\circ \uu{d_X^t}\circ \theta_R({\X}^{\leq t}[-1])=d_X^t[-t-1]: X^t[-t-1]\lra X^{t+1}[-t-1].$$
Note that $G$ fixes $d_X^t[-t-1]$, $\theta_R({\X}^{\leq t}[-1])$ and $\pi_L({\X}^{\geq t+1})$. It follows that
\[\pi_L({\X}^{\geq t+1})\circ G(\uu{d_X^t})-\pi_L({\X}^{\geq t+1})\circ \uu{d_X^t}\]
factorizes through $\pi_R({\X}^{\leq t}[-1]):{\X}^{\leq t}[-1]\ra {\X}^{\leq t-1}[-1]$. However
\[\Hom_{\Kb{A}}({\X}^{\leq t-1}[-1],X^{t+1}[-t-1])=0.\]
Hence $\pi_L({\X}^{\geq t+1})\circ G(\uu{d_X^t})=\pi_L({\X}^{\geq t+1})\circ \uu{d_X^t}$. Consequently, $G(\uu{d_X^t})-\uu{d_X^t}$ factorizes through $\theta_L({\X}^{\geq t+1}):{\X}^{\geq t+2}\ra {\X}^{\geq t+1}$. The vanishing of $\Hom_{\Kb{A}}({\X}^{\leq t}[-1],{\X}^{\geq t+2})$ implies $G(\uu{d_X^t})=\uu{d_X^t}$, that is,  $G$ fixes $\uu{d_X^t}$. Moreover,
$$\theta(\uu{d_X^t})=\theta_L({\X}^{\geq m})\cdots \theta_L({\X}^{\geq t-1})\theta_L({\X}^{\geq t})\;\mbox{ and } \; \pi(\uu{d_X^t})=\pi_R({\X}^{\leq t+1})\cdots \pi_R({\X}^{\leq n-1})\pi_R({\X}^{\leq n}).$$
Since $G$ fixes both $\theta_L(\Y)$ and $\pi_R(\Y)$ for all objects $\Y$ in $\Kb{A}$, $G$ fixes both $\theta(\uu{d_X^t})$ and $\pi(\uu{d_X^t})$. So $G$ fixes $\Delta_t(\X)$ for $t\in\mathbb{Z}$ and $\X\in \Kb{A}$.
\end{proof}

\begin{lemma}\label{idd}
 Let $A$ be a finite-dimensional algebra over a field $k$, and let $(F, \xi): \Db{A}\ra \Db{A}$ (respectively, $\Ka\ra \Ka$, or $\Kb{A}\ra \Kb{A}$) be a triangle equivalence, such that $F$ fixes $A\proj$. If $(F, \xi)$ is standard, then $(F, \xi)$ is naturally isomorphic to $(\id, \id)$ as triangle functors.
\end{lemma}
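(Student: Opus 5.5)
Since $(F,\xi)$ is standard, I will first replace it, up to natural isomorphism of triangle functors, by a derived tensor functor $G=\cpx{X}\OT_A-$ for some bounded complex $\cpx{X}$ of $A$-$A$-bimodules. As in the proof of Proposition \ref{id}, $G$ commutes strictly with the shift, so $(F,\xi)\simeq(G,\id)$. It therefore suffices to show $(G,\id)\simeq(\id,\id)$. The hypothesis that $F$ fixes $A\proj$ gives a natural isomorphism $G|_{A\proj}\simeq \id_{A\proj}$. Evaluating at the regular module gives an isomorphism $G(A)\simeq A$ in the derived category. I will argue that $\cpx{X}$ is then quasi-isomorphic, as a complex of bimodules, to the bimodule $A$ concentrated in degree $0$.

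\textbf{Step 1: the bimodule $\cpx{X}$ is quasi-isomorphic to ${}_AA_A$.} As a complex of left modules, $\cpx{X}\simeq G(A)\simeq A$, so $\cpx{X}$ has homology only in degree $0$. Replacing $\cpx{X}$ by that homology, which is a bimodule $M$, we get $\cpx{X}\simeq M$ as bimodule complexes, with ${}_AM\simeq {}_AA$. The right $A$-action on $M$ is determined by $G$ on $\End_A(A)=A^{op}$, via $M\simeq G(A)$. The natural isomorphism $\eta: G|_{A\proj}\to\id$ gives $\eta_A: M\to A$, which intertwines $G(\rho_a)$ with $\rho_a$ for every right multiplication $\rho_a$. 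Hence $\eta_A$ is right $A$-linear as well, and $M\simeq A$ as bimodules. I expect some care to be needed in identifying $G(\rho_a)$ with the right action on $M$, since the derived tensor functor and the natural transformation must be compatible. I would handle this by taking $\cpx{X}$ to be a bimodule complex that is projective on the right, so that $\cpx{X}\otimes_A A=\cpx{X}$ strictly.

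\textbf{Step 2: conclude.} With $\cpx{X}\simeq A$ as bimodules, the functor $\cpx{X}\OT_A-$ is naturally isomorphic to $A\otimes_A-\simeq\id$. This natural isomorphism is induced by a quasi-isomorphism of bimodule complexes. It therefore commutes with the shift on the nose, so it is an isomorphism of triangle functors $(G,\id)\simeq(\id,\id)$. Composing with $(F,\xi)\simeq(G,\id)$ gives the claim.

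\textbf{The other categories.} For $\Ka$ and $\Kb{A}$, the same argument applies after identifying these categories with the corresponding subcategories of $\mathscr{D}^-(A)$. A standard equivalence there is again given by a derived tensor functor with a bimodule complex.

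The main obstacle is Step 1: showing that the natural isomorphism on $A\proj$ upgrades the left module isomorphism $M\simeq A$ to a bimodule isomorphism. This requires checking that the functor's action on endomorphisms of $A$ is exactly the right bimodule action.
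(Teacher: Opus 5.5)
Your proposal is correct and follows the same route as the paper: pass to $(\cpx{X}\OT_A-,\id)$, use that $F$ fixes $A\proj$ to get $\cpx{X}\simeq A$ in $\Db{A^e}$, and conclude $(F,\xi)\simeq(A\OT_A-,\id)=(\id,\id)$. Your Step 1 spells out the bimodule identification that the paper asserts in one line; the right-projective replacement is not needed there, since $\cpx{X}\otimes_A A\cong\cpx{X}$ holds for any bimodule complex.
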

\begin{proof}
 Since $(F, \xi)$ is standard, there is a bounded complex $\X$ of $A$-$A$-bimodules such that $(F, \xi)\simeq (\X\OT_A-, \id)$ as triangle functors. As $F$ fixes $A\proj$, the derived tensor functor $\X\OT_A-$ restricted to $A\proj$ is naturally isomorphic to the identity. This implies $\X\simeq A$ in $\Db{A^{e}}$, where $A^{e}$ is the enveloping algebra $A\otimes_kA^{op}$ of $A$. Here, $A^{op}$ stands for the opposite algebra of $A$. So $(\X\OT_A-, \id)\simeq (A\OT_A-, \id)$ as triangle functors, the latter is nothing but the identity triangle functor $(\id, \id)$. Therefore, $(F, \xi)\simeq (\id, \id)$ as triangle functors.
\end{proof}

Now we are ready to prove Theorem \ref{t0}.

\begin{proof} [{\bf Proof of Theorem \ref{t0}}]
It is known that every derived auto-equivalence of $\Db{A}$ restricts to a derived auto-equivalence of $\Kb{A}$. Let $\Out(\Db{A})$ (respectively, $\Out(\Kb{A})$) be the group of natural-isomorphism classes of derived auto-equivalences of $\Db{A}$ (respectively, $\Kb{A}$). Let $\res: \Out(\Db{A})$ $\ra \Out(\Kb{A})$ be the restriction map. Then $\res$ is a group isomorphism \cite[Proposition, p. 658]{C1}. Hence a derived auto-equivalence $F: \Db{A}\ra \Db{A}$ is standard if and only if $\res(F): \Kb{A}\ra \Kb{A}$ is standard. Therefore, Rickard's question holds for $A$ if and only if every derived auto-equivalence of $\Kb{A}$ is standard. By Lemma \ref{group}, this is equivalent to saying that every pseudo-identity of $\Kb{A}$ is standard. By Proposition \ref{truncated} and Lemma \ref{idd}, the latter is equivalent to saying that every pseudo-identity $(F, \id): \Kb{A}\ra \Kb{A}$ that fixes all truncated triangles is naturally isomorphic to the identity triangle functor $(\id,\id)$.
\end{proof}

To make Theorem \ref{t0} useful in applications, we need to understand pseudo-identities that fix all truncated triangles. First, we establish some properties of such pseudo-identities.

\begin{lemma}\label{prop-1}Let $A$ be a finite-dimensional algebra, and let $(F, \id): \Kb{A}$ $\ra \Kb{A}$ be a pseudo-identity that fixes all truncated triangles. Let
$\f=(f^i): \X\ra \Y$ be a morphism in $\Kb{A}$. If $f^i\neq 0$ for at most one integer $i$, then $F(\f)=\f$. In particular, $F$ fixes $\Zmap_j(A)$ for all $j\in \Z$.
\end{lemma}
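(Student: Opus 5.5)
The plan is to factor $\f$ through truncations, using only morphisms that $F$ is known to fix, and to handle the one remaining middle morphism by the "compose with truncation maps, then kill the error by degree reasons" argument from the proof of Proposition \ref{truncated}.

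Let $f^i$ be the only possibly nonzero component of $\f$. Since $\f$ is a chain map, we have $d_Y^if^i=0$ and $f^id_X^{i-1}=0$. Write $p:=\pi(\uu{d_X^i}):\X\ra{\X}^{\leq i}$ and $q:=\theta(\uu{d_Y^{i-1}}):{\Y}^{\geq i}\ra\Y$. These are morphisms in truncated triangles, so $F$ fixes them by hypothesis. Let $\g:{\X}^{\leq i}\ra{\Y}^{\geq i}$ be the graded map whose only nonzero component is $g^i=f^i$. The two identities above are exactly the conditions for $\g$ to be a chain map, and $\f=q\,\g\,p$ holds on the nose. Hence $F(\f)=q\,F(\g)\,p$, and it suffices to show $F(\g)=\g$.

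For the latter, consider $\theta_R({\X}^{\leq i}):X^i[-i]\ra{\X}^{\leq i}$ and $\pi_L({\Y}^{\geq i}):{\Y}^{\geq i}\ra Y^i[-i]$. Both lie in truncated triangles, so $F$ fixes them. Their composite with $\g$ is
$$\pi_L({\Y}^{\geq i})\,\g\,\theta_R({\X}^{\leq i})=f^i[-i]:X^i[-i]\lra Y^i[-i].$$
This is a morphism in $A\proj[-i]$, so it is fixed because $F$ is a pseudo-identity. It follows that $\bigl(\pi_L({\Y}^{\geq i})F(\g)-\pi_L({\Y}^{\geq i})\g\bigr)\theta_R({\X}^{\leq i})=0$. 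By the right truncated triangle of ${\X}^{\leq i}$, this difference factors through $\pi_R({\X}^{\leq i}):{\X}^{\leq i}\ra{\X}^{\leq i-1}$. But $\Hom_{\Kb{A}}({\X}^{\leq i-1},Y^i[-i])=0$, since the source is concentrated in degrees $<i$ and the target is concentrated in degree $i$, so every chain map between them is zero. Therefore $\pi_L({\Y}^{\geq i})F(\g)=\pi_L({\Y}^{\geq i})\g$.

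Applying the left truncated triangle of ${\Y}^{\geq i}$, we see that $F(\g)-\g$ factors through $\theta_L({\Y}^{\geq i}):{\Y}^{\geq i+1}\ra{\Y}^{\geq i}$. Again $\Hom_{\Kb{A}}({\X}^{\leq i},{\Y}^{\geq i+1})=0$ for degree reasons, since the source is concentrated in degrees $\leq i$ and the target in degrees $\geq i+1$. Hence $F(\g)=\g$, and so $F(\f)=\f$.

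For the final claim, every morphism in $\Zmap_j(A)$ is by definition represented by a chain map whose only nonzero component is in degree $j$, so it is fixed by the first part. The main point needing care is choosing the factorization $\f=q\,\g\,p$ so that $\g$ is a genuine chain map; this is guaranteed by the two identities $d_Y^if^i=0$ and $f^id_X^{i-1}=0$. After that, the vanishing of the relevant Hom spaces is immediate, because the complexes involved occupy disjoint ranges of degrees.
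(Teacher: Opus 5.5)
Your proof is correct, and its first half matches the paper's. The paper also writes $\f=\theta(\uu{d_Y^{i-1}})\circ\g\circ\pi(\uu{d_X^i})$, where $\g:{\X}^{\leq i}\ra{\Y}^{\geq i}$ has $f^i$ as its only component.

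The two proofs differ only in how they show $F(\g)=\g$.

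\textbf{The paper's route.} It does this in one line. Since $f^id_X^{i-1}=0=d_Y^if^i$, the two truncations can be spliced along $f^i$ into a single complex $\U$ with ${\U}^{\leq i}={\X}^{\leq i}$, ${\U}^{\geq i+1}={\Y}^{\geq i}[-1]$ and $d_U^i=f^i$. Then $\g=\uu{d_U^i}[1]$ is the shift of the first morphism of the truncated triangle $\Delta_i(\U)$. It is therefore fixed directly by the hypothesis together with $F[1]=[1]F$.

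\textbf{Your route.} You sandwich $\g$ between $\theta_R({\X}^{\leq i})$ and $\pi_L({\Y}^{\geq i})$. You then remove the error terms using the vanishing of $\Hom_{\Kb{A}}({\X}^{\leq i-1},Y^i[-i])$ and $\Hom_{\Kb{A}}({\X}^{\leq i},{\Y}^{\geq i+1})$. This is exactly the argument the paper uses at the end of the proof of Proposition \ref{truncated} to show that $G$ fixes $\uu{d_X^t}$. In effect, you re-derive for the spliced complex that the connecting map of a truncated triangle must be fixed once the $\theta$- and $\pi$-maps and $A\proj[-i]$ are fixed.

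\textbf{Comparison.} Your version uses slightly less of the hypothesis: only the second and third maps of truncated triangles, and no shift compatibility. The paper's version is shorter because it recognizes $\g$ as a map already appearing in a truncated triangle.

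You should also note explicitly that the case $f^i=0$ is trivial. This guarantees $X^i\neq 0$, so that $\theta_R({\X}^{\leq i})$ really has source $X^i[-i]$.
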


\begin{proof} Assume that there is an integer $i$ such that $f^i\ne 0$ and $f^j=0$ for all $j\neq i$. Consider  the complex
$$\U:\;\;\cdots\lra X^{i-1}\lraf{d_X^{i-1}}X^i\lraf{f^i} Y^i\lraf{d_Y^i}Y^{i+1}\lra \cdots$$
with $U^i=X^i$.
We have the morphism $\uu{d_U^i}: {\U}^{\leq i}[-1]={\X}^{\leq i}[-1]\ra {\U}^{\geq i+1}={\Y}^{\geq i}[-1]$. Then $\f=\theta(\uu{d_Y^{i-1}})\,(\uu{d_U^i}[1])\,\pi(\uu{d_X^i})$. Since $F$ fixes all truncated triangles and $F[1]=[1]F$, we see that $F$ fixes $\theta(\uu{d_Y^{i-1}})$, $\uu{d_U^i}[1]$ and $\pi(\uu{d_X^i})$. So $F$ fixes $\f$.
\end{proof}

Let $(F, \id): \Kb{A}\ra \Kb{A}$ be a pseudo-identity that fixes all truncated triangles, and let $i$ be an integer. Suppose that  $\f: \X\ra \Y$ is a morphism in $\Kb{A}$.
We regard $\f$ as a representative of the homotopy class of $\f$ and define a graded morphism
$$\chi_{i,F}(\f): \X\lra \Y$$
by $\chi_{i,F}(\f)^{\leq i}:=\g_1$ and $\chi_{i,F}(\f)^{\geq i+1}:=\g_2$, where $\g_1$ and $\g_2$ are representatives of the homotopy classes of $F({\cpx{f}}^{\leq i})$ and $F({\cpx{f}}^{\geq i+1})$, respectively.  In the following, we show that $\chi_{i,F}(\f)$ is a well-defined morphism in $\Kb{A}$.

First, applying $F$ to the left square of the following diagram of triangles in $\Kb{A}$:
  \[(*)\;\;\;\xymatrix@C=15mm{
\Delta_i(\X): &{\X}^{\leq i}[-1]\ar[r]^-{\uu{d_X^i}}\ar[d]^{{\cpx{f}}^{\leq i}[-1]}&{\X}^{\geq i+1}\ar[d]^{{\cpx{f}}^{\geq i+1}}\ar[r]^-{\theta(\uu{d_X^i})} &\X\ar[d]^{\f}\ar[r]^{\pi(\uu{d_X^i})}&{\X}^{\leq i}\ar[d]^{{\cpx{f}}^{\leq i}} \\
\Delta_i(\Y): &{\Y}^{\leq i}[-1]\ar[r]^-{\uu{d_Y^i}}&{\Y}^{\geq i+1}\ar[r]^-{\theta(\uu{d_Y^i})} &\Y\ar[r]^{\pi(\uu{d_Y^i})}&{\Y}^{\leq i},}\] we see that $\chi_{i,F}(\f)$ is a morphism of complexes. Note that the zero-homotopy class of morphisms of complexes from ${\X}^{\leq i}[-1]$ to ${\Y}^{\geq i+1}$ has only one element $0$.  Next, it is easy to know that $\chi_{i,F}(\f)$ in $\Kb{A}$ is independent of choices of representatives of $F({\cpx{f}}^{\leq i})$ and $F({\cpx{f}}^{\geq i+1})$. Finally, we show that $\chi_{i,F}(\f)$ in $\Kb{A}$ is independent of the choices of representatives of the homotopy class $\f$. Indeed, since $\chi_{i,F}(\f)$ is additive in $\f$, it is enough to prove that $\chi_{i,F}(\f)=0$ in $\Kb{A}$ if $\f\sim 0$ as morphisms of complexes.  Assume $\f\sim 0$. Then there is a homomorphism $h: X^{i+1}\ra Y^i$ such that ${\f}^{\leq i}\sim (hd_X^i)$ and $f^{\geq i+1}\sim (d_Y^ih)$, where the morphism $(hd_X^i): {\X}^{\leq i}\ra {\Y}^{\leq i}$ is defined by the $i$-component equal to $hd_X^i$ and other components equal to $0$, and where $(d_Y^ih): {\X}^{\geq i+1}\ra {\Y}^{\geq i+1}$ is defined similarly. By Lemma \ref{prop-1}, $F({\cpx{f}}^{\leq i})=F(hd_X^i)=(hd_X^i)$ and $F({\cpx{f}}^{\geq i+1})=F(d_Y^ih)=(d_Y^ih)$. This implies that $\chi_{i,F}(\f)\sim 0$. So $\chi_{i,F}(\f): \X\ra \Y$ is a well-defined morphism in $\Kb{A}$.

Moreover, $\chi_{i,F}(\f)$ makes the following diagram in $\Kb{A}$ commute:

   \[(**)\;\;\xymatrix@C=15mm{
\Delta_i(\X): &{\X}^{\leq i}[-1]\ar[r]^-{\uu{d_X^i}}\ar[d]^{F({\cpx{f}}^{\leq i})[-1]}&{\X}^{\geq i+1}\ar[d]^{F({\cpx{f}}^{\geq i+1})}\ar[r]^-{\theta(\uu{d_X^i})} &\X\ar[d]^{\chi_{i,F}(\f)}\ar[r]^{\pi(\uu{d_X^i})}&{\X}^{\leq i}\ar[d]^{F({\cpx{f}}^{\leq i})} \\
\Delta_i(\Y): &{\Y}^{\leq i}[-1]\ar[r]^-{\uu{d_Y^i}}&{\Y}^{\geq i+1}\ar[r]^-{\theta(\uu{d_Y^i})} &\Y\ar[r]^{\pi(\uu{d_Y^i})}&{\Y}^{\leq i}.}\]

\medskip
Let us now consider the images of morphisms under a pseudo-identity.

\begin{lemma}[Reduction Lemma]\label{coro}
Let $A$ be a finite-dimensional algebra, and let $(F, \id): \Kb{A}$ $\ra \Kb{A}$ be a pseudo-identity that fixes all truncated triangles. Then the following holds for a morphism $\f=(f^i): \X\ra \Y$ in $\Kb{A}$.

$(1)$ For $i\in \Z$, $F(\f)=\chi_{i,F}(\f)+\g$ with $\g\in \Zmap_{i+1}(A)(\X, \Y)$. In particular, $F(\f)=\f+\h$ with $\h\in \sum_{j\in \Z}\Zmap_j(A)(\X, \Y)$.

$(2)$ Let $n\geq 0$. Assume that $F$ fixes $\K^{\ell\leq n}(A\proj)$. Then
\begin{enumerate}
\item[{\rm (a)}] $F$ fixes $\K^{\ell\leq n+1}(A\proj)$ if and only if $F$ fixes $\K^{0,n+1}(A\proj)$.
\item[{\rm (b)}] If $\X$ and $\Y$ are in $\K^{0,n+1}(A\proj)$, then there is a morphism $\g\in \bigcap_{1\leq j\leq n+1}\Zmap_j(A)(\X, \Y)$ such that $F(\f)=\f+\g$.
\end{enumerate}
\end{lemma}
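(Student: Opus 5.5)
For (1), I will compare $F(\f)$ and $\chi_{i,F}(\f)$ using the diagram $(**)$ together with the image under $F$ of the diagram $(*)$. Since $F$ fixes $\Delta_i(\X)$ and $\Delta_i(\Y)$, applying $F$ to $(*)$ gives a morphism of triangles $\Delta_i(\X)\ra\Delta_i(\Y)$ with outer vertical maps $F({\cpx{f}}^{\geq i+1})$ and $F({\cpx{f}}^{\leq i})$ and middle map $F(\f)$. The diagram $(**)$ is another such morphism with the same outer maps and middle map $\chi_{i,F}(\f)$. Both triangles are standard triangles of the morphisms $\uu{d_X^i}$ and $\uu{d_Y^i}$. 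Hence Lemma \ref{st}(4) applies: there is $\h:{\X}^{\geq i+1}\ra {\Y}^{\leq i}[-1]$ with $\uu{d_Y^i}\h\uu{d_X^i}\sim 0$, and
\[F(\f)-\chi_{i,F}(\f)\sim\left(\begin{smallmatrix}0&0\\ \cpx{c}[1]&\uu{d_Y^i}\h\end{smallmatrix}\right).\]

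The key step is to identify this difference as an element of $\Zmap_{i+1}(A)(\X,\Y)$. The complex ${\Y}^{\leq i}[-1]$ is concentrated in degrees $\le i+1$ and ${\X}^{\geq i+1}$ in degrees $\ge i+1$, so $\h$ has only the component $h^{i+1}:X^{i+1}\ra Y^i$. Then $\uu{d_Y^i}\h$ has only the degree $i+1$ component $d_Y^ih^{i+1}$. Next, $\uu{d_Y^i}\h\uu{d_X^i}$ is a map ${\X}^{\le i}[-1]\ra{\Y}^{\ge i+1}$, and homotopies between such complexes vanish for degree reasons. Therefore $\uu{d_Y^i}\h\uu{d_X^i}\sim 0$ forces it to be $0$ on the nose, and we may take $\cpx{c}=0$. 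Under the identification $\X=\cone(\uu{d_X^i})$, the difference is then the morphism $\X\ra\Y$ concentrated in degree $i+1$ with component $d_Y^ih^{i+1}$, which factors through $d_Y^i$. That is exactly an element of $\Zmap_{i+1}(A)$.

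For the ``in particular'' statement I will induct on the length of $\X$ and $\Y$ jointly. Take $i$ to be the left end of $\X$, so that ${\X}^{\le i}$ and ${\X}^{\geq i+1}$ are shorter. The induction hypothesis applied to ${\cpx f}^{\le i}$ and ${\cpx f}^{\ge i+1}$ gives $\chi_{i,F}(\f)=\f$ plus terms coming from the $\Zmap_j$ of the truncations. Each such term extends by zero to an element of $\Zmap_j(A)(\X,\Y)$, after checking that the factorization through $d^{j-1}$ survives the extension. Stalk complexes form the base case, where $F$ is the identity since it is a pseudo-identity.

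For (2)(a), one direction is trivial. Conversely, every complex of length $n+1$ is a shift of one in $\K^{0,n+1}(A\proj)$ and $F[1]=[1]F$, so $F$ fixes all objects and endomorphisms among length-$(n+1)$ complexes. A morphism between complexes of lengths at most $n+1$ whose supports are not aligned in $[0,n+1]$ up to a common shift should be handled with (1). Choose $i$ so that both truncations have length at most $n$. Then $\chi_{i,F}(\f)=\f$, and the correction lies in $\Zmap_{i+1}$. Where possible, choose $i$ so that $d_Y^i$ or $X^{i+1}$ vanishes, which kills the correction; otherwise use the remaining freedom in $i$ similarly. For (2)(b), apply (1) with each $i=0,\dots,n$. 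The truncations at $i$ have length at most $n$, so $F$ fixes them and $\chi_{i,F}(\f)=\f$. Hence $\g:=F(\f)-\f\in\Zmap_{i+1}$ for every such $i$, i.e. $\g\in\bigcap_{1\le j\le n+1}\Zmap_j$.

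The main obstacle is the unaligned case in (2)(a), where I expect to need care in choosing $i$.
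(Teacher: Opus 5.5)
Your arguments for (1) and (2)(b) are correct and follow the paper. Your explicit identification of the Lemma~\ref{st}(4) correction as the map concentrated in degree $i+1$ with component $d_Y^ih^{i+1}$ (with $\cpx{c}=0$ for degree reasons) fills in what the paper leaves implicit. One small fix in the induction for the second half of (1): cut at the left end of the \emph{joint} support of $\X$ and $\Y$, not of $\X$ alone. Then ${\f}^{\leq i}$ is a map of stalk complexes and ${\f}^{\geq i+1}$ has smaller joint length.

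The genuine gap is in (2)(a). A single application of (1) at an $i$ for which all four truncated complexes lie in $\K^{\ell\leq n}(A\proj)$ cannot work in general. Take $n=1$, $A=k[x]/(x^2)$, $\X=(A\lraf{x}A\lraf{x}A)$ in degrees $0,1,2$, and $\Y$ the same complex placed in degrees $1,2,3$. The only admissible $i$ is $i=1$, and none of $X^1,X^2,Y^1,Y^2$ vanishes. In fact $\Zmap_2(A)(\X,\Y)\neq 0$. The map whose only nonzero component is multiplication by $x$ in degree $2$ lies in it. It is not null-homotopic: the only possible homotopy component $X^2\to Y^1$ is multiplication by some $c\in A$ with $cx=0$ (the degree-$1$ condition), and this forces $xc=0\neq x$. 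So there is no ``remaining freedom in $i$'', and (1) only yields $F(\f)-\f\in\Zmap_2(A)(\X,\Y)$. When two length-$(n+1)$ supports meet in a single degree, there is no admissible $i$ at all.

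The missing idea is to cut twice, at the ends of the common support, using that a zero morphism is trivially fixed. Let $\X,\Y$ not lie in a common shift of $\K^{0,n+1}(A\proj)$, let $\f\neq 0$, and put $s=\min\{j\mid X^j\neq0\neq Y^j\}$ and $r=\max\{j\mid X^j\neq0\neq Y^j\}$. Then $r-s\leq n$, so $F$ fixes ${\f}^{[s,r]}$, while ${\f}^{\geq r+1}=0$ and ${\f}^{\leq s-1}=0$.

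First apply (1) to ${\f}^{\geq s}$ at $i=r$. This gives $F({\f}^{\geq s})={\f}^{\geq s}$, since $\Zmap_{r+1}(A)({\X}^{\geq s},{\Y}^{\geq s})=0$ because $X^{r+1}=0$ or $Y^{r+1}=0$. Then apply (1) to $\f$ at $i=s-1$. This gives $F(\f)=\f$, since $\Zmap_s(A)(\X,\Y)=0$. That vanishing is clear if $Y^{s-1}=0$; if $X^{s-1}=0$, a map with sole component $d_Y^{s-1}h$ is null-homotopic via the homotopy $h$.

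The second cut is not of the type your criterion allows, since ${\Y}^{\geq s}$ or ${\X}^{\geq s}$ may have length $n+1$. The first cut is exactly what shows that $F$ nevertheless fixes ${\f}^{\geq s}$.
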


\begin{proof}
(1) Note that $\Delta_i(\X)$ is a truncated triangle and therefore a standard triangle for all $i$ and $\X$. Applying the pseudo-identity $F$ to $(*)$ and considering the morphisms $F(\f)$ and $\chi_{i,F}(\f)$, we deduce from $(**)$ and Lemma \ref{st}(4) that the first statement in (1) holds.

To get the second statement in (1), we may assume $\f\ne 0$ in $\Kb{A}$ and choose biggest $m_f\in \Z$ and smallest $n_f\in \Z$ such that $X^j=0=Y^j$ whenever $j$ is outside of $\{m_f, m_f+1,\cdots, n_f\}$. We prove the statement by induction on $n_f-m_f$. If $n_f-m_f=0$, then $F(\f)=\f$ since $F$ is a pseudo-identity. Assume that the statement holds for all morphisms $\g$ with $n_g-m_g<t$. Now assume $n_f-m_f=t$.

For simplicity, we write $m=m_f$ and $n=n_f$. Since ${\f}^{\leq m}$ is a morphism in $A\proj[-m]$ and $F$ is a pseudo-identity, we have $F({\cpx{f}}^{\leq m})={\cpx{f}}^{\leq m}$. By induction hypothesis, $F({\cpx{f}}^{\geq m+1})={\cpx{f}}^{\geq m+1}+\cpx{s}$ for some $\cpx{s}\in \sum_{i\in\mathbb{Z}}\Zmap_i(A)({\X}^{\geq m+1},{\Y}^{\geq m+1})$. Considering $i=m$ in the first statement, we get $F(\f)=\chi_{m,F}(\f)+\h$ with $\h\in \Zmap_{m+1}(A)(\X, \Y)$. By the definition of $\Zmap_i(A)$, $\s: {\X}^{\geq m+1}\ra {\Y}^{\geq m+1}$ can be extended to the morphism $\cpx{\tilde{s}}: \X\ra \Y$ in $\sum_{i\in\mathbb{Z}}\Zmap_i(A)(\X,\Y)$ by adding the zero map $X^m\ra Y^m$. By the definition of $\chi_{m,F}(\f)$, we have $\chi_{m,F}(\f)=\f+\cpx{\tilde{s}}$. So $F(\f)=\f+\cpx{\tilde{s}}+\h$ with $\cpx{\tilde{s}}+\h\in\sum_{i\in\mathbb{Z}}\Zmap_i(A)(\X,\Y)$.

(2) (a) The necessity is clear. Now assume that $F$ fixes $\K^{0,n+1}(A\proj)$. Since the connecting isomorphism is $\id: F[1]\ra [1]F$, we know that $F$ fixes $\K^{0,n+1}(A\proj)[i]$ for all $i\in\Z$. Now suppose that $\f:\X\ra\Y$ is a nonzero morphism in $\K^{\ell\leq n+1}(A\proj)$ such that $\X$ and $\Y$ do not both belong to $\K^{0,n+1}(A\proj)[i]$ for the same $i$. Define $s:=\mbox{min}\{j\in\mathbb{Z}\mid X^j\ne 0\ne Y^j\}$ and $r:=\mbox{max}\{j\in\mathbb{Z}\mid X^j\ne 0\ne Y^j\}$. Then $r-s\leq n$.
Hence $F({\cpx{f}}^{[s,r]})={\cpx{f}}^{[s,r]}$ since ${\f}^{[s, r]}$ lies in $\K^{\ell\leq n}(A\proj)$. Due to ${\f}^{\geq r+1}=0$, we have $F({\f}^{\geq r+1})=0$. By $(1)$, $F({\f}^{\geq s})=\chi_{r,F}({\f}^{\geq s})+\g_{r+1}$ with $\g_{r+1}\in \Zmap_{r+1}(A)({\X}^{\geq s}, {\Y}^{\geq s})=0$. By definition of $\chi_{r,F}({\f}^{\geq s})$, we have $F({\f}^{\geq s})={\f}^{\geq s}$. On the other hand, due to ${\f}^{\leq s-1}=0$, we get $F({\f}^{\leq s-1})=0$. By $(1)$ again, $F(\f)=\chi_{s-1,F}(\f)+\g_s$ with $\g_s\in \Zmap_s(A)(\X, \Y)=0$. By definition, $\chi_{s-1,F}(\f)=\f$. So $F(\f)=\f$.

(b) Let $0\leq j\leq n$. Observe that ${\cpx{f}}^{\leq j}$ and ${\cpx{f}}^{\geq j+1}$ belong to $\K^{\ell\leq n}(A\proj)$. Hence $F$ fixes ${\cpx{f}}^{\leq j}$ and ${\cpx{f}}^{\geq j+1}$, so $\chi_{j,F}(\f)=\f$. Thus (b) now follows from (1).
\end{proof}

With the help of Theorem \ref{t0} and Lemmas \ref{prop-1} and \ref{coro}, we can give short proofs of some known results on Rickard's question.

\begin{exam}\label{ex}
(1) Rickard's question holds for hereditary algebras \cite{MY}.

Indeed, suppose that $A$ is a hereditary algebra. Let $\X, \Y$ be indecomposable complexes in $\K^{0,1}(A\proj)$. Then $\X=(X^0\raf{d_X^0}X^1)$ and $\Y=(Y^0\raf{d_Y^0}Y^1)$, where $d_X^0$ and $d_Y^0$ are injective. It is easy to see that $\Zmap_1(A)(\X, \Y)=0$. By Theorem \ref{t0} and Lemma \ref{coro}(2), Rickard's question holds for $A$.

(2) Let $\Omega$ be the set of triples $(r, n, m)$ of integers such that $1\le r\le n$ and $m\geq 0$. A finite-dimensional algebra $A$ is called a {\it derived-discrete algebra} \cite{BGS} if $A$ is derived equivalent to either a hereditary algebra of Dynkin type or a $k$-algebra $\Lambda(r, n, m)$ with $(r, n, m)\in \Omega$, where $\Lambda(r, n, m)$ is defined by the quiver
\[\xymatrix@R=0.4em{
&& &\mathop{\bullet}\limits_1\ar[r]^-{\alpha_1} &\mathop{\bullet}\limits_2\cdots \mathop{\bullet}\limits_{n-r-2}\ar[r]^-{\alpha_{n-r-2}} &\mathop{\bullet}\limits_{n-r-1}\ar[rd]^{\alpha_{n-r-1}} &\\
\mathop{\bullet}\limits_{-m}\ar[r]^-{\alpha_{-m}} &\mathop{\bullet}\limits_{-m+1}\cdots  \mathop{\bullet}\limits_{-1}\ar[r]^-{\alpha_{-1}}&\mathop{\bullet}\limits_{0}\ar[ru]^{\alpha_{0}}&&&&\mathop{\bullet}\limits_{n-r}\ar[ld]_-{\alpha_{n-r}}\\
 && &\mathop{\bullet}\limits_{n-1}\ar[lu]_-{\alpha_{n-1}} &\mathop{\bullet}\limits_{n-2}\ar[l]_-{\alpha_{n-2}}\cdots \mathop{\bullet}\limits_{n-r+2}&\mathop{\bullet}\limits_{n-r+1}\ar[l]_-{\alpha_{n-r+1}}&
 }\]
with relations $\alpha_0\alpha_{n-1}=\alpha_{n-1}\alpha_{n-2}= \cdots = \alpha_{n-r+1}\alpha_{n-r}=0$. It was proved, with the help of the description of morphisms of the derived categories of gentle algebras \cite{ALP}, that Rickard's question holds for derived-discrete algebras \cite{BC, CZ}.

For $A:=\Lambda(r, n, m)$ with $r\neq 1$, we give a short proof of this result. Actually, by \cite[Theorem 7.4]{ALP}, $\dim_k\Hom_{\Kb{A}}(\X, \Y)\leq 1$ for all indecomposable objects $\X$ and $\Y$ of $\Kb{A}$. Hence either $\Zmap_i(A)(\X, \Y)=0$ for all $i\in \Z$, or $\Hom_{\Kb{A}}(\X, \Y)=\Zmap_i(A)(\X, \Y)$ for an $i\in \Z$. Thus it follows from Theorem \ref{t0}, Lemma \ref{prop-1} and Lemma \ref{coro}(2) that Rickard's question holds for $A$.
\end{exam}

\section{A series of examples of non-standard derived equivalences}\label{example}

In this section we prove Theorem \ref{t} by constructing infinitely many counterexamples to Rickard's question.

We first prove a criterion for a pair $(\id,\phi)$ to be a triangle functor, where $\phi: \id[1]\ra [1]\id$ is a natural isomorphism.

Let $A$ be an Artin algebra and $\phi: \id [1]\ra [1]\id$ a natural isomorphism on $\Ka$. We say that $\phi$ is {\it represented above} if, for any object $\X$ in $\Ka$ and for any $n\in \mathbb{Z}$, there is a representative $\g: \X\ra \X$ of the homotopy class $\phi_{\X}[-1]: \X\ra \X$ such that
$${\g}^{\geq n}=\phi_{X^{\bullet\geq n}}[-1]: {\X}^{\geq n}\lra {\X}^{\geq n}$$
 in $\Ka$.
\begin{prop}\label{p}
Let $A$ be an Artin algebra. Suppose that $\phi: \id [1]\ra [1]\id$ is a represented above, natural isomorphism on $\Ka$. Then the following are equivalent:

$(1)$ $(\id, \phi): \Ka\ra \Ka$ is a triangle functor.

$(2)$ For any morphism $\f: \X\ra \Y$ in $\Kb{A}$, there exists a morphism $\h: \Y\ra \X$ in $\Kb{A}$ such that $\f\h\f=\f(\phi_{\X}[-1]-1)$ in $\Kb{A}:$
\[\begin{array}{c}\xymatrix{
\X\ar[r]^{\f}\ar[d]_{\phi_{\X}[-1]-1}&\Y\ar@{..>}[dl]_{\h} \\
\X\ar[r]^{\f}&\Y.
}\end{array}\]
\end{prop}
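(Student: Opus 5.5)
The plan is to reduce the triangle-functor condition for $(\id,\phi)$ to the existence of a suitable endomorphism of a mapping cone, and then read this off from Lemma~\ref{st}(3). Since every triangle is isomorphic to a standard one and $\phi$ is natural, $(\id,\phi)$ is a triangle functor if and only if, for every morphism of complexes $\f:\X\ra\Y$ in $\Ka$, the sequence
$$\X\lraf{\f}\Y\lraf{\theta(\f)}\cone(\f)\lraf{\phi_{\X}\pi(\f)}\X[1]$$
is a triangle. Put $\cpx{s}:=\phi_{\X}[-1]$, which is an automorphism of $\X$. Then $\phi_{\X}\pi(\f)=(\cpx{s}[1])\pi(\f)$, so we are asking whether there is an isomorphism $\cpx{\psi}:\cone(\f)\ra\cone(\f)$ with $\cpx{\psi}\theta(\f)=\theta(\f)$ and $\pi(\f)\cpx{\psi}=(\cpx{s}[1])\pi(\f)$. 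This is diagram $(\divideontimes)$ with $\g=\f$, $\cpx{r}=\id$ and $\cpx{s}=\phi_{\X}[-1]$.

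For $(1)\Rightarrow(2)$, let $\f$ be a morphism in $\Kb{A}$. By axiom TR3, applied to the standard triangle $\Delta(\f)$ and its image under $(\id,\phi)$, there is a morphism $\cpx{\psi}$ making both squares commute. Lemma~\ref{st}(3) then gives $\h:\Y\ra\X$ with $\f\h\f\sim\f\cpx{s}-\f=\f(\phi_{\X}[-1]-1)$.

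For $(2)\Rightarrow(1)$ I will first treat $\f$ in $\Kb{A}$.
\begin{enumerate}
\item Since $\Kb{A}$ is Krull--Schmidt, decompose $\f$, after conjugating by automorphisms, as $\left(\begin{smallmatrix} u&0\\0&\rr\end{smallmatrix}\right)$ with $u$ an isomorphism and $\rr$ radical. The iso summand gives a split triangle $X_1\ra Y_1\ra 0\ra X_1[1]$. Naturality of $\phi$ reduces the question to the radical summand, and the axioms are compatible with direct sums.
\item For radical $\f$, the converse part of Lemma~\ref{st}(3), applied to the $\h$ supplied by (2), produces $\cpx{\psi}$ making both squares commute.
\item Lemma~\ref{st}(5) applies because $\cpx{r}=\id$ and $\cpx{s}$ are isomorphisms and $\g=\f$ is radical. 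Hence $\cpx{\psi}$ is an isomorphism.
\item Therefore the image sequence is isomorphic to $\Delta(\f)$, and so it is a triangle.
\end{enumerate}

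The main obstacle is passing from $\Kb{A}$ to $\Ka$, where complexes are only bounded above and (2) is not assumed directly. Here I would use the hypothesis that $\phi$ is represented above. For $\f:\X\ra\Y$ in $\Ka$, choose $n\ll0$ below all nonzero cohomology of $\X$ and $\Y$. Then compare $\f$ with its brutal truncation $\f^{\geq n}:\X^{\geq n}\ra\Y^{\geq n}$, which lies in $\Kb{A}$, through the truncated triangles, as in the construction of $\chi_{i,F}$.

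Represented-aboveness lets us pick a representative $\g$ of $\phi_{\X}[-1]$ whose restriction to degrees $\geq n$ is $\phi_{\X^{\geq n}}[-1]$. Applying (2) to $\f^{\geq n}$ yields $\h'$, which we extend by zero in degrees below $n$. The discrepancy between $\f\h'\f$ and $\f(\g-1)$ is then concentrated in degrees $<n$. I expect to absorb it by a homotopy, using that $\X$ and $\Y$ are acyclic in low degrees with projective terms, so that maps out of the low part into a right-bounded acyclic region are null-homotopic.

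Once $\f\h\f\sim\f(\phi_{\X}[-1]-1)$ holds in $\Ka$, Lemma~\ref{st}(3) again produces $\cpx{\psi}$. To see that $\cpx{\psi}$ is an isomorphism, I would either repeat the radical/iso decomposition, which is delicate because $\Ka$ need not be Krull--Schmidt, or show directly that $\cpx{\psi}^{\geq n}$ is an isomorphism for every $n$. Checking this last point is where I expect the real work to lie.
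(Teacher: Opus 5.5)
Your proof of $(1)\Rightarrow(2)$ is the paper's argument: TR3 plus Lemma~\ref{st}(3) with $\g=\f$, $\cpx{r}=1$, $\cpx{s}=\phi_{\X}[-1]$. Your radical/iso argument for bounded $\f$ is the paper's argument for (a)$\Rightarrow$(1), except that the paper runs it in $\Ka$ rather than $\Kb{A}$. But you have not established $(2)\Rightarrow(1)$ for the morphisms that actually matter, those of $\Ka$.

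\textbf{The extension step fails as written.} The morphism $h^{\bullet}_0\colon{\Y}^{\geq n}\ra{\X}^{\geq n}$ supplied by (2), extended by zero below degree $n$, is a chain map $\Y\ra\X$ only if $h_0^n d_Y^{n-1}=0$. This is false in general (e.g.\ $h^{\bullet}_0=\id$).

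Since $H^i(\X)=H^i(\Y)=0$ for $i\leq n$, the lower parts of $\X$ and $\Y$ are projective resolutions of $\ker d_X^n$ and $\ker d_Y^n$. So you must restrict $h_0^n$ to a map $\ker d_Y^n\ra\ker d_X^n$ and lift it downward by the comparison theorem. The paper packages this as the isomorphism
$t_{\X,\Y}\colon\Hom_{\Ka}(\X,\Y)\ra\Hom_{\K^b(A\md)}(X^{\bullet}_0,Y^{\bullet}_0)$, $\g\mapsto(H^n({\g}^{\geq n}),{\g}^{\geq n})$, of \cite[Lemma 2.2(1)]{HX10}. It then sets $\h:=t_{\Y,\X}^{-1}(H^n(h^{\bullet}_0),h^{\bullet}_0)$. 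Represented-aboveness is used to identify $t_{\X,\Y}(\f(\phi_{\X}[-1]-1))$ with the pair coming from ${\f}^{\geq n}(\phi_{{\X}^{\geq n}}[-1]-1)$.

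With this corrected extension, your absorption idea does work. After subtracting a homotopy supported in degrees $\geq n+1$, the discrepancy is a chain map vanishing in degrees $\geq n$. Such a map is null-homotopic because the $X^i$ are projective and $\Y$ is exact in degrees $\leq n-1$.

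\textbf{The more serious gap is the isomorphism claim.} You never show that the resulting $\cpx{\psi}\colon\cone(\f)\ra\cone(\f)$ is an isomorphism for $\f$ in $\Ka$. You defer this as ``the real work'' because you doubt that $\Ka$ is Krull--Schmidt. It is. $\Ka\simeq\Db{A}$ has Hom-sets of finite length over the commutative Artinian base ring, and it has split idempotents, as does any bounded derived category of an abelian category. Hence every object is a finite direct sum of objects with local endomorphism rings, and the paper uses exactly this.

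The right organization is therefore the paper's. First prove the intermediate condition (a): a factorization $\f\h\f=\f(\phi_{\X}[-1]-1)$ exists for every morphism of $\Ka$, via the truncation argument above. Then run your iso/radical decomposition with Lemma~\ref{st}(3)--(5) verbatim in $\Ka$. Your separate bounded case becomes redundant, and no analysis of ${\cpx{\psi}}^{\geq n}$ is needed.
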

\begin{proof}
Consider the condition $(a)$:

\medskip
$(a)$ For any morphism $\f: \X\ra \Y$ in $\Ka$, there is a morphism $\h: \Y\ra \X$ in $\Ka$ such that $\f\h\f=\f(\phi_{\X}[-1]-1)$ in $\Ka$.

\medskip\noindent
We will prove $(1)\Leftrightarrow (a)\Leftrightarrow (2)$. For simplicity, we write $F$ for the pair $(\id, \phi)$.

$(1)\Rightarrow (a)$ Assume that $F$ is a triangle functor. Let $\f: \X\ra \Y$ be a morphism in $\Ka$.  By the definition of triangulated categories, there is an isomorphism $\cpx{\psi}: \cone(\f)\ra \cone(\f)$ in $\Ka$ such that the following diagram commutes in $\Ka$:
 \[\begin{array}{c}\xymatrix@R=1.5em{
F(\Delta(\f)):& \X\ar[d]^{1}\ar[r]^-{\f}&\Y\ar[r]^-{\theta(\f)}\ar[d]^{1}& \cone(\f)\ar[r]^-{\phi_{\X}\pi(\f)}
\ar@{..>}[d]^-{\cpx{\psi}}&\X[1]\ar[d]^{1} \\
\Delta(\f):& \X\ar[r]^-{\f}&\Y\ar[r]^-{\theta(\f)}& \cone(\f)\ar[r]^-{\pi(\f)}&\X[1].
}\end{array}\]
By Lemma \ref{st}(3), there is a morphism $\h: \Y\ra \X$ in $\Ka$ such that $\f\h\f=\f(\phi_{\X}[-1])-\f=\f(\phi_{\X}[-1]-1)$ in $\Ka$.

$(a)\Rightarrow (1)$ To prove that $F=(\id,\phi)$ is a triangle functor, we need only to show that $F$ preserves triangles. Since every triangle is isomorphic to a standard triangle, it suffices to prove that standard triangles are preserved. Let $\f:\X\to\Y$ be a morphism in $\Ka$, we show that $F(\Delta(\f))$ is a triangle.

Note that $\Ka$ is a Krull-Schmidt category. Thus $\f$ is the direct sum of  an isomorphism and a radical morphism (up to conjugation), that is, there are complexes $\X_1, \X_2, \Y_1, \Y_2$ and  isomorphisms $\cpx{s}_X: \X\ra \X_1\oplus \X_2$ and $\cpx{s}_Y: \Y\ra \Y_1\oplus \Y_2$ such that the following diagram commutes in $\Ka$:
 \[\begin{array}{c}\xymatrix{
\X\ar[d]_{\cpx{s}_X}\ar[r]^{\f}&\Y\ar[d]^{\cpx{s}_Y}\\
\X_1\oplus \X_2\ar[r]^{\left(\begin{smallmatrix}
\cpx{s}&0 \\
0&\cpx{r}
\end{smallmatrix}\right)}&\Y_1\oplus \Y_2,
}\end{array}\]
where $\cpx{s}: \X_1\ra \Y_1$ is an isomorphism and $\cpx{r}: \X_2\ra \Y_2$ is a radical morphism. Then $F(\Delta(\f))$ is a triangle if and only if $F\big(\Delta\left(\begin{smallmatrix}
\cpx{s}&0 \\
0&\cpx{r}
\end{smallmatrix}\right)\big)$ is a triangle if and only if both $F(\Delta(\cpx{s}))$ and $F(\Delta(\cpx{r}))$ are triangles. Clearly, $F(\Delta(\cpx{s}))$ is a triangle as $\cone(\cpx{s})$ is the zero object in $\Ka$. In the following, we show that $F(\Delta(\cpx{r}))$ is a triangle.

Consider the morphism $\cpx{r}$. By the assumption $(a)$, there is a morphism $\h: \Y_2\ra \X_2$ in $\Ka$ such that $\cpx{r}\h\cpx{r}= \cpx{r}(\phi_{\X_2}[-1]-1)$ in $\Ka$.
By Lemma \ref{st}(3)-(5), there is an isomorphism $\cpx{\psi}: \cone(\cpx{r})\ra \cone(\cpx{r})$ in $\Ka$ such that the diagram
 \[\begin{array}{c}\xymatrix@R=1.5em{
F(\Delta(\cpx{r})):& \X_2\ar[d]^{1}\ar[r]^-{\cpx{r}}&\Y_2\ar[r]^-{\theta(\cpx{r})}\ar[d]^{1}& \cone(\cpx{r})\ar[r]^-{\phi_{\X_2}\pi(\cpx{r})}
\ar@{..>}[d]^-{\cpx{\psi}}&\X_2[1]\ar[d]^{1} \\
\Delta(\cpx{r}):& \X_2\ar[r]^-{\cpx{r}}&\Y_2\ar[r]^-{\theta(\cpx{r})}& \cone(\cpx{r})\ar[r]^-{\pi(\cpx{r})}&\X_2[1]
}\end{array}\]
commutes in $\Ka$. This means that $F(\Delta(\cpx{r}))$ is a triangle.

$(a)\Rightarrow (2)$ This implication is clear.

$(2)\Rightarrow (a)$ Let $\f: \X\ra \Y$ be in $\Ka$. Then there is an $n\in \mathbb{Z}$ such that $H^i(\X)=H^i(\Y)=0$ for all $i\leq n$. Recall that $H^i(\X)$ denotes the $i$-th homology of $\X$. Consider the following two complexes
$$\X_0: \;\;\; \cdots \lra 0\lra M\lraf{u} X^n\lraf{d_X^n} X^{n+1}\lraf{d_X^{n+1}}X^{n+2}\lra \cdots$$
$$\Y_0:\;\;\; \cdots \lra 0\lra N\lraf{v} Y^n\lraf{d_Y^n} Y^{n+1}\lraf{d_Y^{n+1}} Y^{n+2}\lra \cdots$$
where $M\raf{u} X^n$ is the kernel of $X^n\raf{d_X^n} X^{n+1}$ and $N\raf{v} Y^n$ is the kernel of $Y^n\raf{d_Y^n} Y^{n+1}$.
Then
$$t_{\X, \Y}: \Hom_{\Ka}(\X, \Y)\lra \Hom_{\K^b(A\md)}(\X_0, \Y_0),\;\;\; \g\longmapsto (H^n({\cpx{g}}^{\geq n}), {\cpx{g}}^{\geq n})$$
is an isomorphism  \cite[Lemma 2.2(1)]{HX10}, where $H^n({\cpx{g}}^{\geq n}): M=H^n({\X}^{\geq n})\ra H^n({\Y}^{\geq n})=N$ is the homology of ${\cpx{g}}^{\geq n}$.
Moreover, if two morphisms $\g_0, \g_1: X^{\geq n}\ra Y^{\geq n}$ of complexes are homotopic, that is, $\g_0\sim \g_1$, then
$$(H^n(\g_0), \g_0)\sim (H^n(\g_1), \g_1): \X_0\lra \Y_0.$$
Consider the morphism ${\cpx{f}}^{\geq n}: {\X}^{\geq n}\ra {\Y}^{\geq n}$ of complexes. By (2), there is a morphism $\h_0: {\Y}^{\geq n}\ra {\X}^{\geq n}$ in $\Kb{A}$ such that ${\cpx{f}}^{\geq n}\h_0 {\cpx{f}}^{\geq n}={\cpx{f}}^{\geq n}(\phi_{X^{\geq n}}[-1]-1) $ in $\Kb{A}$. Then we have a morphism
$\h_1=(H^n(\h_0), \h_0): \Y_0\ra \X_0$
of complexes. Define $\h:=t^{-1}_{\Y, \X}(\h_1)\in \Hom_{\Ka}(\Y, \X).$
Then
 \begin{equation*}
\begin{aligned}
 t_{\X, \Y}(\f\h\f)&=(H^n({\cpx{f}}^{\geq n}\h_0 {\cpx{f}}^{\geq n}), {\cpx{f}}^{\geq n}\h_0 {\cpx{f}}^{\geq n})\\
 &=(H^n({\cpx{f}}^{\geq n}(\phi_{{\X}^{\geq n}}[-1]-1)), {\cpx{f}}^{\geq n}(\phi_{{\X}^{\geq n}}[-1]-1))=t_{\X, \Y}(\f(\phi_{\X}[-1]-1))
\end{aligned}
 \end{equation*}
in $\K^b(A\md)$, where the last equality follows from the condition $\phi$ being represented above. So $\f\h\f=\f(\phi_{\X}[-1]-1)$ in $\Ka$, and therefore $(a)$ holds.
\end{proof}

{\bf In the rest of this section}, let $k:=\mathbb{Z}/2\mathbb{Z}$ and $A$ a finite-dimensional local commutative Frobenius $k$-algebra. Denote by $\rad(A)$ and $\soc(A)$ the radical and socle of $A$, respectively. Thus $\rad(A)\soc(A)=0$ and $\soc(A)\simeq {}_A(A/\rad(A))$ is a simple $A$-module. Assume that $a\in \rad(A)$ and $0\neq s\in \soc(A)\subseteq \rad(A)$. Thus we always have $(\soc(A))^2=0$.

Let $\{a_1, \cdots, a_n\}$ be a $k$-basis of $A$ with $a_n=s$. Let $\omega$ be the following $k$-linear map
$$\omega: A\lra k, \;\;\; \sum_{1\leq i\leq n}k_ia_i\longmapsto k_n, \;k_i\in k \mbox{ for } 1\le i\le n.$$
Then $\omega$ is a Frobenius form of $A$. We also \textbf{assume} that $a$ satisfies $\omega(x^2)=\omega(a x)$ for all $x\in A$.

\begin{lemma}\label{m}  The equality $\omega(\tr(Z^2))=\omega(\tr(a Z))$ holds for all square matrices $Z$ over $A$.
\end{lemma}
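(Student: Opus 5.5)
Write $Z=(z_{ij})$ as an $m\times m$ matrix over $A$. The plan is to expand both sides entrywise and use that we are in characteristic $2$ and $A$ is commutative. By definition of the trace,
\[
\tr(Z^2)=\sum_{i,j} z_{ij}z_{ji}=\sum_{i} z_{ii}^2+\sum_{i<j}\bigl(z_{ij}z_{ji}+z_{ji}z_{ij}\bigr),
\]
and also $\tr(aZ)=\sum_i a z_{ii}$.

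Since $A$ is commutative, $z_{ij}z_{ji}=z_{ji}z_{ij}$. Because $k=\mathbb{Z}/2\mathbb{Z}$, each off-diagonal pair contributes $2z_{ij}z_{ji}=0$. Hence $\tr(Z^2)=\sum_i z_{ii}^2$. Applying the $k$-linear map $\omega$ gives
\[
\omega(\tr(Z^2))=\sum_i\omega(z_{ii}^2).
\]

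Now invoke the standing assumption $\omega(x^2)=\omega(ax)$ for all $x\in A$, with $x=z_{ii}$. By linearity of $\omega$ this yields
\[
\sum_i\omega(z_{ii}^2)=\sum_i\omega(az_{ii})=\omega\Bigl(\sum_i az_{ii}\Bigr)=\omega(\tr(aZ)).
\]

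There is no real obstacle. The only point needing care is the cancellation of the off-diagonal terms, which requires both commutativity of $A$ and characteristic $2$. The case of the empty matrix ($m=0$) is trivial, since both traces are $0$ by convention.
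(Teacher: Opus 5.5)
Your proposal is correct and follows the paper's proof essentially verbatim: you split $\tr(Z^2)$ into diagonal and off-diagonal terms, cancel the off-diagonal pairs using commutativity of $A$ and $\ch k=2$, and then apply the standing assumption $\omega(x^2)=\omega(ax)$ to each diagonal entry together with the $k$-linearity of $\omega$.
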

\begin{proof}
We calculate $\tr(Z^2)$ as follows:
$$\begin{aligned}
  \tr(Z^2) & =\sum_i(Z^2)_{i, i}=\sum_i\sum_jZ_{i, j}Z_{j, i}
  = \sum_{i<j} Z_{i, j}Z_{j, i} +\sum_{i>j} Z_{i, j}Z_{j, i}+\sum_i Z_{i, i}^2\\
  & = 2\sum_{i<j} Z_{i, j}Z_{j, i} +\sum_i Z_{i, i}^2 = \sum_{i} Z_{i,i}^2.
\end{aligned}$$
Hence
$\omega(\tr(Z^2))=\sum_i \omega(Z_{i, i}^2)=\sum_i \omega(a Z_{i, i})=\omega(\tr(a Z)).$
\end{proof}

\medskip
Now we define a natural transformation $\xi(a): \id[1]\ra [1]\id$ on $\Ka$ as follows.
For a complex $\Y$ in $\Ka$, we define $\xi(a)_{\Y}: \Y[1]\ra \Y[1]$ by
$$\xi(a)_{\Y}^i: Y^{i+1}\lra Y^{i+1},\;\; y\mapsto (1+a)y.$$
Clearly, $\xi(a)_{\Y}: \Y[1]\ra \Y[1]$ is an isomorphism in $\Ka$.

\begin{lemma}\label{k}
For each morphism $\f: \X\ra \Y$ in $\Kb{A}$, there is a morphism $\g: \Y\ra \X$ in $\Kb{A}$ such that $\f\g\f= \f(\xi(a)_{\X}[-1]-1)$ in $\Kb{A}:$
\[\begin{array}{c}\xymatrix{
\X\ar[r]^{\f}\ar[d]_{\xi(a)_{\X}[-1]-1}&\Y\ar@{..>}[dl]_{\g} \\
\X\ar[r]^{\f}&\Y.
}\end{array}\]
\end{lemma}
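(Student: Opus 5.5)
The plan is to turn the required identity into a membership statement in a finite-dimensional $k$-space and then verify that membership by duality, using the form of Lemma \ref{form} and the trace identity of Lemma \ref{m}.

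\emph{Reformulation.} Since $\xi(a)_{\X}[-1]$ is multiplication by $1+a$ in every degree, $\xi(a)_{\X}[-1]-1$ is multiplication by $a$. So the claim is that $a\f\in \f\,\Hom_{\Kb{A}}(\Y,\X)\,\f$. Work in $\Hom^0(\X,\Y)$ and consider the $k$-subspace
$$W:=\{\f\g\f \mid \g\in Z^0(\Hom^{\bullet}(\Y,\X))\}+B^0(\Hom^{\bullet}(\X,\Y)).$$
The element $a\f$ is a cocycle, and the statement is equivalent to $a\f\in W$. Use the non-degenerate pairing $\lan-,-\ran:\Hom^0(\X,\Y)\times\Hom^0(\Y,\X)\ra k$ from Lemma \ref{form} with $n=0$. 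By finite-dimensionality, $W={}^{\perp}(W^{\perp})$. So it suffices to show that $\lan a\f,\cpx{z}\ran=0$ for every $\cpx{z}\in W^{\perp}$.

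\emph{Identifying $W^{\perp}$.} Let $\cpx{z}\in W^{\perp}$.
\begin{itemize}
\item Orthogonality to $B^0(\Hom^{\bullet}(\X,\Y))$ gives $\cpx{z}\in Z^0(\Hom^{\bullet}(\Y,\X))$, by Lemma \ref{form}(3).
\item Orthogonality to all $\f\g\f$ with $\g$ a cocycle uses cyclicity of the trace ($A$ is commutative), degree by degree. This gives $\lan \f\g\f,\cpx{z}\ran=\lan \g,\f\cpx{z}\f\ran$, up to the harmless sign from Lemma \ref{form}(1).
\item Hence $\f\cpx{z}\f$, which is a cocycle in $\Hom^0(\X,\Y)$, is orthogonal to all of $Z^0(\Hom^{\bullet}(\Y,\X))$. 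By Lemma \ref{form}(3), $\f\cpx{z}\f\in B^0(\Hom^{\bullet}(\X,\Y))$, i.e.\ $\f\cpx{z}\f\sim 0$.
\end{itemize}

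\emph{The key computation.} Since $k=\mathbb{Z}/2\mathbb{Z}$, all the signs $(-1)^p$ disappear. Applying Lemma \ref{m} to the square matrices $Z=f^pz^p$ gives
$$\lan a\f,\cpx{z}\ran=\sum_p\omega(\tr(a f^pz^p))=\sum_p\omega(\tr(f^pz^pf^pz^p))=\lan \f\cpx{z}\f,\cpx{z}\ran .$$
The right-hand side vanishes because $\f\cpx{z}\f\in B^0(\Hom^{\bullet}(\X,\Y))$ and $\cpx{z}\in Z^0(\Hom^{\bullet}(\Y,\X))=B^0(\Hom^{\bullet}(\X,\Y))^{\perp}$.

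Therefore $a\f\in W$. This yields a cocycle $\g$ with $\f\g\f-a\f\sim 0$, which is the claim. The main obstacle is choosing the right subspace $W$: it must contain the coboundaries so that passing to homotopy classes is handled by the duality. After that, one only has to keep track of the index conventions in $\lan-,-\ran$ when moving $\f$ across the pairing; these cause no trouble in characteristic $2$.
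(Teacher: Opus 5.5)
Your proposal is correct and follows the paper's proof essentially step for step. It uses the same subspace $\f Z^0(\Hom^{\bullet}(\Y,\X))\f+B^0(\Hom^{\bullet}(\X,\Y))$, the same description of its orthogonal via Lemma \ref{form}(3) and cyclicity of the trace, and the same application of Lemma \ref{m} to $f^pz^p$ to get $\lan a\f,\cpx{z}\ran=\lan \f\cpx{z}\f,\cpx{z}\ran=0$ before taking the double orthogonal.
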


\begin{proof}
Let $\f: \X\ra \Y$ be a morphism in $\Kb{A}$. We may assume that $\f$ is a representative of the corresponding homotopy class of $\f$. Thus $\f$ is a morphism of complexes. We need to prove the following property:

\medskip
$(P)$ There is a morphism $\g: \Y\ra \X$ of complexes such that $\f\g\f\sim \f(\xi(a)_{\X}[-1]-1)$.

\medskip
\noindent
By definition, we have $\f(\xi(a)_{\X}[-1]-1)=a\f$. So property (P) is equivalent to saying that
$$(\dag)\quad a \f\in \{\f\g\f+d_D^{-1}\h\,|\,\g\in Z^0(\cpx{C}), \h\in D^{-1}\}=\f Z^0(\cpx{C})\f+B^0(\cpx{D})$$
with $\cpx{C}:=\Hom^{\bullet}(\Y, \X)$ and $\cpx{D}:=\Hom^{\bullet}(\X, \Y)$.

To prove $(\dag)$, we shall use the non-degenerate bilinear forms $\lan -, -\ran: C^0\times D^0\ra k$ and $\lan -, -\ran: D^0\times C^0\ra k$ in Lemma \ref{form}. Indeed, for any $\g, \cpx{z}\in C^0$, we have
$$\lan \f\g\f, \cpx{z}\ran =\sum_{i\in \mathbb{Z}}(-1)^i\omega\big(\tr(f^ig^if^iz^i)\big)=\sum_{i\in \mathbb{Z}}(-1)^i\omega\big(\tr(g^if^iz^if^i)\big)=\lan \g, \f\cpx{z}\f\ran.$$
Lemma \ref{form}(3) states that $Z^0(\cpx{C})^{\perp}=B^0(\cpx{D})$. Letting $\cpx{g}$ range over $Z^0(\cpx{C})$, we obtain
$$\f\cpx{z}\f\in B^0(\cpx{D})\iff \cpx{z}\in  (\f Z^0(\cpx{C})\f)^{\perp}.$$
We also have $B^0(\cpx{D})^{\perp}=Z^0(\cpx{C})$ by Lemma \ref{form}(3). Then it follows from
\[\big(\f Z^0(\cpx{C})\f+B^0(\cpx{D})\big)^{\perp}=\big(\f Z^0(\cpx{C})\f\big)^{\perp}\cap \big(B^0(\cpx{D})\big)^{\perp}\]
that $\cpx{w}$ in $C^0$ belongs to $\big(\f Z^0(\cpx{C})\f+B^0(\cpx{D})\big)^{\perp}$ if and only if both $\cpx{w}\in Z^0(\cpx{C})$ and $\f\cpx{w}\f\in B^0(\cpx{D})$ hold true. Now for such a $\cpx{w}$, one has
$$\lan a \f, \cpx{w}\ran =\sum_{i\in \mathbb{Z}}(-1)^i\omega(\tr(a f^iw^i))=\sum_{i\in \mathbb{Z}}(-1)^i\omega(\tr(f^iw^if^iw^i))=\lan \f\cpx{w}\f, \cpx{w}\ran=0,$$
where the second equality follows from Lemma \ref{m}, and the last one follows from Lemma \ref{form}(3). This implies that
$a\f\in \big(\f Z^0(\cpx{C})\f+B^0(\cpx{D})\big)^{\perp\perp}=\f Z^0(\cpx{C})\f+B^0(\cpx{D}).$
Thus $\f$ has property $(P)$, and this finishes the proof.
\end{proof}

\begin{theorem}\label{3.1}
If $a^2\neq 0$, then
 $(\id, \xi(a)): \Ka\ra \Ka$
  is a non-standard derived equivalence.
\end{theorem}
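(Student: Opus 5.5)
We must show two things: that $(\id,\xi(a))$ is a triangle functor, and that it is not standard. It is clearly an equivalence, since the underlying functor is the identity.

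\emph{Triangle functor.} We apply Proposition \ref{p}. First, $\xi(a)$ is natural: for a chain map $\f$ we have $(1+a)\f=\f(1+a)$ because $A$ is commutative. It is an isomorphism because $a\in\rad(A)$ makes $1+a$ invertible. It is represented above, since the representative of $\xi(a)_{\X}[-1]$ is multiplication by $1+a$ in every degree, and its truncation ${\X}^{\geq n}$ is again multiplication by $1+a$, that is, $\xi(a)_{{\X}^{\geq n}}[-1]$. Condition (2) of Proposition \ref{p} is exactly Lemma \ref{k}. Hence $(\id,\xi(a))$ is a triangle functor on $\Ka$, and it corresponds via the standard equivalence $\Ka\simeq\Db{A}$ to a derived auto-equivalence.

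\emph{Non-standardness.} The functor $\id$ fixes $A\proj$. If $(\id,\xi(a))$ were standard, Lemma \ref{idd} would give a natural isomorphism of triangle functors $\mu:(\id,\xi(a))\ra(\id,\id)$. Such a $\mu$ is a natural automorphism of the identity functor on $\Ka$ with $\mu_{\X}[1]\circ\xi(a)_{\X}=\mu_{\X[1]}$ for all $\X$. The plan is to show this forces $a^2=0$.

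The key step, and the main obstacle, is to determine the natural automorphisms of $\id_{\Ka}$. On the stalk complex $A$ in degree $0$, $\mu_A$ is multiplication by some unit $u\in A$. Naturality on morphisms $A[i]\ra A[i]$ forces $\mu_{A[i]}$ to be multiplication by some unit $u_i$, and the compatibility condition gives $u_{i+1}=u_i(1+a)$, so $u_i=u(1+a)^i$. Next, test naturality on a morphism between different shifts. Take the two-term complex $\X:A\raf{s}A$ in degrees $0,1$. This complex has canonical morphisms $A[-1]\ra\X$ and $\X\ra A$. The element $\mu_{\X}$ must be compatible with both, which relates multiplication by $u_1$ and by $u_0$ through $\X$. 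The simplest case would be $\X:A\raf{x}A$ with $x$ suitable, in which case one expects to obtain $x(u_1-u_0)=0$ in a quotient, that is, $xua$ is a boundary.

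I would first try $x=a$. The complex $A\raf{a}A$ admits the morphism $A[-1]\ra \X$ given by the identity in degree $1$, and a nonzero morphism to $A$ given by $a$ in degree $0$; composing yields $a\cdot$. Comparing $\mu$ through the factorization, I expect to show that $a\cdot a u\neq0$ in the relevant Hom space $\Hom(A[-1],A[-1])$ modulo homotopy. This should give the contradiction $a^2 u=0$, hence $a^2=0$. If this choice of test complex fails, the fallback is to use a morphism $A[-1]\ra A$ in $\Ka$ factoring through a cone, or to use a bounded-above complex $\cdots\ra A\raf{a}A\raf{a}A$ only if $a^2=0$, adjusting the test complex accordingly. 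The goal throughout is to find a composite whose homotopy class detects the scalar $a^2\neq 0$.
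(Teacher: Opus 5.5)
Your first half matches the paper: naturality and invertibility of $\xi(a)$, ``represented above'', then Proposition~\ref{p} together with Lemma~\ref{k}. Your reduction via Lemma~\ref{idd} to a natural isomorphism $\mu:(\id,\xi(a))\to(\id,\id)$ with $\mu_A=\rho(u)$ and $\mu_{A[1]}=\rho(u(1+a))[1]$ also matches.

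\textbf{The gap.} The contradiction is never derived, and the test you propose cannot produce one.
\begin{enumerate}
\item The composite $A[-1]\to X^{\bullet}\to A$ you describe is zero, because the two maps are concentrated in degrees $1$ and $0$. In fact $\Hom_{\Ka}(A[-1],A)\cong\Hom_{\Ka}(A,A[1])=0$, so no morphism is available through which $u_1$ and $u_0$ could be compared. For the same reason your first fallback (a morphism $A[-1]\to A$ factoring through a cone) gives nothing. Your second fallback, $\cdots\to A\to A\to A$ with differential $a$, is not a complex since $a^2\neq0$.
\item More fundamentally, the constraints a single cone imposes through its canonical maps are always satisfiable. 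For $C=\cone(\rho(w))$, naturality of $\mu$ along $\theta(\rho(w))$ and $\pi(\rho(w))$ only asks for a filler of $(\divideontimes)$ with $r^{\bullet}=\rho(u)$ and $s^{\bullet}=\rho(u(1+a))$. By Lemma~\ref{st}(3) such a filler exists iff $wua\in w^2A$, and this is exactly what Lemma~\ref{k} guarantees.
\item Concretely, for $C(a)=\cone(\rho(a))$ you only learn $\mu_{C(a)}=(\rho(u),\rho(u+at))$ with $ata=0$. This is satisfied, e.g., by $(\rho(u),\rho(u))$, because $\rho(ua)[1]\circ\pi(\rho(a))$ is null-homotopic. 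So the relation you hope to violate with one test complex (your ``$xua$ is a boundary'', precisely $wua\in w^2A$ with $w=x$) always holds.
\end{enumerate}

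\textbf{The missing idea.} Compare two \emph{different} cones through a morphism between them, exploiting that the ambiguity in $\mu$ differs from cone to cone.
\begin{enumerate}
\item On $C(s)=\cone(\rho(s))$, the chain map $(\rho(u(1+a)),\rho(u))$ is a filler. Lemma~\ref{st}(4) then forces $\mu_{C(s)}=(\rho(u(1+a)),\rho(u+sz))$.
\item On $C(a)$ the discrepancy $ua$ is absorbed by homotopy, so $\mu_{C(a)}=(\rho(u),\rho(u+at))$ with $ata=0$. Since $a^2\neq0$, this gives $t\in\rad(A)$.
\item Pick $y$ with $ya=s$. As $a\notin\soc(A)$ (again because $a^2\neq0$), we have $y\in\rad(A)$.
\item Then $\phi=(\rho(1),\rho(y)):C(a)\to C(s)$ is a chain map. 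Using $szy=0=yat$, one computes $\mu_{C(s)}\phi-\phi\mu_{C(a)}=(\rho(ua),0)$, so naturality would make $(\rho(ua),0)$ null-homotopic.
\item A null-homotopy $\rho(v)$ needs $sv=0$, so $v\in\rad(A)$, and $(u-v)a=0$ with $u-v$ a unit. This forces $a=0$, a contradiction.
\end{enumerate}
This is where $a^2\neq0$ enters essentially, and nothing in your sketch plays that role.
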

\begin{proof}
For simplicity, we write $\xi$ for $\xi(a)$ and $F:=(\id, \xi): \Ka\ra \Ka$. For $z\in A$, let $\rho(z)$ be the right multiplication map:
 $$\rho(z): A\lra A, x\longmapsto xz \mbox{ for } x\in A.$$

Clearly, $\xi: \id[1]\ra [1]\id$ on $\Ka$ is represented above. By Proposition \ref{p} and Lemma \ref{k}, $F$ is a derived equivalence.

Suppose that $F$ is a standard derived equivalence. By Lemma \ref{idd}, $F$ is naturally isomorphic to  $(\id, \id)$ as triangle functors. Let $\mu: F=(\id,\xi)\ra (\id, \id)$ be a natural isomorphism of triangle functors. We have the commutative diagram in $\Ka$:
  \[\begin{array}{c}\xymatrix@R=1.5em{
F(A[1])=A[1]\ar[r]^{\xi_{A}}\ar[d]_{\mu_{A[1]}} &F(A)[1]=A[1]\ar[d]^{\mu_{A}[1]}\\
\id(A[1])=A[1]\ar[r]^{\id}&\id(A)[1]=A[1]
}\end{array}\]
and $\mu_{A[1]}=\mu_A[1]\circ \xi_{A}$. We may assume $\mu_A:=\rho(u)$ for a unit $u\in A$. Then $\mu_{A[1]}=\rho(u(1+a))[1]$.

Let $C(a):=\cone(\rho(a))$ and $C(s):=\cone(\rho(s))$. Since $\mu$ is a natural isomorphism, we have the commutative diagram in $\Ka$:
\[\xymatrix@R=1.5em{
\Delta(\rho(a)): &A\ar[r]^{\rho(a)}&A\ar[d]^{\mu_{A}}\ar[r]^-{\theta(\rho(a))} &C(a)\ar[d]^{\mu_{C(a)}}\ar[r]^{\pi(\rho(a))}&A[1]\ar[d]^{\mu_{A[1]}} \\
\Delta(\rho(a)): &A\ar[r]^{\rho(a)}&A\ar[r]^-{\theta(\rho(a))} &C(a)\ar[r]^{\pi(\rho(a))}&A[1].}\]
Replacing $\mu_{C(a)}: C(a)\ra C(a)$ with the morphism $(\rho(u), \rho(u)): C(a)\ra C(a)$ of complexes, the diagram is still commutative in $\Ka$. By Lemma \ref{st}(4), $\mu_{C(a)}=(\rho(u), \rho(u+a x))$ for some $x\in A$ with $a xa=0$. Since $A$ is a local algebra, it follows from $a^2\neq 0$ that $x\in \rad(A)$. Similarly, consider the following commutative diagram in $\Ka$:
\[\xymatrix@R=1.5em{
\Delta(\rho(s)): &A\ar[r]^{\rho(s)}&A\ar[d]^{\mu_{A}}\ar[r]^-{\theta(\rho(s))} &C(s)\ar[d]^{\mu_{C(s)}}\ar[r]^{\pi(\rho(s))}&A[1]\ar[d]^{\mu_{A[1]}} \\
\Delta(\rho(s)): &A\ar[r]^{\rho(s)}&A\ar[r]^-{\theta(\rho(s))} &C(s)\ar[r]^{\pi(\rho(s))}&A[1].}\]
Replacing $\mu_{C(s)}: C(s)\ra C(s)$ with the morphism $(\rho(u(1+a)), \rho(u)): C(s)\ra C(s)$ of complexes, the diagram is still commutative in $\Ka$. By Lemma \ref{st}(4), $\mu_{C(s)}=(\rho(u(1+a)), \rho(u+s z))$ with $z\in A$.

Since $a^2\neq 0$ and $Aa\ne 0$, the socle of $Aa$ is $\soc(_AA)$, and therefore there is $y\in A$ such that $y a=s$. Since $a^2\neq 0$ and $(\soc(A))^2=0$, we know $a\notin\soc(A)$ and $y\in \rad(A)$. Consider the morphism
 $$\phi:=(\rho(1), \rho(y)): C(a)\lra C(s)$$
in $\Ka$. As $\mu$ is a natural isomorphism, we have the commutative diagram in $\Ka$:
  \[\xymatrix@R=1.5em{
 (\ddag)\quad & C(a)\ar[rr]^{\phi}\ar[d]_{\mu_{C(a)}}&&C(s)\ar[d]^{\mu_{C(s)}}\\
 & C(a)\ar[rr]^{\phi} &&C(s).}\]
We shall prove that this diagram is actually not commutative, and hence a contradiction appears. Indeed, since both $y$ and $x$ are in $\rad(A)$, we get $s z y=z s y=0$ and $y a x=s x=0$. Then
$$\mu_{C(s)}\phi-\phi\mu_{C(a)}=(\rho(u(1+a)), \rho(u y+s z y))-(\rho(u), \rho(y u+y a x))=(\rho(ua), 0)$$ in $\Ka$.
Suppose that the morphism $$(\rho(u a), 0): C(a)\lra C(s)$$of complexes
is zero-homotopic. Then there is a morphism $\rho(v): A\ra A$ with $v\in A$ such that $\rho(s)\rho(v)=0$ and $\rho(v)\rho(a)=\rho(ua)$. In other words, $sv=0$ and $(u-v)a=0$ in $A$. Since $A$ is a local algebra, every element in $A\setminus\rad(A)$ is a unit. It follows from $sv=0$ and $s\ne 0$ that $v\in \rad(A)$. As $u$ is a unit in $A$, the element $u-v$ is a unit in $A$, and therefore $a=0$. This contradicts the assumption $a^2\neq 0$. Consequently, the diagram $(\ddag)$ is not commutative in $\Ka$. This contradiction shows that $F=(\id, \xi)$ is not standard.
\end{proof}

{\it Remark.}  (1) The restricted triangle equivalence $(\id, \xi(a)): \Kb{A}\ra \Kb{A}$ is also non-standard. Indeed, the proof of Theorem \ref{3.1} shows that the obstruction to a standard equivalence already appears in $\Kb{A}$.

(2) Let us explain why we take $k=\mathbb{Z}/2\mathbb{Z}$. The key point is the  condition that  $\omega(x^2)=\omega(ax)$ holds for all $x\in A$. This happens only if $k=\mathbb{Z}/2\mathbb{Z}$. Actually, if we choose $x\in A$ such that $\omega(x^2)=\omega(ax)\neq 0$, then, for each $\lambda\in k$, we get $\lambda^2\omega(x^2)=\omega((\lambda x)^2)=\omega(a\lambda x)=\lambda\omega(ax)$. It follows that $\lambda^2=\lambda$ for all $\lambda\in k$. This can happen only in $\mathbb{Z}/2\mathbb{Z}$.

(3) Since we take $k=\mathbb{Z}/2\mathbb{Z}$, both sides of  the equality $\omega(x^2)=\omega(a x)$ are $k$-linear in $x$. To verify whether $\omega(x^2)=\omega(a x)$ for all $x\in A$, it suffices to check the equality for $x$ running over a $k$-basis of $A$.

\medskip
Let $A, B$ be  finite-dimensional, non-simple, local commutative Frobenius $k$-algebras with the Frobenius forms $\omega_A$ and $\omega_B$, respectively. Assume that $k$ is a splitting field for both $A$ and $B$, that is, $A/\rad(A)\simeq k\simeq B/\rad(B)$ as $k$-algebras.  Then $\soc(A)\subseteq \rad(A)$, $\soc(B)\subseteq \rad(B)$, and the tensor product algebra $A\otimes_k B$ is still a finite-dimensional local commutative Frobenius $k$-algebra with the Frobenius form defined by
$$\omega_A\otimes \omega_B: A\otimes_k B\lra k,\;\; \sum_ix_i\otimes y_i\longmapsto \sum_i\omega_A(x_i)\omega_B(y_i).$$
Since $k$ is a splitting field for $A$ and $B$, we have $\soc(A\otimes_kB)=\soc(A)\otimes_k\soc(B)$ and $\rad(A\otimes_kB)=\rad(A)\otimes_kB+A\otimes_k\rad(B)$.

\begin{prop} \label{prop4.6} Let $A,B$ be as above, and let $a\in \rad(A)$ and $b\in \rad(B)$ such that $a^2\neq 0$ and $\omega_A(x^2)=\omega_A(ax)$ for all $x\in A$; $b^2\neq 0$ and $\omega_B(y^2)=\omega_B(b y)$ for all $y\in B$. Let $\Lambda:= A\otimes_k B$ be the tensor product of $A$ and $B$ over $k$.
Then
$$(\id, \xi(a\otimes b)): \KL\lra \KL$$
 is a non-standard derived equivalence.
\end{prop}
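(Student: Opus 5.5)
The plan is to reduce directly to Theorem \ref{3.1} applied to the algebra $\Lambda$ with the element $c:=a\otimes b$. Since $\Lambda$ is a finite-dimensional local commutative Frobenius $k$-algebra with Frobenius form $\omega:=\omega_A\otimes\omega_B$, and $c\in\rad(A)\otimes_k B\subseteq\rad(\Lambda)$, it suffices to verify two things: first, $\omega(z^2)=\omega(cz)$ for all $z\in\Lambda$; second, $c^2\neq 0$. The whole standing setup of the section (Lemma \ref{m}, Lemma \ref{k}, Proposition \ref{p}) then applies verbatim to $\Lambda$. One small point to check is that the proof of Theorem \ref{3.1} only uses a nonzero socle element $s$ of $\Lambda$ together with the displayed Frobenius form. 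It does not use the particular basis description of $\omega$; only nondegeneracy is needed in Lemma \ref{form}. I would remark that $\omega_A\otimes\omega_B$ is nonzero on $\soc(\Lambda)=\soc(A)\otimes\soc(B)$, so it is a legitimate Frobenius form.

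For the second condition: $c^2=a^2\otimes b^2$, and a tensor product over a field of nonzero elements is nonzero. So $c^2\neq0$.

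For the first condition I would use Remark (3). Over $k=\mathbb{Z}/2\mathbb{Z}$, the map $z\mapsto z^2$ is additive in the commutative ring $\Lambda$, since $(z+z')^2=z^2+z'^2+2zz'$. Hence both sides $\omega(z^2)$ and $\omega(cz)$ are $k$-linear in $z$, and it is enough to check pure tensors $z=x\otimes y$. For these,
\[
\omega\big((x\otimes y)^2\big)=\omega_A(x^2)\,\omega_B(y^2)=\omega_A(ax)\,\omega_B(by)=\omega\big((a\otimes b)(x\otimes y)\big).
\]
Here the middle equality uses both hypotheses.

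With both conditions in hand, Theorem \ref{3.1} gives that $(\id,\xi(a\otimes b))$ is a non-standard derived equivalence of $\KL$. The only step I expect to need care is confirming that every ingredient used in Theorem \ref{3.1} transfers to $\Lambda$. That theorem uses a $k$-basis ending in the socle element only to produce a Frobenius form. It also requires $\Lambda$ to be local with $\Lambda/\rad\Lambda\simeq k$, which follows because $k$ splits $A$ and $B$. I would check these briefly; the rest is formal.
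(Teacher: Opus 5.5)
Your proposal is correct and follows essentially the same route as the paper: you show $(a\otimes b)^2=a^2\otimes b^2\neq 0$, check $\omega((x\otimes y)^2)=\omega((a\otimes b)(x\otimes y))$ on pure tensors, extend to all $z\in\Lambda$ using that $z\mapsto z^2$ is $k$-linear over $\mathbb{Z}/2\mathbb{Z}$, and then apply Theorem \ref{3.1}. Your extra remark that $\omega_A\otimes\omega_B$ is nonzero on $\soc(A)\otimes_k\soc(B)$, and hence is the coordinate function for a basis ending in a socle element, justifies a point the paper leaves implicit.
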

\begin{proof}
Clearly, $(a\otimes b)^2\neq 0$. Consider elements of $A\otimes_k B$ in the form $x\otimes y$ with $x\in A$ and $y\in B$. For $x\otimes y\in A\otimes_kB$, we have
$$\begin{aligned}
  (\omega_A\otimes \omega_B)((x\otimes y)^2) & =(\omega_A\otimes \omega_B)(x^2\otimes y^2)=\omega_A(x^2)\omega_B(y^2)\\
  & =\omega_A(a x)\omega_B(b y)=(\omega_A\otimes \omega_B)((a\otimes b)(x\otimes y)).
\end{aligned}$$
Since $k=\Z/2\Z$, the map $z\mapsto z^2$ is $k$-linear. It follows that $(\omega_A\otimes \omega_B)(z^2)= (\omega_A\otimes \omega_B)((a\otimes b)z)$ for all $z\in A\otimes_kB$.
Thus Proposition \ref{prop4.6} follows from Theorem \ref{3.1} immediately.
\end{proof}

Roughly speaking, Proposition \ref{prop4.6} shows that once we know a counterexample to Rickard's question, we can construct infinitely many counterexamples by tensor products of algebras.

\medskip
The following is a concrete example satisfying conditions in Theorem \ref{3.1}. Due to $\Db{A}\simeq \Ka$ as triangulated categories,
Theorem \ref{t} becomes a consequence of Theorem \ref{3.1} and the following example.

\begin{exam}\label{counterexa}
Let $m\geq 1$ and $n_i\geq 1$ for all $1\leq i\leq m$.  Let
$$A:=k[x_1,x_2,\ldots,x_m]/(x_1^{2n_1+1},x_2^{2n_2+1},\ldots,x_m^{2n_m+1}).$$
Then $A$ is the tensor product of $A_i:=k[x_i]/(x_i^{2n_i+1})$ for $1\le i\le m$. For each $i$, let $a_i=x_i^{n_i}$ and let $\omega_i:A_i\ra k$ be the linear map extracting the coefficient of $x_i^{2n_i}$. Then, for each $x_i^t$, it is easy to check that
$\omega_i((x_i^t)^2)=\omega_i(x_i^{n_i+t})=\omega_i(x_i^{n_i}x_i^t)$
and $a_i^2=x_i^{2n_i}\neq 0$ in $A_i$. Taking tensor products, we deduce by Proposition \ref{prop4.6} that $(\id,\xi(a_1a_2\cdots a_m)):\Ka\ra\Ka$ is a non-standard derived equivalence.
\end{exam}

Remark that after this paper appeared in arXiv, Dr. Jinbi Zhang from Anhui University modified Example \ref{counterexa} and gave
counterexamples, to Rickard's question, of algebras over fields of other characteristic.

 \smallskip
{\bf Acknowledgements:} The research work was supported partially by the National Natural Science Foundation of China (Grants 12671048, 12501044) and Beijing Natural Science Foundation (No. 1252011). The third named author is supported by the China Postdoctoral Science Foundation (Grant 2025M783067). During a visit of CCXi to Southern University of Science and Technology in August 2026, a preliminary version of the work was completed. Xi is very grateful to Professor Zhicheng Feng for his invitation and warm hospitality. The author Jin Zhang would like to thank the School of Mathematical Sciences of Capital Normal University for friendly hospitality.

When considering the proof of Lemma \ref{k} in the case $A=k[x]/(x^3)$, we asked ChatGPT Pro for help. It suggested a similar condition.


\medskip
\begin{thebibliography}{abcd}
\bibitem{ALP} K. K. Arnesen, R. Laking, D. Pauksztello, Morphisms between indecomposable complexes in the bounded derived category of a gentle algebra. J. Algebra {\bf 467} (2016) 1-46.

\bibitem{BGS} G. Bobi\'{n}ski, C. Gei\ss, A. Skowro\'{n}ski, Classification of discrete derived categories. Cent. Eur. J. Math. {\bf 2} (2004) 19-49.

\bibitem{BC} G. Bobi\'nski, T. Ciborski, Derived equivalences for the derived discrete algebras are standard. Algebr. Represent. Theory {\bf 29} (2026) 331-352.

\bibitem{C} X. W. Chen, A note on standard equivalences. Bull. Lond. Math. Soc. {\bf 48} (2016) 797-801.

\bibitem{C1} X. W. Chen, Representability and autoequivalence groups. Math. Proc. Cambridge Philos. Soc. {\bf 171} (2021) 657-668.

\bibitem{CY} X. W. Chen, Y. Ye, The {$\mathbf{D}$}-standard and {$\mathbf{K}$}-standard categories. Adv. Math. {\bf 333} (2018) 159-193.

\bibitem{CZ} X. W. Chen, C. Zhang, The derived-discrete algebras and standard equivalences. J. Algebra {\bf 525} (2019) 259-283.

\bibitem{H} Happel, D.: \emph{Triangulated categories in the representation theory of finite-dimensional algebras}. London Mathematical Society Lecture Note Series, 119. Cambridge University Press, Cambridge, 1988.

\bibitem{HX10} W. Hu, C. C. Xi, Derived equivalences and stable equivalences of Morita type. I. Nagoya Math. J. {\bf 200} (2010) 107-152.


\bibitem{Krause} H. Krause, Krull--Schmidt categories and projective covers,
Expos. Math. \textbf{33} (2015), no.~4, 535-549.

\bibitem{MY} J. I. Miyachi, A. Yekutieli, Derived Picard groups of finite-dimensional hereditary algebras. Compositio Math. {\bf 129} (2001) 341-368.

\bibitem{N} A. Neeman, Some new axioms for triangulated categories. J. Algebra {\bf 139} (1991) 221-255.

\bibitem{R0} J. Rickard, Morita theory for derived categories. J. Lond. Math. Soc. {\bf 39} (1989) 436-456.

\bibitem{R} J. Rickard, Derived equivalences as derived functors. J. Lond. Math. Soc. {\bf 43} (1991) 37-48.

\bibitem{RZ} R. Rouquier, A. Zimmermann, Picard groups for derived module categories. Proc. Lond. Math. Soc. {\bf 87} (2003) 197-225.

\bibitem{Y}  A. Yekutieli, Dualizing complexes, Morita equivalences and the derived Picard group of a ring. J. London Math. Soc. {\bf 60} (1999) 723-746.
\end{thebibliography}
\end{document}